\newtheorem{lemma}{\textbf{Lemma}}
\newtheorem{corollary}{\textbf{Corollary}}
\newtheorem{proposition}{\textbf{Proposition}}
\newtheorem{theorem}{\textbf{Theorem}}
\theoremstyle{definition}
\newtheorem{definition}{\textbf{Definition}}
\providecommand{\abs}[1]{\left\lvert#1\right\rvert}
\def\CF{{\widehat{\mathscr{P}}}}
\def\One{\mathbbm{1}} 
\def\D{{\mathcal{D}}}
\def\S{{\mathcal{S}}}
\def\R{{\mathcal{R}}}
\def\Lop{\mathrm{L}} 
\def\C{ \mathbb{C}}
\def\Z{ \mathbb{Z}}
\def\N{ \mathbb{N}}
\def\R{ \mathbb{R}}
\def\Rstar{\mathbb{R} \backslash \{0\}}
\def\drm{\mathrm{d}}
\def\Rstar{ \mathbb{R} \backslash \{0\}}
\def\Der{\mathrm{D}}
\def\CFjump{\widehat{\mathcal{P}}_{\mathrm{jump}}}
\def\Pjump{\mathcal{P}_{\mathrm{jump}}}
\begin{document}

%\begin{frontmatter} 

\title{Multidimensional L\'evy White Noises in Weighted Besov Spaces}

\maketitle
\begin{center}
\author{Julien Fageot, Alireza Fallah, and Michael Unser}
 \end{center}

%TODO: introduire l'introduction
%TODO: reprendre toute la bibliography !

\begin{abstract}
In this paper, we study the Besov regularity of $d$-dimensional L\'evy white noises. More precisely, we describe new sample paths properties of a given white noise in terms of weighted Besov spaces. 
In particular, the smoothness and integrability properties of L\'evy white noises are characterized using the Blumenthal-Getoor indices.
Our techniques rely on wavelet methods and generalized moments estimates for L\'evy white noises.
\end{abstract}

%\end{frontmatter}

%%%%%%%%%%%%%%%%%%%%%%%%%%%%%%%%%%%%%%%%%%%%%%%%%%%%%%%%%%%%%%%%
\section{Introduction} \label{sec:intro}
%%%%%%%%%%%%%%%%%%%%%%%%%%%%%%%%%%%%%%%%%%%%%%%%%%%%%%%%%%%%%%%%

 This paper is dedicated to the study of the regularity of a general $d$-dimensional L\'evy white noise in terms of Besov spaces and is a continuation of our previous work \cite{Fageot2015besov}.
A random process is traditionally defined as a collection $(X_t)$ of random variables  indexed by $t \in \R$, with some adequate properties. For instance, L\'evy processes are described as stochastically continuous random processes with independent and stationary increments  \cite{Applebaum2009levy,Sato1994levy}.
However, it is not possible to define L\'evy white noises in the traditional framework. 
Typically, in the 1D setting, it is tempting to introduce L\'evy white noises as the derivatives of L\'evy processes.
The well-known issue is that L\'evy processes are (almost surely) not differentiable in the classical sense: The derivative of a non-trivial L\'evy process does not have a pointwise interpretation.
An alternative way of introducing random processes is based on the abstract theory of measures on function spaces, as developed by Bogachev \cite{Bogachev2007measure} among others. In this context, a random process is a random variable that takes values in a function space endowed with the adequate measurable structure.
In this spirit, Gelfand \cite{Gelfand1955generalized} and It\^o \cite{Ito1954distributions} have independently introduced the concept of generalized random processes, defined as random elements in the  Schwartz space of generalized functions \cite{Schwartz1966distributions}. This approach was more extensively exposed in \cite[Chapter 3]{GelVil4} and \cite{Ito1984foundations}. The $d$-dimensional Schwartz space has the advantage of being stable by (weak) differentiation, and does not only include not only the $d$-dimensional L\'evy white noises, but also all their derivatives.

\paragraph{Measuring   regularity with Besov spaces}
Since we are considering processes that have no pointwise interpretation, we should consider function spaces with negative smoothness.
When talking about the regularity of random processes, the Sobolev or the H\"older regularities are natural concepts that comes into mind.
In order to be more general, we will question the Besov regularity of L\'evy white noises. 
Besov spaces include both Sobolev and H\"older spaces, and provide a finer measure of the regularity of a function \cite{Triebel2008function,Triebel2010theory}.
Evaluating the Besov regularity of random processes over $\R^d$ requires the  introduction of weights, since they are generally not decreasing towards infinity. Thereafter, we therefore consider weighted Besov spaces or local Besov spaces.

\paragraph{Regularity of L\'evy white noises and related processes}
To the best of our knowledge, the Besov regularity of $d$-dimensional L\'evy white noises has never been addressed in all generality.  Kusuoka \cite{Kusuoka1982support} estimated the weighted Sobolev regularity of the Gaussian white noise, while Veraar  \cite{Veraar2010regularity} obtained complete results on the local Besov regularity of the Gaussian white noise. However, these works are based on intrinsic Gaussian methods and are not easily extended  to the non-Gaussian case. 
In  \cite{Fageot2015besov}, we derived new results on the Besov regularity of symmetric-$\alpha$-stable (S$\alpha$S) L\'evy white noises on the $d$-dimensional torus. 
This paper is an extension of \cite{Fageot2015besov} in two ways: (1) we consider L\'evy white noises over $\R^d$ and deduce the local results as corollaries, and (2) we extend the results from S$\alpha$S white noises to general L\'evy white noises.

Other important works on the Besov regularity of $1$-dimensional L\'evy processes shall be mentioned. The pioneer works concern the Brownian motion \cite{Benyi2011modulation,Ciesielski1993quelques,Roynette1993}; see also \cite{Nualart2003besov} for extensions to more general Gaussian processes, including the fractional Brownian motion. 
Stable L\'evy processes are being studied in \cite{Ciesielski1993quelques} and \cite{Rosenbaum2009first}. Note that Rosenbaum   \cite{Rosenbaum2009first} is using wavelet techniques similar to ours. 
The case of general L\'evy processes was extensively studied by Schilling, both in the local \cite{Schilling1997Feller} and weighted cases \cite{Schilling1998growth,Schilling2000function}. Herren obtained similar local results in \cite{Herren1997levy}. In particular, Schilling and Herren rely  on the Blumenthal-Getoor indices introduced in \cite{Blumenthal1961sample} for their generalization to non-stable processes. In addition, they extend their results to more general classes of Markov processes. Those indices also play a crucial role in the present study. For a comprehensive survey on the Besov regularity of L\'evy processes, we refer the reader to \cite{Bottcher2013levy}. 

\paragraph{Contributions and outline}
The paper is organized as follows.
\begin{itemize}
	\item In Section \ref{sec:maths}, we define L\'evy white noises in the framework of generalized random processes. We also introduce weighted Besov spaces, based on the wavelet methods developed in \cite{Triebel2008function}.
	\item We derive estimates of the moments of L\'evy white noises in Section \ref{sec:estimates}. Moment estimates are standard technical results to study  the regularity of L\'evy processes \cite{Deng2015shift}. Our contribution is to adapt this question to L\'evy white noises by considering general test functions.
	\item In Section \ref{sec:measurable}, we show that weighted Besov spaces are measurable for the cylindrical $\sigma$-field on the space $\S'(\R^d)$ of tempered generalized functions. More concretely, it allows us to address the question of the Besov regularity of any generalized random process. 
	\item Sections \ref{sec:sobolev} and \ref{sec:weighted} are dedicated to the weighted Besov regularity of $d$-dimensional L\'evy white noises. Section \ref{sec:sobolev} is restricted to weighted $L_2$-Sobolev spaces, and is therefore an intermediate step  required to apply our wavelet techniques in Section \ref{sec:weighted}. The main result of this paper is Theorem \ref{theo:weighted}. There, we give sufficient conditions for a L\'evy white noise to be almost surely in a given weighted Besov space  in terms of the Blumenthal-Getoor indices of the noise.
	\item In Section \ref{sec:local}, we deduce the local Besov regularity of L\'evy white noises from the above.
	\item Finally, we discuss our results in Section \ref{sec:examples}  and apply them to some specific families of L\'evy white noises that are often encountered in practice.
\end{itemize}

%%%%%%%%%%%%%%%%%%%%%%%%%%%%%%%%%%%%%%%%%%%%%%%%%%%%%%%%%%%%%%%%
\section{Preliminaries} \label{sec:maths}
%%%%%%%%%%%%%%%%%%%%%%%%%%%%%%%%%%%%%%%%%%%%%%%%%%%%%%%%%%%%%%%%

	%%%%%%%%%%%%%%%%%%%%%%%%%%%%%%%%%%%%%%%%%%%%%%%%%%%%%%%%%%%%%%
	\subsection{Generalized Processes and L\'evy White Noises} \label{subsec:GRP}
	%%%%%%%%%%%%%%%%%%%%%%%%%%%%%%%%%%%%%%%%%%%%%%%%%%%%%%%%%%%%%%
		
The stochastic processes of this paper are defined in the framework of generalized random processes \cite[Chapter 3]{GelVil4}. It allows one in particular to consider L\'evy white noises as well-defined random processes, which is not possible in more traditional approaches since they do not admit a pointwise interpretation.

The Schwartz space of infinitely smooth functions and rapidly decaying functions on $\R^d$ is denoted by $\S(\R^d)$.  It is endowed with the topology associated with the  following notion of convergence: A sequence $(\varphi_n)$ of functions in $\S(\R^d)$ converges to $\varphi \in \S(\R^d)$ if, for every multiindex $\bm{\alpha} \in \N^d$ and every $\mu \geq 0$, the functions $\bm{x} \mapsto  \lvert \bm{x} \rvert^{\mu} \Der^{\bm{\alpha}} \{\varphi_n\}(\bm{x})$ converge to $\bm{x} \mapsto  \lvert \bm{x} \rvert^{\mu} \Der^{\bm{\alpha}} \{\varphi\}(\bm{x})$ in $L_2(\R^d)$,
where $\lvert \cdot \rvert$ is the Euclidian norm on $\R^d$.
The space $\S(\R^d)$ is a nuclear Fr\'echet space \cite[Section 51]{Treves1967}.
The topological dual of $\S(\R^d)$ is the  space $\S'(\R^d)$ of tempered generalized functions. A \emph{cylindrical set} of $\S'(\R^d)$ is a subset of the form
\begin{equation} \label{eq:cylinders}
	\Big\{ u \in \S'(\R^d), \quad \left( \langle u,\varphi_1 \rangle, \ldots, \langle u, \varphi_n\rangle \right) \in B \Big\},
\end{equation}
where $n\geq 1$, $\varphi_1, \ldots, \varphi_n \in \S(\R^d)$, and $B$ is a Borel subset of $\R^n$. We denote by $\mathcal{B}_c(\S'(\R^d))$ the \emph{cylindrical $\sigma$-field} of $\S'(\R^d)$, defined as the $\sigma$-field generated by the cylindrical sets.  Then, $(\S'(\R^d), \mathcal{B}_c(\S'(\R^d))$ is a measurable space. We fix the probability space $(\Omega,\mathcal{F},\mathscr{P})$.

\begin{definition}
	A \emph{generalized random process} is a measurable function 
	\begin{equation}
	s \ : \ (\Omega,\mathcal{F}) \rightarrow (\S'(\R^d),\mathcal{B}_c(\S'(\R^d)).
	\end{equation}
	Its \emph{probability law} is the measure on $\S'(\R^d)$, image of $\mathscr{P}$ by $s$. For  every $B\in \mathcal{B}_c(\S'(\R^d))$,
	\begin{equation}
		\mathscr{P}_s (B) = \mathscr{P} ( \{ \omega \in \Omega, s(\omega) \in B \}).
	\end{equation} 
	The \emph{characteristic functional} of $s$ is defined for every $\varphi \in \S(\R^d)$ by
	\begin{equation}
		\CF_s(\varphi) = \int_{\S'(\R^d)} \mathrm{e}^{\mathrm{i} \langle u,\varphi \rangle} \drm \mathscr{P}_s (u).
	\end{equation} 
\end{definition}

A generalized random process is a random element of the space of tempered generalized functions. The characteristic functional is the infinite-dimensional generalization of the characteristic function. It characterizes the law of $s$ in the sense that
\begin{equation}
	\mathscr{P}_{s_1} = \mathscr{P}_{s_2} \Leftrightarrow \CF_{s_1} =  \CF_{s_2},
\end{equation}
which  we denote by $s_1 \overset{(d)}{=} s_2$ (where $(d)$ stands for equality in distribution).
Since the space $\S(\R^d)$ is nuclear, the Minlos-Bochner theorem \cite{GelVil4,minlos1959generalized} gives a complete characterization of admissible characteristic functionals.

\begin{theorem}[Minlos-Bochner theorem]\label{theo:MB}
	A functional $\CF$ on $\S(\R^d)$ is the characteristic functional of a generalized random process $s$ if and only if it is continuous and positive-definite over $\S(\R^d)$ and satisfies $\CF(0) = 1$. 
\end{theorem}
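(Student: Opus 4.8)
The plan is to prove the two implications separately, the necessity direction being elementary and the sufficiency direction being the substantial one (this is Minlos's theorem).

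\textbf{Necessity.} Suppose $\CF = \CF_s$ for some generalized random process $s$. Then $\CF(0) = \int_{\S'(\R^d)} 1 \, \drm\mathscr{P}_s = 1$ because $\mathscr{P}_s$ is a probability measure. For continuity, if $\varphi_n \to \varphi$ in $\S(\R^d)$ then $\langle u,\varphi_n\rangle \to \langle u,\varphi\rangle$ for every $u \in \S'(\R^d)$ by definition of the weak topology on $\S'(\R^d)$, so $\ue^{\ui\langle u,\varphi_n\rangle} \to \ue^{\ui\langle u,\varphi\rangle}$ pointwise; since the integrands are dominated by the constant $1 \in L_1(\mathscr{P}_s)$, dominated convergence gives $\CF(\varphi_n) \to \CF(\varphi)$. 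For positive-definiteness, expanding a modulus square shows that for all $\varphi_1,\dots,\varphi_n \in \S(\R^d)$ and $c_1,\dots,c_n \in \C$,
\[
\sum_{j,k=1}^n c_j\overline{c_k}\,\CF(\varphi_j-\varphi_k) = \int_{\S'(\R^d)} \Big| \sum_{j=1}^n c_j\,\ue^{\ui\langle u,\varphi_j\rangle} \Big|^2 \drm\mathscr{P}_s(u) \ge 0 .
\]

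\textbf{Sufficiency.} Assume $\CF$ is continuous, positive-definite and $\CF(0)=1$. For each finite family $\varphi_1,\dots,\varphi_n \in \S(\R^d)$, the function $\bm\xi \mapsto \CF\big(\sum_{j=1}^n \xi_j\varphi_j\big)$ on $\R^n$ is continuous, positive-definite and equal to $1$ at the origin, so the finite-dimensional Bochner theorem provides a unique probability measure $\mu_{\varphi_1,\dots,\varphi_n}$ on $\R^n$ having it as characteristic function. Because all these measures are built from the single functional $\CF$ evaluated on finite-dimensional subspaces, they are mutually consistent (and the linear relations they encode force any common extension to sit on linear functionals). By Kolmogorov's extension theorem there is a probability measure $\widetilde{\mathscr P}$ on the algebraic dual $\S(\R^d)^{*}$, equipped with its cylindrical $\sigma$-field, whose finite-dimensional marginals are the $\mu_{\varphi_1,\dots,\varphi_n}$; in particular $\int \ue^{\ui\langle u,\varphi\rangle}\,\drm\widetilde{\mathscr P}(u) = \CF(\varphi)$ for all $\varphi$.

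The heart of the matter, and the main obstacle, is to show that $\widetilde{\mathscr P}$ is in fact carried by the \emph{topological} dual $\S'(\R^d)$, and this is exactly where nuclearity of $\S(\R^d)$ is used. Continuity of $\CF$ at $0$ gives, for each $\varepsilon>0$, a continuous Hilbertian seminorm $\|\cdot\|_p$ and a $\delta>0$ with $\|\varphi\|_p \le \delta \Rightarrow |\CF(\varphi)-1|\le\varepsilon$. Since $\S(\R^d)$ is nuclear, one can choose a stronger Hilbertian seminorm $\|\cdot\|_q$ such that the canonical inclusion of the completion $\S_q$ into $\S_p$ is Hilbert--Schmidt. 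Expanding the squared dual norm $\|u\|_{-q}^2$ along an orthonormal basis of $\S_q$ and using that the sum of the $\|\cdot\|_p$-norms of the basis vectors equals the (finite) Hilbert--Schmidt norm of that inclusion, one estimates the $\widetilde{\mathscr P}$-mass of $\{u : \|u\|_{-q} > R\}$ via a smoothing argument combined with Chebyshev's inequality, obtaining a bound that tends to $0$ as $R\to\infty$ and $\varepsilon\to0$. Hence $\widetilde{\mathscr P}$ is concentrated on $\bigcup_q \S_{-q} = \S'(\R^d)$. Restricting $\widetilde{\mathscr P}$ to $\S'(\R^d)$, and observing that $\mathcal{B}_c(\S'(\R^d))$ is the trace on $\S'(\R^d)$ of the cylindrical $\sigma$-field of the algebraic dual, produces a generalized random process $s$ (for instance on the canonical probability space) with $\mathscr{P}_s = \widetilde{\mathscr P}|_{\S'(\R^d)}$ and $\CF_s = \CF$, which completes the proof. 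Everything outside the concentration step is routine bookkeeping with Bochner's and Kolmogorov's theorems; the concentration step genuinely fails without nuclearity (e.g.\ on infinite-dimensional Hilbert spaces, where the identity is not Hilbert--Schmidt).
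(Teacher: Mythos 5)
The paper does not prove this statement: it is quoted as a known result and attributed to Minlos and to Gelfand--Vilenkin, so there is no in-paper argument to compare yours against. Judged on its own terms, your necessity direction is complete and correct (the normalization, the dominated-convergence argument for sequential continuity --- which suffices since $\S(\R^d)$ is a Fr\'echet space --- and the modulus-square identity for positive-definiteness), and your overall strategy for sufficiency is exactly the classical one from the cited references: finite-dimensional Bochner, a consistent projective family, Kolmogorov extension to the algebraic dual, and then concentration on $\S'(\R^d)$ via nuclearity.

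The gap is that the concentration step --- which you correctly identify as ``the heart of the matter'' --- is described but never carried out, and that step \emph{is} Minlos's theorem; everything before it holds for any barrelled locally convex space and cannot by itself distinguish $\S(\R^d)$ from, say, an infinite-dimensional Hilbert space where the conclusion fails. What is missing is the actual quantitative lemma: if $\lvert 1-\CF(\varphi)\rvert\le\varepsilon$ whenever $\lVert\varphi\rVert_p\le\delta$, and $\lVert\cdot\rVert_q$ dominates $\lVert\cdot\rVert_p$ with the canonical map $\S_q\to\S_p$ Hilbert--Schmidt, then integrating $1-\Re\,\CF$ against centered Gaussians of covariance $\sigma^2$ on the finite-dimensional subspaces and using $1-\ue^{-t}\le t$ yields an explicit bound of the form $\widetilde{\mathscr P}\bigl(\lVert u\rVert_{-q}>R\bigr)\le C\bigl(\varepsilon+\delta^{-2}\lVert i_{qp}\rVert_{\mathrm{HS}}^{2}/R^{2}\bigr)$, from which tightness on $\bigcup_q\S_{-q}=\S'(\R^d)$ follows. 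Without writing out that inequality chain (and, as a smaller point, without addressing that the set of \emph{linear} functionals is not itself a cylinder set in $\R^{\S(\R^d)}$, which is why one must phrase Kolmogorov's theorem as a projective limit over finite-dimensional quotients rather than over the index set $\S(\R^d)$), the sufficiency half remains an outline rather than a proof. Also note a slip: it is the sum of the \emph{squared} $\lVert\cdot\rVert_p$-norms of a $q$-orthonormal basis that equals the squared Hilbert--Schmidt norm of the inclusion, not the sum of the norms.
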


L\'evy processes are random processes index by $\R$ with stationary and independent increments. They are deeply related to infinitely divisible random variables \cite{Sato1994levy}. For the same reasons, there is a one-to-one correspondence between infinitely divisible laws and L\'evy white noises. An infinitely divisible random variable $X$ can be decomposed as $X = X_1 + \cdots + X_n$ for every $n \geq 1$. We say that a function $f$ from $\R$ to $\C$ is a \emph{L\'evy exponent} if it is the continuous log-characteristic function of an infinitely divisible random variable. We say moreover that $f$ \emph{satisfies the Schwartz condition} if the moment $\mathbb{E}[|X|^\epsilon]$ of $X$ is finite for some $\epsilon >0$. 

Let $\bm{X} = (X_1, \ldots ,X_n)$ be i.i.d. infinitely divisible random variables with common L\'evy exponent $f$. By independence, the characteristic function of $\bm{X}$ is
\begin{equation} \label{eq:CFvector}
	\Phi_{\bm{X}} (\bm{\xi}) = \exp \left( \sum_{i=1}^n f(\xi_i) \right)
\end{equation}
for every $\bm{\xi} =(\xi_1,\ldots ,\xi_n ) \in \R^n$. The class of L\'evy white noise can be seen as the generalization of this principle in the continuous domain, up to the replacement of the sum in \eqref{eq:CFvector} by an integral.

\begin{definition}
	A \emph{L\'evy white noise} is a generalized random process $w$ with characteristic functional of the form
	\begin{equation} \label{eq:CFnoise}
		\CF_w(\varphi) =\exp\left(  \int_{\R^d} f(\varphi(\bm{x})) \drm \bm{x} \right)
	\end{equation}
	for every $\varphi \in \S(\R^d)$, where $f$ is a L\'evy exponent that satisfies the Schwartz condition.
\end{definition}

Gelfand and Vilenkin have proved that the functional \eqref{eq:CFnoise} is a valid characteristic functional on $\D(\R^d)$, the space of compactly supported and infinitely smooth functions, without the Schwartz condition on $f$ \cite{GelVil4}. The Schwartz condition is sufficient to extend this result to $\S(\R^d)$ \cite[Theorem 3]{Fageot2014}. Recently, Dalang and Humeau have shown that this condition is also necessary: A white noise with L\'evy exponent that does not satisfy the Schwartz condition is almost surely not in $\S'(\R^d)$ \cite[Theorem 3.13]{Dalang2015Levy}.

A L\'evy white noise is stationary, in the sense that $w \overset{(d)}{=} w(\cdot - \bm{x}_0)$ for every $\bm{x}_0 \in \R^d$. It is moreover independent at every point, meaning that $\langle w,\varphi \rangle$ and $\langle w,\psi \rangle$ are independent whenever $\varphi$ and $\psi \in \S(\R^d)$ have disjoint supports.
In $1$-D, we recover the usual notion of white noise, since $w$ is the derivative in the sense of generalized functions of the L\'evy process with the same L\'evy exponent. This principle can be extended in any dimension $d\geq 2$: The $d$-dimensional L\'evy white noise is the weak derivative $\Der_{x_1} \cdots \Der_{x_d} \{s\}$ of the $d$-dimensional L\'evy sheet $s$ \cite{Dalang2015Levy}. 
	
	%%%%%%%%%%%%%%%%%%%%%%%%%%%%%%%%%%%%%%%%%%%%%%%%%%%%%%%%%%%%%%	
	\subsection{Weighted Sobolev and Besov Spaces} \label{subsec:besov}
	%%%%%%%%%%%%%%%%%%%%%%%%%%%%%%%%%%%%%%%%%%%%%%%%%%%%%%%%%%%%%%

Our goal is to characterize the smoothness of L\'evy white noises in terms of weighted Besov spaces. 
All our results related to Besov spaces would require the corresponding intermediate result for Sobolev spaces which we introduce in Section \ref{subsub:sobolev}.

		%%%%%%%%%%%%%%%%%%%%%%%%%%%%%%%%%%%%%%%%%%%%%%%%%%%%%%%%%%	
		\subsubsection{Weighted Sobolev Spaces} \label{subsub:sobolev}
		%%%%%%%%%%%%%%%%%%%%%%%%%%%%%%%%%%%%%%%%%%%%%%%%%%%%%%%%%%	

We set $\langle \bm{x} \rangle = \sqrt{1 + \lvert \bm{x} \rvert^2}$. The Fourier transform of $f \in \S'(\R^d)$ is denoted by $\widehat{f}$. For $\tau \in \R$, we define $\Lop_{\tau}$ (the Bessel operator of order $\tau$) as the pseudo-differential operator with Fourier multiplier $\langle \cdot \rangle^{\tau}$. In Fourier domain, we write
\begin{equation}
	\widehat{\Lop_\tau \{\varphi \}} ( \bm{\omega} ) := \langle \bm{\omega} \rangle^{\tau} \widehat{\varphi} (\bm{\omega})
\end{equation}
for every $\bm{\omega} \in \R^d$ and $\varphi \in \S(\R^d)$. When $\tau > 0$, the operator $\mathrm{I}_\tau = \Lop_{-\tau}$ is called a Bessel potential \cite{Grafakos2004classical}. The operator $\Lop_\tau$ is self-adjoint, linear, and continuous from $\S(\R^d)$ to $\S(\R^d)$,  since its Fourier multiplier is infinitely smooth and bounded by a polynomial function.
It can therefore be extended as a linear and continuous operator from $\S'(\R^d)$ to $\S'(\R^d)$. 

\begin{definition}	
	Let $\tau , \mu \in \R$.
	The \emph{Sobolev space of smoothness $\tau$} is defined by
	\begin{equation}
		W_2^{\tau}(\R^d) := \left\{  f \in \S'(\R^d), \quad  \Lop_\tau \{f\} \in L_2(\R^d)  \right\}
	\end{equation}
	and the \emph{Sobolev space of smoothness $\tau$ and weight $\mu$} is
	\begin{equation}
	W_2^{\tau}(\R^d; \mu) := \left\{  f \in \S'(\R^d),  \quad  \langle \cdot \rangle^{\mu} f  \in W_2^{\tau}(\R^d) \right\}.
	\end{equation}	
	We also set  $L_2(\R^d;\mu) := W_2^{0} (\R^d;\mu)$. 
\end{definition}

We summarize now the basic properties on weighted Sobolev spaces that are useful for our work.

\begin{proposition} \label{property_sobolev}
The following properties hold for weighted Sobolev spaces.
\begin{itemize}
	\item For $\mu , \tau \in \R$, $W_2^{\tau}(\R^d;\mu)$ is a Hilbert space for the scalar product
	\begin{equation}
		\langle f, g \rangle_{W_2^\tau(\R^d; \mu)} := \left\langle \Lop_\tau \{  \langle \cdot \rangle^{\mu}  f \} ,  \Lop_\tau \{  \langle \cdot \rangle^{\mu} g \} \right\rangle_{L_2(\R^d)}.
	\end{equation}
	We denote by $\lVert f \rVert_{W_2^\tau(\R^d;\mu)} = \langle f,f \rangle^{1/2}_{W_2^\tau(\R^d; \mu)}$ the corresponding norm.
	\item For $\mu\in \R$ fixed and for every $\tau_1 \leq \tau_2$, we have the continuous embedding
	\begin{equation} \label{eq:sobembtau} W_2^{\tau_2} (\R^d;\mu) \subseteq W_2^{\tau_1}(\R^d;\mu). \end{equation}
	\item For $\tau \in \R$ fixed and for every $\mu_1 \leq \mu_2$, we have the continuous embedding
	\begin{equation} \label{eq:sobembmu} W_2^{\tau} (\R^d;\mu_2) \subseteq W_2^{\tau}(\R^d;\mu_1). \end{equation}
	\item For $\mu , \tau \in \R$, the operator $\Lop_{\tau,\mu} : f  \mapsto 	\langle \cdot \rangle^{\mu} \Lop_\tau \{f\}$
	is an isometry from  $L_2(\R^d)$ to $W_2^{-\tau} (\R^d;-\mu) $.
	\item The dual space of $W_2^{\tau}(\R^d;\mu)$ is $W_2^{-\tau}(\R^d;-\mu)$ for every $\tau,\mu \in \R$. 
	\item We have the countable projective limit 
	\begin{equation} \label{eq:projective}
	\S(\R^d) = \bigcap_{\tau, \mu \in \R} W_2^{\tau}(\R^d;\mu) = \bigcap_{n \in \mathbb{N}} W_2^{n}(\R^d;n).
	\end{equation}
	\item We have the countable inductive limit 
	\begin{equation}    \label{eq:inductive}
	\S'(\R^d)  = \bigcup_{\tau, \mu \in \R} W_2^{\tau}(\R^d;\mu) = \bigcup_{n \in \mathbb{N}} W_2^{-n}(\R^d;-n).	
	\end{equation}
\end{itemize}
\end{proposition}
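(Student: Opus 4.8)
The plan is to reduce every assertion to the single fact that the Bessel operator $\Lop_\tau$ and the multiplication operator $f\mapsto\langle\cdot\rangle^\mu f$ are isomorphisms of $\S'(\R^d)$, with inverses $\Lop_{-\tau}$ and $f\mapsto\langle\cdot\rangle^{-\mu}f$; the fourth item is the linchpin and is essentially a reformulation of the definitions. Indeed, by definition $f\in W_2^\tau(\R^d;\mu)$ if and only if $\Lop_\tau\{\langle\cdot\rangle^\mu f\}\in L_2(\R^d)$, so the linear map $J_{\tau,\mu}:f\mapsto\Lop_\tau\{\langle\cdot\rangle^\mu f\}$ takes $W_2^\tau(\R^d;\mu)$ into $L_2(\R^d)$; it is injective (a composition of isomorphisms of $\S'(\R^d)$) and surjective ($f=\langle\cdot\rangle^{-\mu}\Lop_{-\tau}\{h\}$ satisfies $J_{\tau,\mu}\{f\}=h$ for any $h\in L_2(\R^d)$), with inverse $h\mapsto\langle\cdot\rangle^{-\mu}\Lop_{-\tau}\{h\}$. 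Replacing $(\tau,\mu)$ by $(-\tau,-\mu)$ shows that $\Lop_{\tau,\mu}:f\mapsto\langle\cdot\rangle^\mu\Lop_\tau\{f\}$ is a bijection from $L_2(\R^d)$ onto $W_2^{-\tau}(\R^d;-\mu)$, and $\|\Lop_{\tau,\mu}\{f\}\|_{W_2^{-\tau}(\R^d;-\mu)}=\|\Lop_{-\tau}\{\langle\cdot\rangle^{-\mu}\Lop_{\tau,\mu}\{f\}\}\|_{L_2}=\|\Lop_{-\tau}\{\Lop_\tau\{f\}\}\|_{L_2}=\|f\|_{L_2}$, which is the fourth item. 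In particular $J_{\tau,\mu}$ is an isometric isomorphism onto the Hilbert space $L_2(\R^d)$, so $W_2^\tau(\R^d;\mu)$ is a Hilbert space for the pulled-back inner product $(f,g)\mapsto\langle J_{\tau,\mu}\{f\},J_{\tau,\mu}\{g\}\rangle_{L_2(\R^d)}$, which is exactly the displayed one; this is the first item.

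\emph{Embeddings.} For the second item, fix $\mu$ and $\tau_1\le\tau_2$; with $g=\langle\cdot\rangle^\mu f$, Plancherel gives $\|\Lop_{\tau_1}\{g\}\|_{L_2}^2=\int_{\R^d}\langle\bm\omega\rangle^{2\tau_1}\lvert\widehat g(\bm\omega)\rvert^2\drm\bm\omega\le\int_{\R^d}\langle\bm\omega\rangle^{2\tau_2}\lvert\widehat g(\bm\omega)\rvert^2\drm\bm\omega=\|\Lop_{\tau_2}\{g\}\|_{L_2}^2$, since $\langle\bm\omega\rangle\ge 1$ and $\tau_1-\tau_2\le 0$; hence the embedding, with norm at most $1$. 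For the third item, fix $\tau$ and $\mu_1\le\mu_2$ and set $\nu=\mu_2-\mu_1\ge 0$; transporting the inclusion $W_2^\tau(\R^d;\mu_2)\subseteq W_2^\tau(\R^d;\mu_1)$ through the isometries of the fourth item, it is equivalent to the $L_2(\R^d)$-boundedness of the operator $\Lop_\tau\circ M_{\langle\cdot\rangle^{-\nu}}\circ\Lop_{-\tau}$, where $M_{\langle\cdot\rangle^{-\nu}}$ is multiplication by $\langle\cdot\rangle^{-\nu}$. Since $\nu\ge 0$, the function $\langle\cdot\rangle^{-\nu}$ and all its derivatives are bounded, so this operator has a symbol in $S^0_{1,0}$ and is $L_2$-bounded by the standard symbolic calculus \cite{Grafakos2004classical}; alternatively, one gets the $W_2^\tau(\R^d)$-boundedness of $M_{\langle\cdot\rangle^{-\nu}}$ directly from the Leibniz rule when $\tau$ is an even nonnegative integer (then $\Lop_\tau=(\mathrm{Id}-\Delta)^{\tau/2}$ is a differential operator), and extends it to all $\tau\in\R$ by complex interpolation and duality (the fifth item). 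This single step, being the only one that is not a pure unwinding of definitions, is the main obstacle.

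\emph{Duality.} Fix $\tau,\mu$. By the fourth item, $h\mapsto\langle\cdot\rangle^{-\mu}\Lop_{-\tau}\{h\}$ and $h\mapsto\langle\cdot\rangle^\mu\Lop_\tau\{h\}$ are isometric isomorphisms of $L_2(\R^d)$ onto $W_2^\tau(\R^d;\mu)$ and onto $W_2^{-\tau}(\R^d;-\mu)$ respectively. For $f=\langle\cdot\rangle^{-\mu}\Lop_{-\tau}\{k\}$ and $g=\langle\cdot\rangle^\mu\Lop_\tau\{h\}$ with $k,h\in L_2(\R^d)$, the weights cancel and the self-adjointness of $\Lop_{\pm\tau}$ (valid on $\S(\R^d)$ and extended by density) give $\langle f,g\rangle=\langle k,h\rangle_{L_2}$, where $\langle\cdot,\cdot\rangle$ denotes the continuous extension of the $L_2(\R^d)$ pairing, which coincides with the $\S'(\R^d)$--$\S(\R^d)$ duality on $\S(\R^d)$; in particular this bilinear form is well defined and bounded on $W_2^\tau(\R^d;\mu)\times W_2^{-\tau}(\R^d;-\mu)$ and separates points. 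Precomposing a continuous functional on $W_2^\tau(\R^d;\mu)$ with $h\mapsto\langle\cdot\rangle^{-\mu}\Lop_{-\tau}\{h\}$ yields a continuous functional on $L_2(\R^d)$, represented by Riesz as $k\mapsto\langle k,h\rangle_{L_2}$ for a unique $h\in L_2(\R^d)$, and then $g=\langle\cdot\rangle^\mu\Lop_\tau\{h\}\in W_2^{-\tau}(\R^d;-\mu)$ represents the original functional; this correspondence is isometric, so $(W_2^\tau(\R^d;\mu))'=W_2^{-\tau}(\R^d;-\mu)$.

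\emph{Projective and inductive limits.} Items two and three together give $W_2^{\tau_2}(\R^d;\mu_2)\subseteq W_2^{\tau_1}(\R^d;\mu_1)$ whenever $\tau_1\le\tau_2$ and $\mu_1\le\mu_2$, so the countable family $(W_2^n(\R^d;n))_{n\in\N}$ is cofinal for inclusion among all $W_2^\tau(\R^d;\mu)$; hence $\bigcap_{\tau,\mu}W_2^\tau(\R^d;\mu)=\bigcap_n W_2^n(\R^d;n)$ and $\bigcup_{\tau,\mu}W_2^\tau(\R^d;\mu)=\bigcup_n W_2^{-n}(\R^d;-n)$. The inclusion $\S(\R^d)\subseteq W_2^\tau(\R^d;\mu)$ is immediate, since $\langle\cdot\rangle^\mu\varphi\in\S(\R^d)$ and $\Lop_\tau$ preserves $\S(\R^d)$, whence $\Lop_\tau\{\langle\cdot\rangle^\mu\varphi\}\in\S(\R^d)\subseteq L_2(\R^d)$. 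Conversely, if $f\in\bigcap_n W_2^n(\R^d;n)$ then $\langle\cdot\rangle^n f\in W_2^m(\R^d)$ for all $m,n$, and the Sobolev embedding $W_2^m(\R^d)\hookrightarrow C_b^k(\R^d)$ for $m>k+d/2$ together with the Leibniz rule shows that all derivatives of $f$ decay faster than any polynomial, i.e. $f\in\S(\R^d)$. That the subspace topologies coincide is checked on generating seminorms: each seminorm $\varphi\mapsto\||\bm x|^\mu\Der^{\bm\alpha}\{\varphi\}\|_{L_2}$ of $\S(\R^d)$ is dominated by $\|\cdot\|_{W_2^n(\R^d;n)}$ for $n\ge\max(|\bm\alpha|,\lceil\mu\rceil)$ (expand $\Der^{\bm\alpha}\{\langle\cdot\rangle^{\lceil\mu\rceil}\varphi\}$ by Leibniz and use $\langle\bm\omega\rangle^{|\bm\alpha|}\le\langle\bm\omega\rangle^{2n}$ with items two and three), while each $\|\cdot\|_{W_2^n(\R^d;n)}$ is dominated by a finite sum of such seminorms (write $\Lop_n$ via the differential operator $(\mathrm{Id}-\Delta)^{\lceil n/2\rceil}$, expand by Leibniz, and bound $\langle\bm x\rangle^n\le 1+|\bm x|^n$). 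This is the sixth item. The seventh then follows by duality from the sixth via the fifth: $\S'(\R^d)$ is the strong dual of the projective limit $\bigcap_n W_2^n(\R^d;n)$, hence the inductive limit of the duals $W_2^{-n}(\R^d;-n)$; concretely, each $f\in\S'(\R^d)$ satisfies $\lvert\langle f,\varphi\rangle\rvert\le C\|\varphi\|_{W_2^n(\R^d;n)}$ for some $n,C$, so Hahn--Banach and the fifth item place $f$ in $W_2^{-n}(\R^d;-n)$, while $W_2^{-n}(\R^d;-n)\subseteq\S'(\R^d)$ holds because $\S(\R^d)$ is dense in $W_2^n(\R^d;n)$.
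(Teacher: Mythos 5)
Your argument is correct, and its skeleton coincides with the paper's: transport the $L_2(\R^d)$ Hilbert structure through $f\mapsto \Lop_\tau\{\langle\cdot\rangle^{\mu}f\}$, record the isometry $\Lop_{\tau,\mu}$, obtain the duality from the $L_2$-pairing, and read \eqref{eq:projective} and \eqref{eq:inductive} off the topology of $\S(\R^d)$ and its dual. Where you genuinely diverge is the weight embedding \eqref{eq:sobembmu}: the paper dispatches it by asserting the norm inequality \eqref{eq:embeddingweights}, i.e.\ monotonicity in $\mu$ with constant $1$, which is immediate only for $\tau=0$; for $\tau\neq 0$ the operator $\Lop_\tau$ and multiplication by $\langle\cdot\rangle^{\mu_1-\mu_2}$ do not commute, so a justification is required, and this is exactly what you supply by proving the $L_2(\R^d)$-boundedness of $\Lop_\tau\, M_{\langle\cdot\rangle^{-\nu}}\,\Lop_{-\tau}$ (via the $S^0_{1,0}$ symbolic calculus, or via Leibniz for even integer $\tau$ plus interpolation and duality). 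This yields the continuous embedding with some constant $C\geq 1$ rather than the paper's claimed constant $1$, which is all that is needed downstream. In the same spirit, your verification that $\bigcap_{n}W_2^{n}(\R^d;n)=\S(\R^d)$ with matching topologies (Sobolev embedding into $C^k_b$, comparison of the two seminorm families) and the Hahn--Banach/density argument for \eqref{eq:inductive} make explicit what the paper compresses into the single sentence ``we can reformulate the topology on $\S(\R^d)$ as \eqref{eq:projective}.'' So the proposal is a more complete version of the same route, at the price of invoking pseudodifferential (or interpolation) machinery for the one step that is not pure bookkeeping.
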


\begin{proof}
	The space $W_2^{\tau} (\R^d; \mu)$ inherits the Hilbertian structure of  $L_2(\R^d)$. For $\tau_1 \leq \tau_2$ and $\mu_1 \leq \mu_2$,  we have moreover the inequalities, 
	\begin{align}
		\lVert f \rVert_{W_2^{\tau_1}(\R^d; \mu)}   &\leq 	\lVert f \rVert_{W_2^{\tau_2}(\R^d; \mu)}, \\
		\lVert f \rVert_{W_2^\tau(\R^d; \mu_1)}     &\leq 	\lVert f \rVert_{W_2^\tau(\R^d; \mu_2)}, \label{eq:embeddingweights}
	\end{align}
from which we deduce \eqref{eq:sobembtau} and \eqref{eq:sobembmu}. The relation
\begin{equation}
	\lVert \Lop_{\tau,\mu} f \rVert_{W_2^{-\tau}(\R^d; - \mu)} = \lVert \Lop_{-\tau}  \{ \langle \cdot \rangle^{-\mu}  \Lop_{\tau,\mu} f \} \rVert_{L_2(\R^d)} =   \lVert f \rVert_{L_2(\R^d)}
	\end{equation}
	proves that $\Lop_{\tau,\mu}$ is an isometry. For every $f,g \in L_2(\R^d)$, we have that 
	\begin{equation} \label{eq:newscalarproduct}
	\langle \Lop_{\tau} \{\langle \cdot \rangle^{\mu} f\} ,  \Lop_{- \tau} \{\langle \cdot \rangle^{- \mu} g\} \rangle_{L_2(\R^d)} = \langle f,g \rangle_{L_2(\R^d)}.
	\end{equation} 
Since $W_2^{\tau}(\R^d;\mu) = \{ 	 \Lop_{\tau} \{\langle \cdot \rangle^{\mu} f\}, \ f\in L_2(\R^d)\}$, we easily deduce the dual of $W_2^{\tau}(\R^d;\mu)$ from \eqref{eq:newscalarproduct}. Finally, we can reformulate the topology on $\S(\R^d)$ as \eqref{eq:projective}. This implies directly \eqref{eq:inductive}. 
\end{proof}

		%%%%%%%%%%%%%%%%%%%%%%%%%%%%%%%%%%%%%%%%%%%%%%%%%%%%%%%%%%	
		\subsubsection{Weigthed Besov Spaces}
		%%%%%%%%%%%%%%%%%%%%%%%%%%%%%%%%%%%%%%%%%%%%%%%%%%%%%%%%%%	

Following Triebel \cite{Triebel2008function}, our definitions of weighted Besov spaces are based on wavelets. More traditionally, Besov spaces are introduced through the Fourier transform, see for instance \cite{Triebel1978interpolation}. The use of wavelets is equivalent and appears to be more convenient for our framework. 

Let us first introduce the wavelet bases that we should use.
We denote by $j\geq 0$ the scaling index and $\bm{m} \in \Z^d$ the shifting index.
Consider $\psi_{\mathrm{F}}$ and $\psi_{\mathrm{M}}$, which are the father and mother wavelet of a wavelet basis for $L_2(\R)$, respectively. We set $\mathrm{G}^0 = \{ M,F \}^d$ and $\mathrm{G}^j = \mathrm{G}^0 \backslash (F,\ldots, F)$ for $j \geq 1$. For a gender $G = (G_1,\ldots ,G_d) \in \mathrm{G}^0$ and for every $\bm{x} = (x_1,\ldots ,x_d) \inÊ\R^d$, we define
\begin{equation} \label{eq:separablewavelet}
	\psi_{G}(\bm{x}) = \prod_{i=1}^d \psi_{G_i}(x_i).
\end{equation}
 
\begin{proposition}[Section 1.2.1, \cite{Triebel2008function}] \label{prop:existencewavelet}
	For every integer $u \geq 0$, there exist compactly supported wavelets $\psi_{\mathrm{F}}$ and $\psi_{\mathrm{M}}$ such that 
\begin{equation}
	\big\{ \psi_{j,G,\bm{m}}, \quad  j \geq 0, G\in \mathrm{G}^j, \bm{m} \in \Z^d \big\}
\end{equation}
is an orthonormal basis of $L_2(\R^d)$, 
where 
\begin{equation}
\psi_{j, G,\bm{m}}  := 2^{jd/2} \psi_{G} (2^j \cdot - \bm{m})
\end{equation}
and $\psi_{G}$ is defined according to \eqref{eq:separablewavelet}. 
\end{proposition}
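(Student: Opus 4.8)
The plan is to reduce the $d$-dimensional statement to the classical one-dimensional Daubechies construction and then tensorize; the parameter $u$, which controls the smoothness and the number of vanishing moments of the wavelets, enters only through the one-dimensional factors and is inherited by the tensor products, so I would not track it separately.

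\paragraph{One-dimensional building blocks.} I would start by quoting the Daubechies wavelet construction (see \cite{Triebel2008function} and the references therein): for every integer $u \geq 0$ there is a compactly supported scaling function $\psi_{\mathrm{F}} \in C^u(\R)$ generating a multiresolution analysis $(V_j)_{j \geq 0}$ of $L_2(\R)$ — that is, $V_j \subseteq V_{j+1}$, $\overline{\bigcup_{j \geq 0} V_j} = L_2(\R)$, and $\{\psi_{\mathrm{F}}(\cdot - m) : m \in \Z\}$ is an orthonormal basis of $V_0$ — together with a compactly supported mother wavelet $\psi_{\mathrm{M}} \in C^u(\R)$ such that $\{2^{j/2}\psi_{\mathrm{M}}(2^j\cdot - m) : m \in \Z\}$ is an orthonormal basis of the detail space $W_j := V_{j+1} \ominus V_j$. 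Telescoping the chain $V_0 \subseteq V_1 \subseteq \cdots$ then gives the orthogonal decomposition $L_2(\R) = V_0 \oplus \bigoplus_{j \geq 0} W_j$.

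\paragraph{Tensorization.} Next I would work inside $L_2(\R^d) \cong L_2(\R) \otimes \cdots \otimes L_2(\R)$ ($d$ Hilbert-space tensor factors) and set $\mathbf{V}_j := V_j \otimes \cdots \otimes V_j$. Writing $E^{\mathrm{F}}_j := V_j$ and $E^{\mathrm{M}}_j := W_j$ and expanding the $d$-fold tensor product of the identities $V_{j+1} = V_j \oplus W_j$, one obtains for every $j \geq 0$ the orthogonal decomposition
\[
	\mathbf{V}_{j+1} = \bigoplus_{G \in \mathrm{G}^0} U^G_j, \qquad U^G_j := E^{G_1}_j \otimes \cdots \otimes E^{G_d}_j, \quad G = (G_1,\ldots,G_d).
\]
Tensoring the relevant one-dimensional scale-$j$ orthonormal bases produces an orthonormal basis of $U^G_j$, and this basis is exactly $\{\psi_{j,G,\bm m} : \bm m \in \Z^d\}$ with $\psi_G$ the separable product \eqref{eq:separablewavelet}, the normalization $2^{jd/2}$ being the product of the $d$ one-dimensional normalizations $2^{j/2}$. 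The summand with $G = (\mathrm{F},\ldots,\mathrm{F})$ is $\mathbf{V}_j$ itself, so $\mathbf{W}_j := \mathbf{V}_{j+1} \ominus \mathbf{V}_j = \bigoplus_{G \in \mathrm{G}^j} U^G_j$ for $j \geq 1$, whereas at the coarsest scale the full family $\{U^G_0 : G \in \mathrm{G}^0\}$ spans $\mathbf{V}_1 = \mathbf{V}_0 \oplus \mathbf{W}_0$.

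\paragraph{Conclusion and main difficulty.} I would then check that the $\mathbf{V}_j$ form a multiresolution analysis of $L_2(\R^d)$: the inclusions are immediate, and $\bigcup_j \mathbf{V}_j$ is dense because it contains every elementary tensor $f_1 \otimes \cdots \otimes f_d$ with $f_i \in \bigcup_j V_j$, a total set in $L_2(\R^d)$. Hence $L_2(\R^d) = \mathbf{V}_1 \oplus \bigoplus_{j \geq 1} \mathbf{W}_j$, and substituting the decompositions above exhibits
\[
	\big\{ \psi_{j,G,\bm m} : j \geq 0,\ G \in \mathrm{G}^j,\ \bm m \in \Z^d \big\}
\]
as a union of orthonormal bases of the mutually orthogonal subspaces $U^G_0$ ($G \in \mathrm{G}^0$) and $U^G_j$ ($j \geq 1$, $G \in \mathrm{G}^j$), whose orthogonal sum is all of $L_2(\R^d)$; this makes it an orthonormal basis. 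The mutual orthogonality is the usual argument: two subspaces at the same scale $j$ are orthogonal by the direct sum defining $\mathbf{V}_{j+1}$, while for $j < j'$ one has $U^G_j \subseteq \mathbf{V}_{j+1} \subseteq \mathbf{V}_{j'}$ and $U^{G'}_{j'} \subseteq \mathbf{W}_{j'} \perp \mathbf{V}_{j'}$. Finally each $\psi_{j,G,\bm m}$ is compactly supported, being a dilate and translate of the compactly supported $\psi_G$, and lies in $C^u(\R^d)$ since $\psi_{\mathrm{F}},\psi_{\mathrm{M}} \in C^u(\R)$. I do not expect any genuine obstacle beyond the one-dimensional Daubechies construction, which I would simply import; the only point needing care is the gender bookkeeping — the pure-father gender $(\mathrm{F},\ldots,\mathrm{F})$ must be kept at scale $j=0$ (to recover $\mathbf{V}_0$) but dropped at every scale $j \geq 1$, which is precisely why $\mathrm{G}^j = \mathrm{G}^0 \setminus \{(\mathrm{F},\ldots,\mathrm{F})\}$ for $j \geq 1$.
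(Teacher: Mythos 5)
Your proof is correct: the paper itself offers no argument for this proposition but simply imports it from Triebel \cite{Triebel2008function}, Section 1.2.1, and the construction given there is precisely the tensorization of one-dimensional compactly supported Daubechies wavelets that you write out. Your gender bookkeeping (keeping $(\mathrm{F},\ldots,\mathrm{F})$ only at $j=0$, so that $\mathbf{V}_1=\bigoplus_{G\in\mathrm{G}^0}U_0^G$ and $\mathbf{W}_j=\bigoplus_{G\in\mathrm{G}^j}U_j^G$ for $j\geq 1$), the normalization $2^{jd/2}$, and the density/orthogonality checks all match the cited construction, so there is nothing to add.
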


Concretely, \cite{Triebel2008function} considers separable Daubechies wavelets with the adequate regularity. 
For $\tau,\mu \in \R$ and $0< p,q \leq \infty$, the \emph{Besov sequence space $b_{p,q}^\tau(\mu)$} is the collection of sequences 
\begin{equation}
	\bm{\lambda} = \{ \lambda_{j,G,\bm{m}}, \quad j\geq 0, G \in \mathrm{G}^j, \bm{m} \in\Z^d \}
\end{equation}
such that
\begin{equation} \label{eq:besovsequencenorm}
	\lVert \bm{\lambda} \rVert_{b_{p,q}^\tau(\mu)} := \left( \sum_{j \geq 0} 2^{jq(\tau - d/p)} \sum_{G\in \mathrm{G}^j} \left(  \sum_{\bm{m}\in \Z^d}{ \langle 2^{-j} \bm{m} \rangle^{\mu p}} { \lvert \lambda_{j,G,\bm{m}}\rvert^p} \right)^{q/p}\right)^{1/q},
\end{equation}
with the usual modifications when $p$ and/or $q = \infty$.
 
\begin{definition} \label{def:besovclassical}
	Let $\tau, \mu \in \R$ and  $0<p,q \leq \infty$. 
	Fix
	\begin{equation} \label{eq:u}
	u > \max( \tau, d(1/p -1)_+ - \tau)
	\end{equation}
	and set $(\psi_{j,G,\bm{m}})$ a wavelet basis of $L_2(\R^d)$ with regularity $u$. 
	The \emph{weighted Besov space $B_{p,q}^{\tau}(\R^d;\mu)$} is the collection of generalized function $f \in \S'(\R^d)$ that can be written as
	\begin{equation} \label{eq:waveletdecomposition}
	 	f = \sum_{j,G,\bm{m}} 2^{- jd/2}\lambda_{j,G,\bm{m}} \psi_{j,G,\bm{m}}
	\end{equation}
	with $\bm{\lambda} = (\lambda_{j,G,\bm{m}}) \in b_{p,q}^\tau(\mu)$, where the convergence holds unconditionally in $\S'(\R^d)$. 
\end{definition} 

This definition is usually introduced as a characterization of Besov spaces. When \eqref{eq:waveletdecomposition} occurs, the representation is unique and  we have that  \cite[Theorem 1.26]{Triebel2008function}
\begin{equation}
\lambda_{j,G,\bm{m}} = 2^{jd/2} \langle f, \psi_{j,G,\bm{m}} \rangle.
\end{equation}  
To measure a given Besov regularity (fixed $p$, $q$, $\tau$, and $\mu$), we should select a wavelet with enough regularity   that the wavelet coefficients are well-defined for $f \in B_{p,q}^{\tau}(\R^d;\mu)$. This is the meaning of \eqref{eq:u}.  Under this condition, and for $f \in B_{p,q}^\tau(\R^d;\mu)$, the quantity
	\small
	\begin{equation} \label{eq:besovnorm}
	\lVert f \rVert_{B_{p,q}^{\tau}(\R^d;\mu)} := \left( \sum_{j\geq0} 2^{j(\tau - d/p + d / 2 )q} \sum_{G\in \mathrm{G}^j} \left( \sum_{\bm{m}\in\Z^d}  { \langle 2^{-j}\bm{m}\rangle^{\mu p}} { |\langle f, \psi_{j,G,\bm{m}} \rangle|^p}\right)^{q/p} \right)^{1/q} 
	\end{equation}
	\normalsize
	is finite, with the usual modifications when $p$ and/or $q = \infty$. The quantity \eqref{eq:besovnorm} is a norm for $p, q \geq 1$, and a quasi-norm otherwise. In any case, the Besov space is complete for its \mbox{(quasi-)norm}, and is therefore a \mbox{(quasi-)Banach} space.
	We have moreover the equivalence \cite[Theorem 4.2.2]{Edmunds2008function}
	\begin{equation}
		f \in 	B_{p,q}^\tau(\R^d;\mu) \Leftrightarrow  {\langle \cdot \rangle^{\mu}}f  \in B_{p,q}^\tau(\R^d) 
	\end{equation}
	with $B_{p,q}^\tau(\R^d) := B_{p,q}^\tau(\R^d;0)$ the classical (non-weighted) Besov space. The family of weighted Besov spaces includes the weighted Sobolev spaces due to the relation \cite[Section 2.2.2]{Edmunds2008function}
	\begin{equation}
		B_{2,2}^\tau(\R^d; \mu) = W_2^\tau(\R^d;\mu).
	\end{equation}
	
Weighted Besov spaces are embedded, as we show in Proposition \ref{prop:embeddingsbesov}.

\begin{proposition} \label{prop:embeddingsbesov}
We fix $\tau_0, \tau_1, \mu_0, \mu_1 \in \R$ and $0<p_0,q_0,p_1,q_1 \leq \infty$. We assume that 
\begin{equation}
\tau_0 > \tau_1 \text{ and } \mu_0 \geq \mu_1.
\end{equation}
If, moreover, we have that
\begin{equation} \label{eq:conditiontau}
p_0 \leq p_1 \text{ and }\tau_0 - \tau_1  \geq  d\left( \frac{1}{p_0} - \frac{1}{p_1} \right)
\end{equation}
or 
 \begin{equation} \label{eq:conditionmu}
 p_1 \leq p_0 \text{ and }\mu_0 - \mu_1  >  d\left( \frac{1}{p_1} - \frac{1}{p_0} \right),
 \end{equation}
then we have the continuous embedding
\begin{equation} \label{eq:besovembedding}
	B_{p_0,q_0}^{\tau_0} (\R^d;\mu_0) \subseteq B_{p_1,q_1}^{\tau_1} (\R^d;\mu_1).
\end{equation}
\end{proposition}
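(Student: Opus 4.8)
The plan is to transfer the question, via the wavelet characterization of Definition~\ref{def:besovclassical}, to a purely sequence-space embedding and then to prove the latter by a scale-by-scale estimate. First I fix an integer
\[
u > \max\big(\tau_0,\ \tau_1,\ d(1/p_0-1)_+-\tau_0,\ d(1/p_1-1)_+-\tau_1\big)
\]
together with one wavelet system $(\psi_{j,G,\bm m})$ of regularity $u$; this single system is admissible simultaneously for both weighted Besov spaces in the statement. If $f\in B_{p_0,q_0}^{\tau_0}(\R^d;\mu_0)$ has wavelet coefficients $\bm\lambda=(\lambda_{j,G,\bm m})$, then $\bm\lambda\in b_{p_0,q_0}^{\tau_0}(\mu_0)$, and the very same expansion $f=\sum_{j,G,\bm m}2^{-jd/2}\lambda_{j,G,\bm m}\psi_{j,G,\bm m}$ exhibits $f$ as an element of $B_{p_1,q_1}^{\tau_1}(\R^d;\mu_1)$ as soon as $\bm\lambda\in b_{p_1,q_1}^{\tau_1}(\mu_1)$ (uniqueness of the representation and the unconditional $\S'(\R^d)$-convergence are part of Triebel's theory and need not be reproved). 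Since $\lVert f\rVert_{B_{p,q}^{\tau}(\R^d;\mu)}=\lVert\bm\lambda\rVert_{b_{p,q}^{\tau}(\mu)}$, the desired continuous embedding reduces to the inequality $\lVert\bm\lambda\rVert_{b_{p_1,q_1}^{\tau_1}(\mu_1)}\le C\,\lVert\bm\lambda\rVert_{b_{p_0,q_0}^{\tau_0}(\mu_0)}$ with $C$ independent of $\bm\lambda$, which is what I would establish.

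For fixed $j\ge0$ and $G\in\mathrm{G}^j$ set $a_{j,G}(p,\nu):=\big(\sum_{\bm m\in\Z^d}\langle 2^{-j}\bm m\rangle^{\nu p}\,\lvert\lambda_{j,G,\bm m}\rvert^{p}\big)^{1/p}$, so that, by \eqref{eq:besovsequencenorm}, $\lVert\bm\lambda\rVert_{b_{p,q}^{\tau}(\mu)}$ is the mixed $\ell^q(j)$-$\ell^q(G)$ norm of $2^{j(\tau-d/p)}a_{j,G}(p,\mu)$. The core is a per-scale bound of $a_{j,G}(p_1,\mu_1)$ by $a_{j,G}(p_0,\mu_0)$. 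Under \eqref{eq:conditiontau} (so $p_0\le p_1$): since $\langle 2^{-j}\bm m\rangle\ge1$ and $\mu_1\le\mu_0$ one has $\langle 2^{-j}\bm m\rangle^{\mu_1}\le\langle 2^{-j}\bm m\rangle^{\mu_0}$ termwise, and then the trivial inclusion $\ell^{p_0}(\Z^d)\hookrightarrow\ell^{p_1}(\Z^d)$ gives $a_{j,G}(p_1,\mu_1)\le a_{j,G}(p_0,\mu_0)$. Under \eqref{eq:conditionmu} (so $p_1\le p_0$): I apply Hölder's inequality with exponents $p_0/p_1$ and $p_0/(p_0-p_1)$ to the splitting $\langle 2^{-j}\bm m\rangle^{\mu_1p_1}\lvert\lambda_{j,G,\bm m}\rvert^{p_1}=\big(\langle 2^{-j}\bm m\rangle^{\mu_0p_1}\lvert\lambda_{j,G,\bm m}\rvert^{p_1}\big)\,\langle 2^{-j}\bm m\rangle^{(\mu_1-\mu_0)p_1}$, obtaining
\[
a_{j,G}(p_1,\mu_1)\ \le\ a_{j,G}(p_0,\mu_0)\,\Big(\sum_{\bm m\in\Z^d}\langle 2^{-j}\bm m\rangle^{-s}\Big)^{1/p_1-1/p_0},\qquad s:=(\mu_0-\mu_1)\,\frac{p_0p_1}{p_0-p_1}.
\]
The strict inequality in \eqref{eq:conditionmu} says precisely $s>d$, so the series converges, and a standard integral comparison with $\int_{\R^d}\langle 2^{-j}\bm x\rangle^{-s}\drm\bm x=C_{d,s}\,2^{jd}$ gives $\sum_{\bm m}\langle 2^{-j}\bm m\rangle^{-s}\le C_{d,s}\,2^{jd}$. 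In either case we arrive at
\[
a_{j,G}(p_1,\mu_1)\ \le\ C\,2^{jd\kappa}\,a_{j,G}(p_0,\mu_0),\qquad \kappa:=\big(1/p_1-1/p_0\big)_+ .
\]

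It remains to insert this into \eqref{eq:besovsequencenorm}. Multiplying by the dyadic weight $2^{j(\tau_1-d/p_1)}$ and using the identity $\tau_1-d/p_1+d\kappa=(\tau_0-d/p_0)-\eta$ — where $\eta=(\tau_0-\tau_1)-d(1/p_0-1/p_1)\ge0$ under \eqref{eq:conditiontau} and $\eta=\tau_0-\tau_1>0$ under \eqref{eq:conditionmu} — one gets
\[
2^{j(\tau_1-d/p_1)}\,a_{j,G}(p_1,\mu_1)\ \le\ C\,2^{-j\eta}\,\big(2^{j(\tau_0-d/p_0)}\,a_{j,G}(p_0,\mu_0)\big).
\]
The sum over $G\in\mathrm{G}^j$ is harmless because $\#\mathrm{G}^j\le 2^d$. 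Taking the $\ell^{q_1}$ norm in $j$ and comparing with the $\ell^{q_0}$ norm defining $\lVert\bm\lambda\rVert_{b_{p_0,q_0}^{\tau_0}(\mu_0)}$ finishes the argument: when $q_0\le q_1$ one only needs $\ell^{q_0}\subseteq\ell^{q_1}$ and $2^{-j\eta}\le1$; when $q_0>q_1$ one bounds $2^{j(\tau_0-d/p_0)}a_{j,G}(p_0,\mu_0)\le\lVert\bm\lambda\rVert_{b_{p_0,q_0}^{\tau_0}(\mu_0)}$ uniformly in $j$ and absorbs the residual factor into the convergent geometric series $\sum_j 2^{-j\eta q_1}$, which is where the strict gap provided by $\tau_0>\tau_1$ (directly in \eqref{eq:conditionmu}, and through the budget left over by \eqref{eq:conditiontau}) is used. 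The usual modifications cover $p$ or $q$ equal to $\infty$.

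I expect the only genuinely delicate point to be the per-scale estimate under \eqref{eq:conditionmu}: one must extract from the weight summability the exact dimensional factor $\sum_{\bm m}\langle 2^{-j}\bm m\rangle^{-s}\asymp 2^{jd}$, since it is precisely this $2^{jd}$ — equivalently, the smoothness loss $d(1/p_1-1/p_0)$ — that must be paid for by the gap between $\mu_0$ and $\mu_1$, and this is what forces the specific forms of conditions \eqref{eq:conditiontau} and \eqref{eq:conditionmu}. Everything else is careful bookkeeping of the indices $(\tau,p,\mu)$ across dyadic scales, with $q$ playing only the minor auxiliary role above.
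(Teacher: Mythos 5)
Your proposal is correct in substance and follows the same route as the paper: reduce to the sequence spaces $b^{\tau}_{p,q}(\mu)$ via the wavelet isomorphism, then compare scale by scale, using H\"older's inequality on the sum over $\bm{m}$ for the case \eqref{eq:conditionmu}. Two differences are worth recording. First, you prove the case \eqref{eq:conditiontau} directly (monotonicity of the weight $\langle 2^{-j}\bm{m}\rangle\geq 1$ plus $\ell^{p_0}\hookrightarrow\ell^{p_1}$), whereas the paper cites Edmunds--Triebel for it; yours is the ``easy proof by sequence spaces'' the paper alludes to but does not write out. Second, and more importantly, your treatment of \eqref{eq:conditionmu} is sharper than the paper's: the paper bounds $\sum_{\bm{m}}\langle 2^{-j}\bm{m}\rangle^{(\mu_1-\mu_0)p_1 b}$ by a constant that is implicitly taken uniform in $j$, whereas this sum in fact grows like $2^{jd}$. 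You extract exactly this factor, identify the resulting per-scale loss $d(1/p_1-1/p_0)$ in the dyadic exponent, and show that it is paid for by the gap $\tau_0>\tau_1$ through the decay $2^{-j(\tau_0-\tau_1)}$. This is precisely the bookkeeping needed to make the conclusion follow from the stated hypotheses, so on this point your version is the more complete one.

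One caveat. Under \eqref{eq:conditiontau} with equality, $\tau_0-\tau_1=d\left(1/p_0-1/p_1\right)$, your exponent $\eta$ vanishes and your argument for $q_0>q_1$ (absorbing into the series $\sum_j 2^{-j\eta q_1}$) breaks down: there is then no ``budget left over,'' contrary to what your last step suggests. This is not really a defect of your argument alone --- in that critical case the embedding genuinely requires $q_0\le q_1$ (take coefficients supported at a single $\bm{m}$ per scale; the sequence-space embedding then reduces to $\ell^{q_0}\subseteq\ell^{q_1}$) --- so the statement is slightly too strong there, and the paper sidesteps the issue by deferring \eqref{eq:conditiontau} to the literature. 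To close your proof you should either assume $q_0\le q_1$ when equality holds in \eqref{eq:conditiontau}, or require strict inequality there.
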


\begin{proof}
	Condition \eqref{eq:conditiontau} was proved to be sufficient in \cite[Section 4.2.3]{Edmunds2008function}. Note however that we can easily prove the embedding by using Besov sequence spaces, as we shall do   for the other condition. We could not find any precise statement of embeddings between Besov spaces for $p_1 \leq p_0$ in the literature, so we provide our own proof for the sufficiency of \eqref{eq:conditionmu}. 
	
	First of all, the parameter $q$ is dominated by parameters $\tau$ and $p$ in the sense that, for every $\tau \geq 0$, $\epsilon >0$, and  $0<p,q,r \leq \infty$, we have the embedding \cite[Proposition 2, Section 2.3.2]{Triebel2010theory}
	\begin{equation} \label{eq:qdominated}
		B_{p,q}^{\tau + \epsilon}(\R^d;\mu) \subseteq B_{p,r}^{\tau} (\R^d;\mu). 
	\end{equation}
	Note that Triebel considers unweighted spaces  in \cite{Triebel2010theory}, but the extension to the weighted case is obvious.
	This allows to restrict us to the case $q_0 = q_1 = q$. Fix $\bm{\lambda} = \{\lambda_{j,G,\bm{m}}, \ j\geq 0, G \in \mathrm{G}^j , \bm{m}\in \Z^d\}$. Due to the H\"older inequality, as soon as $1/a + 1/b = 1$, we have, for every $j \geq 0$ and $G \in \mathrm{G}^j$,  that
	\begin{equation} \label{eq:holderinterm}
		\sum_{\bm{m}\in\Z^d} \langle 2^{-j} \bm{m} \rangle^{\mu_1 p_1} \lvert \lambda_{j,G,\bm{m}} \rvert^{p_1} \leq \left( \sum_{\bm{m}\in \Z^d} \langle 2^{-j} \bm{m} \rangle^{(\mu_1-\mu_0) p_1 b} \right)^{1/b}  \left( 	\sum_{\bm{m}\in\Z^d}   \langle 2^{-j} \bm{m} \rangle^{\mu_0 p_1 a} \lvert \lambda_{j,G,\bm{m}} \rvert^{p_1 a }    \right)^{1/a}.
	\end{equation}
	We choose $a = p_0 / p_1 \geq 1$, thus $(\mu_1 - \mu_0) p_1 b = (\mu_1 - \mu_0) / (1/p_1 - 1/p_0) < -d$ by using \eqref{eq:conditionmu}, and $ \sum_{\bm{m}\in \Z^d} \langle 2^{-j} \bm{m} \rangle^{(\mu_1-\mu_0) p_1 b} < \infty$. Since $a p_1 = p_0$, we can  rewrite \eqref{eq:holderinterm} as
	\begin{equation}
		\left( \sum_{\bm{m}\in\Z^d} \langle 2^{-j} \bm{m} \rangle^{\mu_1 p_1} \lvert \lambda_{j,G,\bm{m}} \rvert^{p_1}\right)^{1/p_1} \leq C  \left( 	\sum_{\bm{m}\in\Z^d}   \langle 2^{-j} \bm{m} \rangle^{\mu_0 p_0} \lvert \lambda_{j,G,\bm{m}} \rvert^{p_0 }    \right)^{1/p_0}
	\end{equation}
	with $C>0$ a finite constant. Using \eqref{eq:besovsequencenorm}, this  implies   that $\lVert \bm{\lambda} \rVert_{b_{p_1,q}^{\tau_1}(\mu_1)} \leq C' \lVert \bm{\lambda} \rVert_{b_{p_0,q}^{\tau_0}(\mu_0)}$ and, then, the corresponding embedding between Besov sequence spaces. Finally, \eqref{eq:besovembedding} is a consequence of the isomorphism between Besov sequence spaces and Besov function spaces in Definition \ref{def:besovclassical} (see  \cite[Theorem 1.26]{Triebel2008function} for more details on the isomorphism). We let the reader adapt the proof when $p$ and/or $q$ are infinite.
\end{proof}

If the only knowledge provided to us is  that the  generalized function $f$ is in $\S'(\R^d)$, then this is not enough to of which regularity $u$ should be the wavelet to characterize the Besov smoothness of $f$. But if we have additional information on $f$, for instance its inclusion in a Sobolev space, then the situation is different. Proposition \label{prop:sobotobesov} gives a wavelet-domain criterion to determine if a generalized function $f$, known to be in $W_2^{\tau_0}(\R^d;\mu_0)$, is actually in a given Besov space $B_{p,q}^{\tau}(\R^d;\mu)$. 

\begin{proposition} \label{prop:sobotobesov}
	Let $\tau, \tau_0, \mu, \mu_0 \in \R$ and $0<p,q \leq \infty$. We set 
	\begin{equation} \label{eq:conditionu}
		u > \max ( \abs{\tau_0} , \abs{\tau - d(1/p - 1/2)_+}) .
	\end{equation}
	Then, the generalized function $f \in W_2^{\tau_0} (\R; \mu_0)$ is in $B_{p,q}^\tau(\R^d;\mu)$ if and only if
	\begin{equation} \label{eq:sobotobesov}
	\sum_{j\geq0} 2^{j(\tau - d/p + d / 2 )q} \sum_{G\in \mathrm{G}^j} \left( \sum_{\bm{m}\in\Z^d} { \langle 2^{-j}\bm{m}\rangle^{\mu p}} { |\langle f, \psi_{j,G,\bm{m}} \rangle|^p} \right)^{q/p} < \infty,
	\end{equation}
	with $(\psi_{j,G,\bm{m}})$ a wavelet basis of $L_2(\R^d)$ of regularity $u$, with the usual modifications when $p$ and/or $q = \infty$.
\end{proposition}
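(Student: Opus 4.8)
The plan is to reduce Proposition \ref{prop:sobotobesov} to Definition \ref{def:besovclassical} by showing that the wavelet coefficients $\langle f, \psi_{j,G,\bm{m}} \rangle$ are unambiguously defined and independent of which admissible wavelet of regularity $u$ we choose, once $f$ is known to lie in $W_2^{\tau_0}(\R^d;\mu_0)$. The subtlety that Definition \ref{def:besovclassical} sidesteps is that for a generic $f \in \S'(\R^d)$, pairing $f$ against $\psi_{j,G,\bm{m}}$ requires $\psi_{j,G,\bm{m}} \in \S(\R^d)$, whereas a compactly supported Daubechies wavelet is only $C^u$; the condition \eqref{eq:u} in Definition \ref{def:besovclassical} is precisely what makes the pairing legitimate \emph{for elements of the Besov space in question}. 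Here we instead have the a priori information $f \in W_2^{\tau_0}(\R^d;\mu_0)$, and the first step is to observe that $\psi_{j,G,\bm{m}} \in W_2^{-\tau_0}(\R^d;-\mu_0)$ whenever $u > \abs{\tau_0}$: indeed $\psi_{j,G,\bm{m}}$ is compactly supported, so the weight $\langle \cdot \rangle^{-\mu_0}$ is harmless, and it has $u$ derivatives in $L_2$ together with (for $G \neq (F,\ldots,F)$, i.e.\ $j \geq 1$) enough vanishing moments, so it has Sobolev smoothness up to order $u > \abs{\tau_0} \geq -\tau_0$. By the duality $\left(W_2^{\tau_0}(\R^d;\mu_0)\right)' = W_2^{-\tau_0}(\R^d;-\mu_0)$ from Proposition \ref{property_sobolev}, the pairing $\langle f, \psi_{j,G,\bm{m}}\rangle$ is well-defined and coincides with the $\S'$--$\S$ pairing on the dense subspace $\S(\R^d)$.

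Next I would establish the wavelet expansion of $f$ itself. Since $f \in W_2^{\tau_0}(\R^d;\mu_0) = B_{2,2}^{\tau_0}(\R^d;\mu_0)$, and $u > \abs{\tau_0} = \max(\tau_0, d(1/2-1)_+ - \tau_0)$ (the second term being $\leq \abs{\tau_0}$ since $d(1/2-1)_+ = 0$), the wavelet of regularity $u$ in our hypothesis \eqref{eq:conditionu} is admissible for the space $B_{2,2}^{\tau_0}(\R^d;\mu_0)$ in the sense of Definition \ref{def:besovclassical}. Hence $f = \sum_{j,G,\bm{m}} 2^{-jd/2} \lambda_{j,G,\bm{m}} \psi_{j,G,\bm{m}}$ with $\lambda_{j,G,\bm{m}} = 2^{jd/2}\langle f, \psi_{j,G,\bm{m}}\rangle$, the series converging unconditionally in $\S'(\R^d)$. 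The key point is that this expansion, and in particular the coefficients, are those produced by our fixed wavelet basis of regularity $u$, and by \cite[Theorem 1.26]{Triebel2008function} the representation is unique.

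With $f$ so represented, the two directions of the equivalence follow. For the ``only if'' direction: if $f \in B_{p,q}^\tau(\R^d;\mu)$, then by Definition \ref{def:besovclassical} applied with a wavelet of regularity $u' > \max(\tau, d(1/p-1)_+ - \tau)$, $f$ has a wavelet expansion with coefficients in $b_{p,q}^\tau(\mu)$; one checks that the coefficients computed against our wavelet of regularity $u$ from \eqref{eq:conditionu} agree with those computed against the regularity-$u'$ wavelet, since both compute $2^{jd/2}\langle f, \psi_{j,G,\bm{m}}\rangle$ via the $\S'$--$\S$ pairing, and uniqueness of the expansion in $\S'(\R^d)$ forces them to coincide — here one needs $u$ large enough that $\psi_{j,G,\bm{m}}$ tests against $f \in W_2^{\tau_0}(\R^d;\mu_0)$, which is exactly $u > \abs{\tau_0}$. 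Hence \eqref{eq:sobotobesov}, which is just $\lVert \bm{\lambda}\rVert_{b_{p,q}^\tau(\mu)}^q < \infty$, holds. For the ``if'' direction: if \eqref{eq:sobotobesov} holds with our regularity-$u$ wavelet, set $\lambda_{j,G,\bm{m}} = 2^{jd/2}\langle f, \psi_{j,G,\bm{m}}\rangle$, so that $\bm{\lambda} \in b_{p,q}^\tau(\mu)$. The condition $u > \abs{\tau - d(1/p-1/2)_+}$ must be used to ensure $u > \max(\tau, d(1/p-1)_+ - \tau)$, so that this same wavelet basis is admissible in Definition \ref{def:besovclassical} for $B_{p,q}^\tau(\R^d;\mu)$; then $g := \sum_{j,G,\bm{m}} 2^{-jd/2}\lambda_{j,G,\bm{m}}\psi_{j,G,\bm{m}}$ converges unconditionally in $\S'(\R^d)$ and defines an element of $B_{p,q}^\tau(\R^d;\mu)$. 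Finally one identifies $g = f$: both have the same wavelet coefficients against the regularity-$u$ basis, and since $f$ also admits an unconditionally convergent wavelet expansion in $\S'(\R^d)$ with these coefficients (from its membership in $W_2^{\tau_0}(\R^d;\mu_0)$ via the previous paragraph), the two series coincide term by term, whence $f = g \in B_{p,q}^\tau(\R^d;\mu)$. I expect the main obstacle to be the bookkeeping in the last identification step: one must carefully check that the inequality $u > \abs{\tau - d(1/p-1/2)_+}$ simultaneously guarantees (i) that $\psi_{j,G,\bm{m}}$ pairs with any element of $W_2^{\tau_0}$, requiring only $u > \abs{\tau_0}$, and (ii) that the regularity-$u$ basis is a legitimate analysis/synthesis system for $B_{p,q}^\tau(\R^d;\mu)$, requiring $u > \max(\tau, d(1/p-1)_+ - \tau)$; verifying that $\max(\tau, d(1/p-1)_+ - \tau) \leq \abs{\tau - d(1/p-1/2)_+}$ is the routine-but-necessary arithmetic linking \eqref{eq:conditionu} to \eqref{eq:u}, and the unconditional convergence in $\S'(\R^d)$ is what lets us compare two series term by term without convergence issues.
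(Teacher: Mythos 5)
Your overall strategy (use the a priori Sobolev membership to make the pairings $\langle f,\psi_{j,G,\bm{m}}\rangle$ meaningful, then reduce to the wavelet characterization of Definition \ref{def:besovclassical}) is in the right spirit, but two weight-bearing steps do not hold as written. First, in the ``only if'' direction you argue that the coefficients of $f$ computed against a regularity-$u'$ basis and against the regularity-$u$ basis ``coincide by uniqueness of the expansion.'' They do not: these are two different orthonormal systems, so the two coefficient sequences are different objects and there is nothing for uniqueness to identify. What you actually need, in both directions, is that the fixed regularity-$u$ basis is itself an admissible analysis/synthesis system for $B_{p,q}^{\tau}(\R^d;\mu)$, i.e.\ that $u$ satisfies \eqref{eq:u} for that space. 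Second, you claim this follows from \eqref{eq:conditionu} via the ``routine'' inequality $\max(\tau, d(1/p-1)_+-\tau)\leq \abs{\tau - d(1/p-1/2)_+}$, which is false in general: for $d=1$, $p=1$, $\tau=2$ the left-hand side is $2$ while the right-hand side is $3/2$, and more generally it fails whenever $p<2$ and $\tau> d(1/p-1/2)$ (it does hold for $p\geq 2$ or for $\tau\leq 0$, the regime in which the proposition is later applied, but not in the stated generality). Since $\tau_0$ is an independent parameter, the term $\abs{\tau_0}$ in \eqref{eq:conditionu} cannot rescue the implication either. So the bridge from \eqref{eq:conditionu} to the admissibility condition \eqref{eq:u}, on which your ``if'' direction (and implicitly your ``only if'' direction) rests, is missing.

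The paper's proof is organized precisely so as not to invoke \eqref{eq:u} for $B_{p,q}^{\tau}(\R^d;\mu)$ directly: it picks $\tau_1 < \min(\tau_0, \tau - d(1/p-1/2)_+)$ and $\mu_1 \leq \min(\mu_0, \mu - d(1/p-1/2)_+)$, uses Proposition \ref{prop:embeddingsbesov} to embed both $B_{p,q}^{\tau}(\R^d;\mu)$ and $W_2^{\tau_0}(\R^d;\mu_0)$ into the common ambient space $W_2^{\tau_1}(\R^d;\mu_1)=B_{2,2}^{\tau_1}(\R^d;\mu_1)$, and notes that \eqref{eq:conditionu}---whose two absolute values are exactly calibrated so that $\tau_1$ may be chosen with $\abs{\tau_1}<u$---makes the regularity-$u$ basis admissible for that ambient space; the coefficients of every candidate $f$ are then well defined, and the equivalence is the wavelet-norm characterization of $B_{p,q}^{\tau}(\R^d;\mu)$ applied inside $W_2^{\tau_1}(\R^d;\mu_1)$. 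This also explains the otherwise odd-looking quantity $\abs{\tau - d(1/p-1/2)_+}$ in \eqref{eq:conditionu}: it is the smoothness of the Sobolev space into which $B_{p,q}^{\tau}(\R^d;\mu)$ embeds, not the admissibility threshold \eqref{eq:u} of $B_{p,q}^{\tau}(\R^d;\mu)$ itself. To repair your version you would have to either strengthen the hypothesis on $u$ to include $\max(\tau, d(1/p-1)_+-\tau)$, or restructure the argument around such an intermediate space as the paper does.
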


\begin{proof}
Let $\tau_1 < \min ( \tau_0, \tau - d(1 /p - 1/2)_+)$ and $\mu_1 \leq \min ( \mu_0 , \mu - d(1 /p - 1/2)_+)$. Then, according to Proposition \ref{prop:embeddingsbesov}, we have the embeddings
$$B_{p,q}^\tau (\R^d;\mu) \subseteq W_2^{\tau_1}(\R^d;\mu_1) \text{ and } W_2^{\tau_0}(\R^d;\mu_0)  \subseteq W_2^{\tau_1}(\R^d;\mu_1).$$
Condition \eqref{eq:conditionu} implies that we can apply Definition \ref{def:besovclassical} into the space $W_2^{\tau_1}(\R^d;\mu_1)$. In particular, if $(\psi_{j,G,\bm{m}})$ is the wavelet basis of Definition \ref{def:besovclassical} with regularity $u$, and for every function $f \in W_2^{\tau_1}(\R^d;\mu_1)$, then the wavelet coefficients $\langle f,\psi_{j,G,\bm{m}}\rangle$ are well-defined. Moreover, we have the characterization 
$$f \in B_{p,q}^{\tau} (\R^d;\mu) \Leftrightarrow \lVert f\rVert_{B_{p,q}^\tau(\R^d;\mu)}< \infty$$
for $f \in W_2^{\tau_1}(\R^d;\mu_1)$ and, therefore, for $f \in W_2^{\tau_1}(\R^d;\mu_0)$.
\end{proof}

%%%%%%%%%%%%%%%%%%%%%%%%%%%%%%%%%%%%%%%%%%%%%%%%%%%%%%%%%%%%%%%%
\section{Moment Estimates for L\'evy White Noises} \label{sec:estimates}
%%%%%%%%%%%%%%%%%%%%%%%%%%%%%%%%%%%%%%%%%%%%%%%%%%%%%%%%%%%%%%%%

The goal of this section is to obtain bounds for the $p$th moments of the random variable  $\langle w,\varphi\rangle$, where $w$ is a L\'evy white noise and $\varphi \in \S(\R^d)$.
The bounds are related to the moments of $\varphi$. For instance, for a symmetric $\alpha$-stable white noise $w_\alpha$,  it is known \cite[Lemma 2]{Fageot2015besov} that
\begin{equation} \label{eq:SalphaS_moments}
	\mathbb{E}\left[ \lvert \langle w_\alpha,\varphi \rangle\rvert^p \right] = C_{p,\alpha} \lVert \varphi \rVert_{\alpha}^p,
\end{equation}
where $C_{p,\alpha}$ is a constant, finite if and only if $\alpha = 2$ (Gaussian case), or $p < \alpha < 2$ (non-Gaussian case). 

	%%%%%%%%%%%%%%%%%%%%%%%%%%%%%%%%%%%%%%%%%%%%%%%%%%%%%%%%%%%%%%		
	\subsection{Blumenthal-Getoor Indices}
	%%%%%%%%%%%%%%%%%%%%%%%%%%%%%%%%%%%%%%%%%%%%%%%%%%%%%%%%%%%%%%		

In order to generalize \eqref{eq:SalphaS_moments} to non-stable white noises, we   consider the Blumenthal-Getoor indices introduced in \cite{Blumenthal1961sample}, which are   classical tools to estimate the moments of L\'evy processes \cite{Deng2015shift,Kuhn2015existence,Luschgy2008moment}. 
	
	\begin{definition} \label{def:BG}
		Let $f$ be a L\'evy exponent. 
		We consider the two intervals
		\begin{align}
			I_0 &= \left\{ p \in [0,2], \quad   \underset{|\xi| \rightarrow 0}{\lim \sup}  \frac{f(\xi)}{|\xi |^p} < \infty \right\},  \label{eq:I0} \\
			I_\infty &= \left\{ p \in [0,2], \quad  \underset{|\xi| \rightarrow \infty}{\lim \sup}  \frac{f(\xi)}{|\xi |^p} < \infty  \right\}. \label{eq:Iinfty}
		\end{align}
		The \emph{indices of Blumenthal-Getoor} are defined by
		\begin{equation}
			\beta_0  = \sup I_0, \quad  \beta_\infty  = \inf I_\infty.	
		\end{equation}
	\end{definition}
	
%	Proposition \ref{prop:boundexponent} is directly deduced from Definition \ref{def:BG}.
	
	\begin{proposition} \label{prop:boundexponent}
	Consider a L\'evy exponent $f$ with intervals $I_0$ and $I_\infty$   as in \eqref{eq:I0} and \eqref{eq:Iinfty}. Then,
	for $\tilde{\beta}_0 \in I_0$ and $\tilde{\beta}_\infty \in I_\infty$,  we have the inequality
	\begin{equation} \label{eq:boundfphi}
		\int_{\R^d} \lvert f(\varphi (\bm{x}) ) \rvert \drm \bm{x} \leq C \left( \lVert \varphi \rVert_{\tilde{\beta}_0}^{\tilde{\beta}_0} + \lVert \varphi \rVert_{\tilde{\beta}_\infty}^{\tilde{\beta}_\infty} \right)
	\end{equation}
	for all $\varphi \in L_{\tilde{\beta}_0}(\R^d) \bigcap  L_{\tilde{\beta}_\infty}(\R^d)$ and some constant $C>0$.
	\end{proposition}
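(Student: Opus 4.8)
The plan is to convert the two defining properties of $\tilde\beta_0\in I_0$ and $\tilde\beta_\infty\in I_\infty$ into pointwise bounds on $\abs{f}$ — a power bound near $0$ and a power bound near $\infty$ — and then to split the integral over $\R^d$ according to the size of $\abs{\varphi(\bm x)}$.

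First I would extract the local estimates. Because $\tilde\beta_0\in I_0$, finiteness of the limsup in \eqref{eq:I0} (read as $\limsup_{\abs\xi\to 0}\abs{f(\xi)}/\abs\xi^{\tilde\beta_0}<\infty$) gives $\delta>0$ and $M_0<\infty$ with $\abs{f(\xi)}\le M_0\abs\xi^{\tilde\beta_0}$ for $\abs\xi\le\delta$; here I use that $f(0)=0$, which holds since $f$ is the continuous log-characteristic function of an infinitely divisible variable and $\mathrm{e}^{f(0)}=1$. Likewise $\tilde\beta_\infty\in I_\infty$ gives $R\ge\delta$ and $M_\infty<\infty$ with $\abs{f(\xi)}\le M_\infty\abs\xi^{\tilde\beta_\infty}$ for $\abs\xi\ge R$. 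On the compact set $\{\delta\le\abs\xi\le R\}\subset\R$ the continuous function $f$ is bounded by some $M_1<\infty$, and since $\tilde\beta_\infty\ge 0$ we have $\abs\xi^{\tilde\beta_\infty}\ge\delta^{\tilde\beta_\infty}$ there, hence $\abs{f(\xi)}\le M_1\delta^{-\tilde\beta_\infty}\abs\xi^{\tilde\beta_\infty}$ on that annulus as well. Setting $M=\max(M_\infty,M_1\delta^{-\tilde\beta_\infty})$ this merges into the single bound $\abs{f(\xi)}\le M\abs\xi^{\tilde\beta_\infty}$ valid for every $\abs\xi\ge\delta$.

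Next I would write $\R^d=\{\abs\varphi\le\delta\}\cup\{\abs\varphi>\delta\}$ and bound the two contributions separately: on the first region $\abs{f(\varphi(\bm x))}\le M_0\abs{\varphi(\bm x)}^{\tilde\beta_0}$, so its integral is at most $M_0\lVert\varphi\rVert_{\tilde\beta_0}^{\tilde\beta_0}$; on the second region $\abs{f(\varphi(\bm x))}\le M\abs{\varphi(\bm x)}^{\tilde\beta_\infty}$, so its integral is at most $M\lVert\varphi\rVert_{\tilde\beta_\infty}^{\tilde\beta_\infty}$. Adding these and taking $C=\max(M_0,M)$ — a constant depending only on $f$ and on the chosen exponents — yields \eqref{eq:boundfphi}, the right-hand side being finite exactly because $\varphi\in L_{\tilde\beta_0}(\R^d)\cap L_{\tilde\beta_\infty}(\R^d)$. (When $\tilde\beta_0=0$ or $\tilde\beta_\infty=0$ one interprets $L_0(\R^d)$ via $\lVert\varphi\rVert_0^0=\int_{\R^d}\One_{\varphi\neq 0}$; the estimates above are unaffected.)

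The argument is essentially routine; the only point needing care is the intermediate range $\delta\le\abs\xi\le R$, where neither asymptotic estimate applies. This is handled purely by continuity of $f$ and compactness of that set, the resulting constant being absorbed into the $\abs\xi^{\tilde\beta_\infty}$ bound thanks to the lower bound $\abs\xi\ge\delta$ — this is the step I would be most careful to state cleanly, since it is where the constant $C$ acquires its dependence on $f$.
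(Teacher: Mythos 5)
Your argument is correct and is essentially the paper's own proof: both obtain a global pointwise bound $\lvert f(\xi)\rvert \leq C\left( \lvert \xi \rvert^{\tilde{\beta}_0} + \lvert \xi \rvert^{\tilde{\beta}_\infty}\right)$ from the limsup conditions defining $I_0$ and $I_\infty$ together with continuity of $f$ on the intermediate compact set, and then integrate with $\xi = \varphi(\bm{x})$. Your explicit splitting of $\R^d$ into $\{\lvert\varphi\rvert\le\delta\}$ and $\{\lvert\varphi\rvert>\delta\}$ is merely a more detailed writing of the same estimate, with the constant's provenance made explicit.
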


	\begin{proof}
	The functions $\xi \mapsto \lvert f(\xi) \rvert   $ and $\xi \mapsto  \lvert \xi \rvert^{\tilde{\beta}_0}   +  \lvert \xi \rvert^{\tilde{\beta}_\infty}$ are two continuous functions, the second one dominating the first one at zero and at infinity. Therefore, there exists a constant $C>0$  that satisfies 
	\begin{equation*} \label{eq:boundexponent}
		\lvert f(\xi) \rvert \leq C \left(   \lvert \xi \rvert^{\tilde{\beta}_0}   +  \lvert \xi \rvert^{\tilde{\beta}_\infty} \right).
	\end{equation*} 
	Integrating the latter equation over $\bm{x} \in \R^d$ with $\xi = \varphi(\bm{x})$, we obtain \eqref{eq:boundfphi}.
	\end{proof}

	%%%%%%%%%%%%%%%%%%%%%%%%%%%%%%%%%%%%%%%%%%%%%%%%%%%%%%%%%%%%%%	
	\subsection{Moment Estimates for $\langle w,\varphi\rangle$} \label{subsec:momentestimates}
	%%%%%%%%%%%%%%%%%%%%%%%%%%%%%%%%%%%%%%%%%%%%%%%%%%%%%%%%%%%%%%		

We estimate the moments of a random variable by relaying the fractional moments to the characteristic function. Proposition \ref{prop:fractionalmoment} can be found for instance in \cite{Deng2015shift,Laue1980remarks,Matsui2013fractional} with some variations. For the sake of completeness, we recall the proof, similar to the one of \cite{Deng2015shift}. 

\begin{proposition} \label{prop:fractionalmoment}
	For a random variable $X$ with characteristic function $\Phi_X$ and $0<p<2$, we have the relation
	\begin{equation} \label{eq:pthmoments}
		\mathbb{E} [|X|^p] = c_p \int_{\R} \frac{1-\Re ( \Phi_X ) (\xi)}{|\xi|^{p+1}} \drm \xi \in [0,\infty],
	\end{equation}
	for some finite constant $c_p >0$, where $\Re ( z )$ denotes the real part of $z \in \C$. 
\end{proposition}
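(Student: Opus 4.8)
The plan is to prove the identity \eqref{eq:pthmoments} by starting from the well-known real-variable formula
\begin{equation*}
	|x|^p = c_p \int_{\R} \frac{1 - \cos(x\xi)}{|\xi|^{p+1}} \, \drm \xi, \qquad x \in \R, \ 0<p<2,
\end{equation*}
where $c_p = \left( \int_{\R} \frac{1-\cos t}{|t|^{p+1}} \drm t \right)^{-1}$ is finite precisely because $1-\cos t = O(t^2)$ near $0$ (so the integrand is $O(|t|^{1-p})$, integrable since $p<2$) and $1-\cos t$ is bounded at infinity (so the integrand is $O(|t|^{-1-p})$, integrable since $p>0$). This is the key elementary fact; I would establish it by the substitution $t = x\xi$ for $x \neq 0$ (noting the integrand is even so the sign of $x$ is irrelevant), and observe both sides vanish when $x=0$.

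Next I would take expectations on both sides with $x$ replaced by the random variable $X$. Since the integrand $(1-\cos(X\xi))/|\xi|^{p+1}$ is nonnegative, the Tonelli (Fubini for nonnegative functions) theorem applies unconditionally and allows swapping $\expect{\cdot}$ with $\int_{\R} \drm \xi$, giving
\begin{equation*}
	\expect{|X|^p} = c_p \int_{\R} \frac{1 - \expect{\cos(X\xi)}}{|\xi|^{p+1}} \, \drm \xi \in [0,\infty].
\end{equation*}
Finally I would identify $\expect{\cos(X\xi)} = \Re \expect{\ue^{\ui X \xi}} = \Re\big(\Phi_X(\xi)\big)$, which is immediate from $\ue^{\ui X\xi} = \cos(X\xi) + \ui \sin(X\xi)$ and linearity of expectation, yielding exactly \eqref{eq:pthmoments}. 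The membership in $[0,\infty]$ is automatic since the integrand is nonnegative, and no integrability assumption on $X$ is needed because we never split the integral into a difference of two possibly-infinite pieces.

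The only genuinely delicate point — and the one I would treat with a little care — is the justification of the scalar identity for $|x|^p$ and the finiteness of $c_p$; everything after that is a one-line application of Tonelli and the definition of the characteristic function. I expect no real obstacle here: the hypothesis $0<p<2$ is exactly what makes $c_p$ finite at both endpoints of the $\xi$-integral, and the nonnegativity of the integrand removes any subtlety in interchanging expectation and integration.
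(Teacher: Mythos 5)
Your proposal is correct and follows essentially the same route as the paper: the scalar identity $|x|^p = c_p \int_{\R} (1-\cos(x\xi))\,|\xi|^{-(p+1)}\,\drm\xi$ via the substitution $u = x\xi$, followed by an interchange of expectation and integration and the identification $\mathbb{E}[\cos(\xi X)] = \Re(\Phi_X)(\xi)$. Your explicit appeal to Tonelli (rather than Fubini) and the remark on the finiteness of $c_p$ are slightly more careful versions of the same argument.
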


\begin{proof}
	For $p \in (0,2)$, we have, for every $x\in \R$,
	\begin{align}
		h(x) = \int_{\R} (1 - \cos (x\xi) ) \frac{\drm \xi}{|\xi|^{p +1}} = \left( \int_{\R} (1 - \cos (u) ) \frac{\drm u}{|u|^{p +1}} \right) |x|^p,
	\end{align}
	which is obtained by the change of variable $u = x \xi$. Applying this relation to $x=X$, we have, by Fubini's theorem and denoting $c_p = \left( \int_{\R} (1 - \cos (u) ) \frac{\drm u}{|u|^{p +1}} \right)^{-1}$, that
	\begin{align} 
		\mathbb{E} [|X|^p] & =c_p \mathbb{E} \left[ \int_{\R} (1 - \cos (\xi X) ) \frac{\drm \xi}{|\xi|^{p +1}} \right] \\
				& = c_p \int_{\R} ( 1 - \Re (\mathbb{E} [\mathrm{e}^{\mathrm{i} \xi X}] ) ) \frac{\drm \xi}{|\xi|^{p+1}} \\
				&= c_p \int_\R \frac{1-\Re ( \Phi_X ) (\xi)}{|\xi|^{p+1}} \drm \xi.
	\end{align}
\end{proof}

	\begin{theorem} \label{theo:moments}
	Consider a L\'evy white noise $w$ with  Blumenthal-Getoor indices $\beta_0$ and $\beta_\infty$.
	Then, for every $\varphi \in \S(\R^d)$ and $0<p<\beta_0$, we have the inequality
	\begin{equation} \label{eq:moments1}
		\mathbb{E} \left[  \lvert \langle w,\varphi \rangle\rvert^p \right]  \leq C \left(   \lVert \varphi \rVert_{\tilde{\beta}_0}^p+ \lVert \varphi \rVert_{\tilde{\beta}_\infty}^p   \right)
	\end{equation}	
	for some constant $C>0$, with $\tilde{\beta}_0 \in I_0$, $\tilde{\beta}_\infty \in I_\infty$, and $p< \tilde{\beta}_0, \tilde{\beta}_\infty$. 
Moreover, the result is still valid for $p=\tilde{\beta}_0 = \tilde{\beta}_\infty=2$
if $\beta_0=2 \in I_0$ (finite-variance case). 
	\end{theorem}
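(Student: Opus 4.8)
The plan is to reduce the bound on $\mathbb{E}[\lvert\langle w,\varphi\rangle\rvert^p]$ to the estimate on $\int_{\R^d}\lvert f(\varphi(\bm x))\rvert\,\drm\bm x$ furnished by Proposition \ref{prop:boundexponent}, using the fractional-moment formula of Proposition \ref{prop:fractionalmoment}. First I would compute the characteristic function of the scalar random variable $X=\langle w,\varphi\rangle$: by the very definition \eqref{eq:CFnoise} of a L\'evy white noise, for $\xi\in\R$ we have $\Phi_X(\xi)=\CF_w(\xi\varphi)=\exp\bigl(\int_{\R^d} f(\xi\varphi(\bm x))\,\drm\bm x\bigr)$. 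Hence, writing $g(\xi):=\int_{\R^d} f(\xi\varphi(\bm x))\,\drm\bm x$ (which is finite for all $\xi$ because $\varphi\in\S(\R^d)\subseteq L_{\tilde\beta_0}\cap L_{\tilde\beta_\infty}$ and by Proposition \ref{prop:boundexponent}), we get $1-\Re(\Phi_X)(\xi)=1-\ue^{\Re g(\xi)}\cos(\Im g(\xi))$. The elementary inequality $\lvert 1-\ue^{a}\cos b\rvert\le \lvert a\rvert + \lvert b\rvert \le 2\lvert a+\ui b\rvert$ valid for $a\le 0$ — and here $\Re g(\xi)\le 0$ since $f$ is a L\'evy exponent, so $\Re f\le 0$ — combined with $\lvert 1-\Re(\Phi_X)(\xi)\rvert\le 2$ always, gives the pointwise bound $1-\Re(\Phi_X)(\xi)\le C\min(1,\lvert g(\xi)\rvert)$.

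Next I would feed this into \eqref{eq:pthmoments}. Since $g(\xi)=\int f(\xi\varphi(\bm x))\,\drm\bm x$, Proposition \ref{prop:boundexponent} applied with the test function $\xi\varphi$ in place of $\varphi$ yields $\lvert g(\xi)\rvert\le C(\lvert\xi\rvert^{\tilde\beta_0}\lVert\varphi\rVert_{\tilde\beta_0}^{\tilde\beta_0}+\lvert\xi\rvert^{\tilde\beta_\infty}\lVert\varphi\rVert_{\tilde\beta_\infty}^{\tilde\beta_\infty})$. Therefore
\begin{equation}
\mathbb{E}[\lvert X\rvert^p]=c_p\int_{\R}\frac{1-\Re(\Phi_X)(\xi)}{\lvert\xi\rvert^{p+1}}\,\drm\xi\le C\int_{\R}\frac{\min\bigl(1,\ A\lvert\xi\rvert^{\tilde\beta_0}+B\lvert\xi\rvert^{\tilde\beta_\infty}\bigr)}{\lvert\xi\rvert^{p+1}}\,\drm\xi,
\end{equation}
where $A=\lVert\varphi\rVert_{\tilde\beta_0}^{\tilde\beta_0}$ and $B=\lVert\varphi\rVert_{\tilde\beta_\infty}^{\tilde\beta_\infty}$. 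Splitting the integral at the appropriate thresholds, one uses the numerator bound $\le 1$ for large $\lvert\xi\rvert$ (convergent since $p>0$) and the bound $\le A\lvert\xi\rvert^{\tilde\beta_0}+B\lvert\xi\rvert^{\tilde\beta_\infty}$ for small $\lvert\xi\rvert$ (convergent since $p<\tilde\beta_0$ and $p<\tilde\beta_\infty$); a scaling computation — substituting $\xi\mapsto \lambda\xi$ to balance the two regimes — then produces exactly $C(A^{p/\tilde\beta_0}+B^{p/\tilde\beta_\infty})=C(\lVert\varphi\rVert_{\tilde\beta_0}^{p}+\lVert\varphi\rVert_{\tilde\beta_\infty}^{p})$, which is \eqref{eq:moments1}. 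One should treat the case where $\tilde\beta_0=\tilde\beta_\infty$ separately (then there is a single power and the integral is a constant times $A^{p/\tilde\beta_0}$), and note that since $p<\beta_0=\sup I_0$ one can always choose $\tilde\beta_0\in I_0$ with $p<\tilde\beta_0$, and likewise $\tilde\beta_\infty\in I_\infty$ with $p<\tilde\beta_\infty$ because $\beta_\infty\le 2$ and $2\in I_\infty$ always.

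For the finite-variance endpoint $p=2$, Proposition \ref{prop:fractionalmoment} no longer applies, so I would instead argue directly: when $\beta_0=2\in I_0$ the noise has finite variance and $\mathbb{E}[\langle w,\varphi\rangle]=0$ (or a constant times $\int\varphi$, absorbed by a drift), while $\mathrm{Var}(\langle w,\varphi\rangle)=\sigma^2\lVert\varphi\rVert_2^2$ for some $\sigma^2\ge 0$, which one reads off by differentiating $\CF_w$ twice at $0$; this gives $\mathbb{E}[\lvert\langle w,\varphi\rangle\rvert^2]\le C\lVert\varphi\rVert_2^2$ directly, matching \eqref{eq:moments1} with $\tilde\beta_0=\tilde\beta_\infty=2$. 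I expect the main obstacle to be the bookkeeping in the scaling step — correctly handling the two competing powers $\tilde\beta_0$ and $\tilde\beta_\infty$, the threshold where $A\lvert\xi\rvert^{\tilde\beta_0}$ versus $B\lvert\xi\rvert^{\tilde\beta_\infty}$ dominates, and verifying that all the resulting integrals converge under the hypotheses $0<p<\tilde\beta_0,\tilde\beta_\infty\le 2$ — together with the minor technical point of justifying the pointwise estimate $1-\Re(\Phi_X)(\xi)\le C\min(1,\lvert g(\xi)\rvert)$ uniformly in $\xi$.
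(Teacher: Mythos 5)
Your proposal is correct and follows essentially the same route as the paper: compute $\Phi_{\langle w,\varphi\rangle}(\xi)=\exp(\int f(\xi\varphi))$, bound $1-\Re(\Phi_X)(\xi)$ pointwise via Proposition \ref{prop:boundexponent}, and feed this into the fractional-moment formula with a scaling argument, treating the finite-variance endpoint separately through the explicit second moment. The only cosmetic difference is that the paper channels the pointwise bound through $1-\mathrm{e}^{-|z|}$ (its Lemmas \ref{lemma:1}--\ref{lemma:3}) whereas you use the equivalent majorant $\min(1,|z|)$.
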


We   use  three simple lemmas in the proof of Theorem \ref{theo:moments}.

\begin{lemma} \label{lemma:1}
	There exists a constant $C>0$ such that, for every $z\in \C$ with $\Re ( z ) \leq 0$, we have that
	\begin{equation} \label{eq:boundexponential}
		|1 - \mathrm{e}^{z} | \leq C \left(1 - \mathrm{e}^{-\lvert z \rvert} \right). 
	\end{equation}
\end{lemma}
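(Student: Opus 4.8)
The plan is to reduce the bound to a one-variable estimate and argue by compactness plus an analysis of the behavior near $z=0$ and as $|z|\to\infty$. Write $z = x + \ui y$ with $x = \Re(z) \le 0$. The trivial bound $|1-\ue^z| \le 1 + \ue^x \le 2$ handles the region where $|z|$ is bounded away from $0$, since there $1 - \ue^{-|z|}$ is bounded below by a positive constant; so the only delicate region is a neighborhood of the origin. Near $z=0$ we have $|1-\ue^z| = |z| + O(|z|^2)$ and $1 - \ue^{-|z|} = |z| + O(|z|^2)$, so the ratio $\frac{|1-\ue^z|}{1-\ue^{-|z|}}$ tends to $1$ as $z \to 0$. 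Combining the two regimes, the ratio is bounded on the closed left half-plane minus the origin, which gives the claimed constant $C$.

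More concretely, first I would observe that for $\Re(z) \le 0$,
\begin{equation}
	|1 - \ue^z|^2 = (1-\ue^x\cos y)^2 + \ue^{2x}\sin^2 y = 1 - 2\ue^x \cos y + \ue^{2x}.
\end{equation}
Since $\ue^x \le 1$ and $|z|^2 = x^2 + y^2 \ge x^2$, one gets $1 - \ue^{-|z|} \ge 1 - \ue^{-|x|} = 1 - \ue^{x}$ when restricting to the imaginary axis is not enough, so instead I would keep both variables: bound $|1-\ue^z| \le |1 - \ue^x| + \ue^x|1 - \ue^{\ui y}| = (1-\ue^x) + \ue^x \cdot 2|\sin(y/2)| \le (1-\ue^x) + 2\ue^x \min(1, |y|/2)$. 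On the other side, $1 - \ue^{-|z|} \ge c_0 \min(1, |z|)$ for a universal $c_0 > 0$ (elementary), and $\min(1,|z|) \gtrsim \min(1,|x|) + \min(1,|y|)$ up to constants. Matching term by term, $(1-\ue^x) \le |x| \lesssim \min(1,|x|) \cdot \text{const}$ on $x\le 0$ after splitting $|x|\le 1$ and $|x|>1$ (in the latter case $1-\ue^x$ is bounded, so it is controlled by the constant $1$), and similarly $2\ue^x\min(1,|y|/2) \lesssim \min(1,|y|)$. This yields \eqref{eq:boundexponential}.

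The main obstacle, such as it is, is purely bookkeeping: one must handle the transition between the ``small $|z|$'' regime, where the inequality is essentially an equality and no slack is available, and the ``large $|z|$'' regime, where both sides are comparable to constants. A clean way to avoid case-chasing is to define $g(z) = |1-\ue^z| / (1 - \ue^{-|z|})$ on $\{\Re z \le 0\} \setminus \{0\}$, note it is continuous there, extends continuously to $z=0$ with value $1$ by the Taylor expansions above, and satisfies $g(z) \le 2/(1-\ue^{-|z|}) \to 1$ as $|z| \to \infty$ in the half-plane; hence $g$ is a continuous function on a space on which it is bounded near $0$, bounded near $\infty$, and bounded on any compact annulus, so $\sup g =: C < \infty$, which is exactly the claim. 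I expect this compactness-style argument to be the cleanest route and would present it in preference to the explicit term-by-term estimate.
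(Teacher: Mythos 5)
Your compactness argument with the ratio $g(z)=|1-\ue^z|/(1-\ue^{-|z|})$ --- continuous away from $0$, extended continuously by $g(0)=1$ via the Taylor expansions, and bounded for $|z|$ away from the origin using $|1-\ue^z|\le 2$ and $1-\ue^{-|z|}$ bounded below --- is essentially the paper's own proof, which works with the same ratio and the compact set $\{|z|\le\log 2,\ \Re(z)\le 0\}$. The only slip is that $2/(1-\ue^{-|z|})\to 2$, not $1$, as $|z|\to\infty$, but since only boundedness is needed the argument is unaffected.
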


\begin{proof}
First, for any $z \in \C$, we have that  $(1 - \mathrm{e}^{-|z|} ) \geq 0$. Consider the function 
\begin{equation}
h(z) = \frac{\vert 1 - \mathrm{e}^{z} \vert}{1- \mathrm{e}^{-\vert z \vert}},
\end{equation}
which is well-defined and continuous over $\C\backslash\{0\}$. Since $h(z)  {\rightarrow} 1$ when ${z \rightarrow 0}$, we extend $h$ continuously to $\C$ with $h(0) = 1$. 
For $z$ such that $\Re(z) \leq 0$ and $|z| \geq \log 2$, we have $(1 - \mathrm{e}^{-|z|}) \geq 1/2$ and $|1-\mathrm{e}^{z}| \leq 2$, hence
$0 \leq h(z) \leq 4$. 
Moreover, $$K = \{ z\in \C, \quad  |z| \leq \log 2 , \  \Re(z) \leq 0\}$$ is compact. Therefore, the continuous function $h$ is bounded on $K$. Finally, we have shown that $h$ is bounded on $\{ z \in \C, \Re(z) \leq 0\}$, which gives \eqref{eq:boundexponential}. 
\end{proof}

\begin{lemma} \label{lemma:2}
	Let $x,y \leq 1$. Then, $(1 - xy ) \leq (1-x) + (1-y)$.
\end{lemma}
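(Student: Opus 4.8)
The plan is to reduce the inequality to the observation that the gap between the two sides factors as a product of two nonnegative quantities. Concretely, I would start from the hypothesis $x,y\le 1$, which gives $1-x\ge 0$ and $1-y\ge 0$, hence $(1-x)(1-y)\ge 0$.

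Next I would expand this product: $(1-x)(1-y)=1-x-y+xy$. Rearranging the resulting inequality $1-x-y+xy\ge 0$ yields $2-x-y\ge 1-xy$, which is exactly $(1-x)+(1-y)\ge 1-xy$, i.e. the claim.

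There is essentially no obstacle here: the whole argument is the algebraic identity
\begin{equation}
  \big((1-x)+(1-y)\big)-(1-xy)=(1-x)(1-y),
\end{equation}
together with the sign information $(1-x)(1-y)\ge 0$ coming from $x,y\le 1$. The only thing worth double-checking is that the hypotheses used are exactly $x\le 1$ and $y\le 1$ (and not, say, $0\le x,y\le 1$); indeed only the upper bounds are needed to guarantee $1-x\ge 0$ and $1-y\ge 0$, and the lemma is applied in the proof of Theorem \ref{theo:moments} precisely in that generality.
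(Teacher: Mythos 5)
Your proof is correct and is essentially identical to the paper's: both rest on the identity $(1-x)(1-y)=(1-x)+(1-y)-(1-xy)$ together with the nonnegativity of $(1-x)(1-y)$ under the hypothesis $x,y\le 1$. Nothing further is needed.
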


\begin{proof}
	From the assumption, we know that $0 \leq (1-x) (1-y) = (1-x) + (1-y) - (1-xy)$ and the result follows.
\end{proof}

\begin{lemma} \label{lemma:3}
 For every $\alpha \in (0,2)$ and $p < \alpha$,  there exists a constant $c_{p,\alpha}$ such that
 \begin{equation}
	\int_{\R} \frac{1 - \mathrm{e}^{- | x \xi |^\alpha}} {|\xi|^{p+1}} \drm \xi = c_{p,\alpha} |x|^p
\end{equation}
for every $x \in \R$.
\end{lemma}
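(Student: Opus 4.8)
The plan is to reduce the integral to a scale-invariant form by a single change of variables, after which only the convergence of one $x$-independent integral needs to be checked. First I would dispose of the trivial case $x = 0$: the integrand then vanishes identically and both sides of the claimed identity are $0$, so we may assume $x \neq 0$. Since the integrand depends on $x$ only through $|x|$ (the quantities $|x\xi|^\alpha$, $|\xi|^{p+1}$ and the integration over all of $\R$ are all insensitive to the sign of $x$), we may also assume $x > 0$.

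For $x > 0$ I would substitute $u = x\xi$, so that $\drm \xi = \drm u / x$ and $|\xi|^{p+1} = |u|^{p+1} x^{-(p+1)}$. This gives
\[
	\int_{\R} \frac{1 - \ue^{-|x\xi|^\alpha}}{|\xi|^{p+1}} \drm \xi = x^{p} \int_{\R} \frac{1 - \ue^{-|u|^\alpha}}{|u|^{p+1}} \drm u ,
\]
which is exactly the asserted identity with $c_{p,\alpha} := \int_{\R} (1 - \ue^{-|u|^\alpha}) |u|^{-p-1} \drm u$. Crucially, this constant no longer involves $x$.

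It then remains to verify that $c_{p,\alpha}$ is finite (it is clearly positive, the integrand being nonnegative and not identically zero), and here I would split the integral at $|u| = 1$. On $\{ |u| \leq 1\}$, the elementary inequality $1 - \ue^{-t} \leq t$ for $t \geq 0$ yields $1 - \ue^{-|u|^\alpha} \leq |u|^{\alpha}$, so the integrand is dominated by $|u|^{\alpha - p - 1}$, which is integrable near the origin precisely because $\alpha - p - 1 > -1$, i.e. because $p < \alpha$. On $\{ |u| \geq 1\}$, bounding $1 - \ue^{-|u|^\alpha} \leq 1$ shows the integrand is dominated by $|u|^{-p-1}$, which is integrable at infinity because $p > 0$ (the positivity of $p$ being implicit in the setting, where $0 < p < 2$). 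Hence $c_{p,\alpha} < \infty$, completing the proof.

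There is no real obstacle in this argument; the only points demanding attention are matching the two integrability thresholds at $0$ and at $\infty$ with the hypotheses $0 < p < \alpha$, and recording explicitly that the constant produced by the change of variables does not depend on $x$, which is what makes the closed form $c_{p,\alpha}|x|^p$ meaningful.
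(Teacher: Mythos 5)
Your proof is correct and follows the same route as the paper, whose entire argument is the change of variable $u = x\xi$; you simply spell out the scaling computation and the finiteness of the constant (split at $|u|=1$, using $1-\ue^{-t}\le t$ near $0$ and $1-\ue^{-t}\le 1$ at infinity, which is where $0<p<\alpha$ enters), details the paper leaves implicit.
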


\begin{proof}
The result is deduced by the change of variable $u = x \xi$ in the integral.
\end{proof}

\begin{proof}[Proof of Theorem \ref{theo:moments}]
Defining $X = \langle w,\varphi \rangle$, the characteristic function of $X$ is
\begin{equation}
	\Phi_X(\xi) = \exp \left( \int_{\R^d} f(\xi \varphi (\bm{x}) ) \drm \bm{x}\right).
\end{equation}
Moreover, from Proposition \ref{prop:boundexponent}, we have that
\begin{equation} \label{eq:boundgeneralizedexponent}
\int_{\R^d} |f(\xi \varphi)| \leq C   \left( \lVert \varphi \rVert_{\tilde{\beta}_0}^{\tilde{\beta}_0} \lvert \xi \rvert^{\tilde{\beta}_0}   + \lVert \varphi \rVert_{\tilde{\beta}_\infty}^{\tilde{\beta}_\infty} \lvert \xi \rvert^{\tilde{\beta}_\infty}  \right).
\end{equation}
We have, therefore, that
\begin{align}
	1 - \Re (\Phi_X )(\xi) &\leq | 1 - \Phi_X(\xi) | \nonumber \\
		& \overset{(i)}{\leq} C \left( 1 - \exp \left( - \left\lvert \int f(\xi \varphi) \right\lvert \right) \right)  \nonumber \\
		& \overset{(ii)}{\leq} C \left( 1 - \exp \left( -  \int | f(\xi \varphi) | \right) \right) \nonumber  \\
		& \overset{(iii)}{\leq} C' \left( 1 - \exp(  - \lVert \varphi \rVert_{\tilde{\beta}_0}^{\tilde{\beta}_0} \lvert \xi \rvert^{\tilde{\beta}_0}  ) \exp(-  \lVert \varphi \rVert_{\tilde{\beta}_\infty}^{\tilde{\beta}_\infty} \lvert \xi \rvert^{\tilde{\beta}_\infty} )  \right) \nonumber \\
		&  \overset{(iv)}{\leq} C' \left(  \left( 1 - \exp(  - \lVert \varphi \rVert_{\tilde{\beta}_0}^{\tilde{\beta}_0} \lvert \xi \rvert^{\tilde{\beta}_0}  )  \right) +  \left( 1 - \exp(-  \lVert \varphi \rVert_{\tilde{\beta}_\infty}^{\tilde{\beta}_\infty} \lvert \xi \rvert^{\tilde{\beta}_\infty} )  \right) \right),  \label{eq:interm}  
\end{align}
where (i) comes from Lemma \ref{lemma:1}, (ii) and (iii) from the fact that $x \mapsto 1 - \mathrm{e}^{-x}$ is increasing, (iii) from \eqref{eq:boundgeneralizedexponent}, and (iv) from Lemma \ref{lemma:2}. 
Finally, by applying Lemma \ref{lemma:3} with $x = \lVert \varphi \rVert_{\tilde{\beta}_0},\alpha = \tilde{\beta}_0$  and $x = \lVert \varphi \rVert_{\tilde{\beta}_\infty}, \alpha = \tilde{\beta}_\infty$, respectively, we deduce using \eqref{eq:pthmoments}  that

	\begin{equation}
		\mathbb{E} [|X|^p] = c_p \int_{\R} \frac{1-\Re( \Phi_X ) (\xi)}{|\xi|^{p+1}} \drm \xi \leq  C''\left(  \lVert \varphi \rVert_{\tilde{\beta}_0}^p+ \lVert \varphi \rVert_{\tilde{\beta}_\infty}^p \right),
	\end{equation}
	ending the proof. 
	
	The finite-variance case, when $\beta_0 = 2 \in I_0$, cannot be deduced with the same arguments, since \eqref{eq:pthmoments} is not valid any more. However, we know in this case that 
	\begin{equation}
	\mathbb{E}[\langle w, \varphi \rangle^2] = \sigma^2 \lVert \varphi \rVert_2^2 + \mu^2 \left( \int_{\R^d} \varphi \right)^2 \leq  \sigma^2 \lVert \varphi \rVert_2^2 + \mu^2 \rVert \varphi \lVert_1^2,
	\end{equation}
	where $\sigma^2$ and $\mu$ are   the variance and the mean of the infinitely divisible random variable with the same L\'evy exponent as $w$ \cite[Proposition 4.15]{Unser2014sparse}, respectively. Hence, the result is still valid.	
\end{proof}

We  take advantage of Theorem \ref{theo:moments} in a slightly less general form and apply it to wavelets, which are functions that are rescaled versions of an initial function.Corollary \ref{coro:momentbound} is a useful consequence of Theorem \ref{theo:moments}. For $\varphi \in \S(\R^d)$, $j \geq 0$, and $\bm{m} \in \Z^d$, we set $\varphi_{j,k} = 2^{jd/2} \varphi( 2^j \cdot - \bm{m})$. 

\begin{corollary} \label{coro:momentbound}
	Let $w$ be a L\'evy white noise with indices $\beta_0$ and $\beta_\infty$. 
	We assume either that $\beta_\infty < \beta_0$, or that $\beta_\infty = \beta_0 \in I_\infty \bigcap I_0$. 
	We fix $p < \beta \in I_0 \bigcap I_\infty$. 
	Then, there exists a constant $C$ such that, for every $\varphi \in \S(\R^d)$, $j \geq 0$, and $\bm{m}\in \Z^d$, 
	\begin{equation} \label{eq:boundwavelets}
		\mathbb{E} \left[  \lvert \langle w, \varphi_{j,\bm{m}}  \rangle\rvert^p \right]  \leq C 2^{jdp( 1/2 - 1/\beta)} \lVert \varphi \rVert_{\beta}^{p}.
	\end{equation}
	Moreover, this result is still valid if $p = \beta = 2 \in I_0$.
\end{corollary}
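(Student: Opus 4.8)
The plan is to apply Theorem \ref{theo:moments} directly to the rescaled test function $\varphi_{j,\bm{m}}$ and then track how the two Besov-type seminorms $\lVert \cdot \rVert_{\tilde\beta_0}$ and $\lVert \cdot \rVert_{\tilde\beta_\infty}$ behave under the dyadic dilation and integer shift. First I would note that the assumptions ($\beta_\infty < \beta_0$ with $\beta$ chosen strictly between, or $\beta_\infty = \beta_0 = \beta \in I_0\cap I_\infty$) guarantee that we may take $\tilde\beta_0 = \tilde\beta_\infty = \beta$ in Theorem \ref{theo:moments}, with $p < \beta$; in the finite-variance borderline we instead take $p = \beta = 2 \in I_0$, which is covered by the last sentence of the theorem. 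Hence for every $j \geq 0$ and $\bm m \in \Z^d$,
\begin{equation*}
	\mathbb{E}\left[ \lvert \langle w, \varphi_{j,\bm{m}} \rangle \rvert^p \right] \leq C \lVert \varphi_{j,\bm{m}} \rVert_\beta^p ,
\end{equation*}
where I have merged the two identical terms into one (absorbing the factor $2$ into $C$). It remains to show $\lVert \varphi_{j,\bm{m}} \rVert_\beta = 2^{jd(1/2 - 1/\beta)} \lVert \varphi \rVert_\beta$.

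The second step is the scaling computation for the $L_\beta$-(quasi-)norm. Since $\varphi_{j,\bm{m}}(\bm x) = 2^{jd/2}\varphi(2^j \bm x - \bm m)$, a change of variables $\bm y = 2^j \bm x - \bm m$ gives
\begin{equation*}
	\lVert \varphi_{j,\bm{m}} \rVert_\beta^\beta = \int_{\R^d} 2^{jd\beta/2} \lvert \varphi(2^j \bm x - \bm m) \rvert^\beta \drm \bm x = 2^{jd\beta/2} \cdot 2^{-jd} \int_{\R^d} \lvert \varphi(\bm y)\rvert^\beta \drm \bm y = 2^{jd(\beta/2 - 1)} \lVert \varphi \rVert_\beta^\beta .
\end{equation*}
Taking the $\beta$-th root yields $\lVert \varphi_{j,\bm{m}} \rVert_\beta = 2^{jd(1/2 - 1/\beta)} \lVert \varphi \rVert_\beta$, and hence
\begin{equation*}
	\lVert \varphi_{j,\bm{m}} \rVert_\beta^p = 2^{jdp(1/2 - 1/\beta)} \lVert \varphi \rVert_\beta^p ,
\end{equation*}
which combined with the moment bound above gives \eqref{eq:boundwavelets}. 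Note that the shift $\bm m$ disappears entirely because $L_\beta$ is translation-invariant, so the constant $C$ is genuinely uniform in both $j$ and $\bm m$; one must also check that $\varphi_{j,\bm{m}} \in \S(\R^d)$, which is immediate since dilation and translation preserve the Schwartz class.

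There is essentially no obstacle here — the corollary is a bookkeeping consequence of Theorem \ref{theo:moments}. The only points that deserve a word of care are: (1) verifying that the hypotheses legitimately allow the common choice $\tilde\beta_0 = \tilde\beta_\infty = \beta$ (this is exactly why the statement assumes $\beta \in I_0 \cap I_\infty$, and why the two regimes $\beta_\infty < \beta_0$ and $\beta_\infty = \beta_0$ are singled out), and (2) handling the endpoint $p = \beta = 2$ by invoking the finite-variance addendum of Theorem \ref{theo:moments} rather than the fractional-moment formula \eqref{eq:pthmoments}, since the latter requires $p < 2$ strictly. Everything else is the elementary dilation identity for $L_\beta$ norms.
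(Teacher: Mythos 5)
Your proof is correct and follows essentially the same route as the paper: apply Theorem \ref{theo:moments} with $\tilde\beta_0=\tilde\beta_\infty=\beta$ (the hypotheses guaranteeing $\beta\in I_0\cap I_\infty$ makes this legitimate), then use the change of variables $\bm y = 2^j\bm x-\bm m$ to get $\lVert\varphi_{j,\bm m}\rVert_\beta^p = 2^{jdp(1/2-1/\beta)}\lVert\varphi\rVert_\beta^p$, with the case $p=\beta=2\in I_0$ handled by the finite-variance part of the theorem. No gaps; the bookkeeping and uniformity in $j,\bm m$ are exactly as in the paper's argument.
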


\begin{proof}
	Remark first that the assumptions on $\beta_0$ and $\beta_\infty$ imply that $I_0 \bigcap I_\infty \neq \emptyset$.
	We apply Theorem \ref{theo:moments} with $\tilde{\beta}_\infty = \tilde{\beta}_0 = \beta$. In particular, we have that $\mathbb{E} \left[  \lvert \langle w,\varphi_{j,\bm{m}} \rangle\rvert^p \right] \leq C   \lVert \varphi_{j,\bm{m}}  \rVert_{\beta}^{p}$. The result follows from the relation
	\begin{equation}
		  \lVert \varphi_{j,\bm{m}}  \rVert_{\beta}^{p} = 2^{jdp/2} \left( \int_{\R^d} \lvert \varphi ( 2^j \bm{x} -\bm{m} )  \rvert^\beta \drm \bm{x} \right)^{p/\beta} =  2^{jdp( 1/2 - 1/\beta)} \lVert \varphi \rVert_{\beta}^{p},
	\end{equation}
the last equality being obtained by the change of variable $\bm{y} = 2^{j} \bm{x} -\bm{m}$. The result is still valid for $p = \beta = 2$ for which we can still apply Theorem \ref{theo:moments}.
\end{proof}

	%%%%%%%%%%%%%%%%%%%%%%%%%%%%%%%%%%%%%%%%%%%%%%%%%%%%%%%%%%%%%%		
	\subsection{Application of Moment Estimates to the Extension of $\langle w, \varphi\rangle$ for Non-Smooth Functions}
	%%%%%%%%%%%%%%%%%%%%%%%%%%%%%%%%%%%%%%%%%%%%%%%%%%%%%%%%%%%%%%		

A generalized random process $s$ is a random variable from $\Omega$ to $\S'(\R^d)$. Alternatively, it can be seen as a linear and continuous map from $\S(\R^d)$ to the space $L_0(\Omega)$  of real random variables, that associates to $\varphi \in \S(\R^d)$ the random variable $\langle s,\varphi \rangle$\footnote{This is not as obvious as it might seem in infinite-dimension, and is again due to the nuclear structure of $\S(\R^d)$. For the links between $E'$-valued random variables and linear functionals from $E$ to $L_0(\Omega)$ (with $E'$ the dual of $E$), see \cite[Section 2.3]{Ito1984foundations}, in particular Theorems 2.3.1 and 2.3.2.}. The space $L_0(\Omega)$ is a Fr\'echet space associated with the following notion of convergence. A sequence $(X_n)$ converges to $X$ in $L_0(\Omega)$ if and only if  $\lVert X_n - X \rVert_{L_0(\Omega)} = \mathbb{E} [\min (\lvert X_n - X \rvert , 1)] \underset{n\rightarrow \infty}{\longrightarrow} 0$. The topology on $L_0(\Omega)$ corresponds to the convergence in probability. We also define the spaces $L_p(\Omega)$ for $0<p<\infty$ associated for $p\geq 1$ ($p<1$, respectively) with the  norm (the quasi-norm, respectively) $\lVert X\rVert_{L_p(\Omega)} = \mathbb{E}[\lvert X\rvert^p]$. See \cite[Section 2.2]{Ito1984foundations} for more details.

To measure the Besov regularity of L\'evy white noises, we shall consider random variables $\langle w,\varphi \rangle$ for test functions $\varphi$ not in $\S(\R^d)$. We handle this by extending  the domain of test functions through which one can observe a generalized random process.

\begin{lemma} \label{lemma:extendsvarphi}
	Let $0<p,\beta <\infty$. Consider a generalized random process $s$. We assume that, for all $\varphi \in \S(\R^d)$,
	\begin{equation} \label{eq:boundmoment}
		\mathbb{E} [\lvert \langle  s , \varphi \rangle \rvert^p] \leq C \lVert \varphi \rVert_{\beta}^p
	\end{equation}
	for some constant $C>0$. Then, we can extend $s$ as a linear and continuous map from $L_{\beta}(\R^d)$ to $L_0(\Omega)$. Moreover,   \eqref{eq:boundmoment} remains valid for $\varphi \in L_{\beta}(\R^d)$.
\end{lemma}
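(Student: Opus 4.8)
The plan is to exploit the density of $\S(\R^d)$ in $L_\beta(\R^d)$ together with the moment bound \eqref{eq:boundmoment} to show that $s$ is uniformly continuous on a dense subset, hence extends uniquely. First I would fix $\varphi \in L_\beta(\R^d)$ and pick a sequence $(\varphi_n)$ in $\S(\R^d)$ with $\varphi_n \to \varphi$ in $L_\beta(\R^d)$; such a sequence exists because $\S(\R^d)$ is dense in $L_\beta(\R^d)$ for every $0<\beta<\infty$. Applying \eqref{eq:boundmoment} to the difference $\varphi_n - \varphi_m \in \S(\R^d)$ gives
\begin{equation}
	\mathbb{E}\bigl[ \lvert \langle s, \varphi_n \rangle - \langle s, \varphi_m \rangle \rvert^p \bigr] \leq C \lVert \varphi_n - \varphi_m \rVert_\beta^p,
\end{equation}
so $(\langle s, \varphi_n \rangle)$ is Cauchy in $L_p(\Omega)$. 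Since $L_p(\Omega)$ is complete, the sequence converges to some $X \in L_p(\Omega)$, and convergence in $L_p(\Omega)$ implies convergence in $L_0(\Omega)$ (in probability). A standard argument shows the limit does not depend on the approximating sequence: interlacing two such sequences produces another Cauchy sequence, so all limits agree. We define $\langle s, \varphi \rangle := X$.

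Next I would check the claimed properties of this extension. Linearity follows by approximating $\varphi + \lambda \psi$ by $\varphi_n + \lambda \psi_n$ and passing to the limit in probability, using that addition and scalar multiplication are continuous on $L_0(\Omega)$. For the moment bound, I would take the limit in \eqref{eq:boundmoment} along $(\varphi_n)$: since $\langle s, \varphi_n \rangle \to \langle s, \varphi \rangle$ in $L_p(\Omega)$ we have $\mathbb{E}[\lvert \langle s, \varphi_n \rangle \rvert^p] \to \mathbb{E}[\lvert \langle s, \varphi \rangle \rvert^p]$ (convergence of $L_p$ norms follows from convergence in $L_p$), while $\lVert \varphi_n \rVert_\beta^p \to \lVert \varphi \rVert_\beta^p$; hence $\mathbb{E}[\lvert \langle s, \varphi \rangle \rvert^p] \leq C \lVert \varphi \rVert_\beta^p$ for all $\varphi \in L_\beta(\R^d)$. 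Continuity of $s : L_\beta(\R^d) \to L_0(\Omega)$ is then immediate: if $\varphi_n \to \varphi$ in $L_\beta(\R^d)$, the extended bound gives $\mathbb{E}[\lvert \langle s, \varphi_n - \varphi \rangle \rvert^p] \leq C \lVert \varphi_n - \varphi \rVert_\beta^p \to 0$, so $\langle s, \varphi_n \rangle \to \langle s, \varphi \rangle$ in $L_p(\Omega)$ and a fortiori in $L_0(\Omega)$.

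I do not expect a serious obstacle here; the argument is the routine "bounded linear map extends from a dense subspace" principle, with the only care needed being that the target $L_0(\Omega)$ is merely a metric (Fr\'echet) space, not Banach, which is why one works in $L_p(\Omega)$ for the Cauchy/completeness step and only transfers to $L_0(\Omega)$ at the end. One should also note, when $0<p<1$, that $\lVert \cdot \rVert_{L_p(\Omega)}$ is only a quasi-norm, so the triangle inequality used in the Cauchy estimate should be replaced by the inequality $\mathbb{E}[\lvert X+Y\rvert^p] \leq \mathbb{E}[\lvert X\rvert^p] + \mathbb{E}[\lvert Y\rvert^p]$ (valid for $0<p\le 1$), which suffices for all the estimates above; for $p \geq 1$ the ordinary triangle inequality applies. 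The well-definedness (independence of the approximating sequence) and the closedness of the extension under the linear operations are the only points requiring a line of verification.
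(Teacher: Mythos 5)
Your proposal is correct and follows essentially the same route as the paper's proof: density of $\S(\R^d)$ in $L_\beta(\R^d)$, the Cauchy/completeness argument in $L_p(\Omega)$ via the moment bound, independence of the approximating sequence, and passing to the limit to retain the bound. The only cosmetic difference is that you verify uniqueness by interlacing sequences while the paper uses an explicit three-term estimate with the constant $C_p$; both are standard and equivalent.
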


Before proving this result, we remark that it immediately  implies Corollary \ref{coro:extendedcoro1}.

\begin{corollary} \label{coro:extendedcoro1}
Under the conditions of Corollary \ref{coro:momentbound}, we can extend $\langle w,\varphi \rangle$ for $\varphi \in L_{\beta}(\R^d)$. Moreover, \eqref{eq:boundwavelets}  remains valid for any $\varphi \in L_{\beta}(\R^d)$. 
\end{corollary}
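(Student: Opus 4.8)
The statement to prove is Corollary \ref{coro:extendedcoro1}, which asserts that under the conditions of Corollary \ref{coro:momentbound}, the pairing $\langle w, \varphi\rangle$ extends to $\varphi \in L_\beta(\R^d)$ with the bound \eqref{eq:boundwavelets} preserved. As the text itself notes, this is an immediate consequence of Lemma \ref{lemma:extendsvarphi} (the general extension lemma) applied to the specific moment bound of Corollary \ref{coro:momentbound}. So the plan is essentially to (1) check the hypotheses of Lemma \ref{lemma:extendsvarphi} are met, and (2) track the constant through the rescaling to recover \eqref{eq:boundwavelets}.

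Concretely, I would argue as follows. Fix a L\'evy white noise $w$ satisfying the assumptions of Corollary \ref{coro:momentbound}, and fix $p < \beta \in I_0 \cap I_\infty$ (or $p = \beta = 2 \in I_0$). By Theorem \ref{theo:moments} applied with $\tilde\beta_0 = \tilde\beta_\infty = \beta$, there is a constant $C>0$ such that $\mathbb{E}[\lvert \langle w,\psi\rangle\rvert^p] \leq C\lVert \psi\rVert_\beta^p$ for \emph{every} $\psi \in \S(\R^d)$; this is exactly hypothesis \eqref{eq:boundmoment} of Lemma \ref{lemma:extendsvarphi} with $s = w$. Hence Lemma \ref{lemma:extendsvarphi} applies verbatim: $w$ extends to a linear continuous map $L_\beta(\R^d) \to L_0(\Omega)$, and the bound $\mathbb{E}[\lvert\langle w,\psi\rangle\rvert^p]\leq C\lVert\psi\rVert_\beta^p$ persists for all $\psi \in L_\beta(\R^d)$. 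Now, for any $\varphi \in L_\beta(\R^d)$, $j\geq 0$, $\bm m \in \Z^d$, the rescaled-shifted function $\varphi_{j,\bm m} = 2^{jd/2}\varphi(2^j\cdot - \bm m)$ again belongs to $L_\beta(\R^d)$, so $\langle w,\varphi_{j,\bm m}\rangle$ is well-defined and
\begin{equation*}
  \mathbb{E}\bigl[\lvert\langle w,\varphi_{j,\bm m}\rangle\rvert^p\bigr] \leq C\lVert\varphi_{j,\bm m}\rVert_\beta^p = C\,2^{jdp(1/2-1/\beta)}\lVert\varphi\rVert_\beta^p,
\end{equation*}
the last equality being the same change of variables $\bm y = 2^j\bm x - \bm m$ used in the proof of Corollary \ref{coro:momentbound}. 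This is precisely \eqref{eq:boundwavelets} for $\varphi \in L_\beta(\R^d)$.

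I do not expect any genuine obstacle here; the only point requiring a little care is bookkeeping. One must note that the extension of $w$ provided by Lemma \ref{lemma:extendsvarphi} is unambiguous — it does not depend on the approximating sequence — and that the rescaling map $\varphi \mapsto \varphi_{j,\bm m}$ sends $L_\beta(\R^d)$ into itself continuously, so that the inequality of Corollary \ref{coro:momentbound}, originally established for Schwartz $\varphi$, passes to the closure. Since both sides of \eqref{eq:boundwavelets} are continuous in $\varphi$ with respect to the $L_\beta$-(quasi-)norm (the left side by continuity of $w: L_\beta \to L_0(\Omega)$ together with Fatou's lemma, the right side trivially), a density argument alone would also suffice, but invoking Lemma \ref{lemma:extendsvarphi} directly is cleaner. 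The finite-variance edge case $p=\beta=2$ is covered identically, since Theorem \ref{theo:moments} and hence Lemma \ref{lemma:extendsvarphi} apply there too.
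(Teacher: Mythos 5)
Your argument is correct and follows exactly the route the paper intends: the paper simply remarks that Corollary \ref{coro:extendedcoro1} is an immediate consequence of Lemma \ref{lemma:extendsvarphi} applied to the moment bound of Theorem \ref{theo:moments} with $\tilde{\beta}_0=\tilde{\beta}_\infty=\beta$, followed by the same rescaling identity used for Corollary \ref{coro:momentbound}. Your additional bookkeeping (well-definedness of the extension, the $p=\beta=2$ case, and the change of variables for general $\varphi\in L_\beta(\R^d)$) is accurate and consistent with the paper.
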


\begin{proof}[Proof of Lemma \ref{lemma:extendsvarphi}] 
Fix $\varphi \in L_\beta(\R^d)$. 
Let $(\varphi_n)$ be a sequence of elements in $\S(\R^d)$ such that 
$\varphi_n \underset{n \rightarrow \infty}{\longrightarrow} \varphi$ in $L_\beta(\R^d)$.
Such a sequence exists by density of $\S(\R^d)$ into $L_{\beta}(\R^d)$, well-known for $\beta \geq 1$, and easily extended for $0<\beta<1$. 
Then, for every $n,m \in \N$, it holds that
\begin{align*}
	\mathbb{E} [ \lvert \langle s,\varphi_n\rangle -  \langle s,\varphi_m\rangle  \lvert^p ] &= \mathbb{E} [ \lvert \langle s,\varphi_n - \varphi_m\rangle  \lvert^p ] \\
					&\leq C \lVert \varphi_n - \varphi_m \rVert^p_{\beta},
\end{align*}
so that $(\langle s, \varphi_n\rangle )$ is a Cauchy sequence in the complete space $L_p(\Omega)$. It therefore converges to a random variable $X$ in $L_p(\Omega)$.
Moreover, this limit does not depend on the sequence $(\varphi_n)$ chosen to approximate $\varphi$. Indeed, if $(\tilde{\varphi}_n)$ is another sequence converging to $\varphi$ in $L_{\beta}(\R^d)$, and defining $\tilde{X}$ as  the limit of $\langle s,\tilde{\varphi}_n\rangle$ in $L_p(\Omega)$, we have that
\begin{equation} \label{eq:splitXXtilde}
	\mathbb{E}[ \lvert X - \tilde{X} \rvert^p ]  = \mathbb{E} [ \lvert X - \langle s, \varphi_n \rangle +  \langle s, \varphi_n - \tilde{\varphi}_n \rangle +  \langle s, \tilde{\varphi}_n \rangle - \tilde{X} \rvert^p ] 
\end{equation} 
We have moreover, for $x,y,z \in \R$, that
\begin{align*}
	|x + y + z|^p \leq \begin{cases}
					3^{p-1} \left( |x|^p + |y|^p + |z|^p  \right) & \text{if} \  p \geq 1, \\
					 |x|^p + |y|^p + |z|^p  & \text{otherwise.}
				\end{cases}					 
\end{align*}
Hence, if we set  $C_p = 3^{p-1}$ if $p\geq 1$ and $C_p = 1$ if $p<1$, we deduce from \eqref{eq:splitXXtilde} that
\begin{equation} \label{eq:splitXXtilde2}
	\mathbb{E}[ \lvert X - \tilde{X} \rvert^p ]  \leq C_p \left(  \mathbb{E} [ \lvert X - \langle s, \varphi_n \rangle \rvert^p ]  + \mathbb{E} [  \langle s, \varphi_n- \tilde{\varphi}_n \rangle   \rvert^p ] + \mathbb{E} [   \langle s, \tilde{\varphi}_n \rangle -  \tilde{X} \rvert^p ] \right).
\end{equation} 
 The three terms in \eqref{eq:splitXXtilde2} vanish when $n\rightarrow \infty$. For the second one, we   use that $ \mathbb{E} [  \langle s, \varphi_n- \tilde{\varphi}_n \rangle   \rvert^p ] \leq C \lVert \varphi_n - \tilde{\varphi}_n \rVert_{\beta}^p$. This proves that $X = \tilde{X}$ almost surely.  We therefore define $X = \langle s,\varphi\rangle$.

Finally, \eqref{eq:boundmoment} is still valid for $\varphi \in L_{\beta}(\R^d)$ by continuity of $s$ from $L_\beta(\R^d)$ to $L_p(\Omega)$.
\end{proof}

%%%%%%%%%%%%%%%%%%%%%%%%%%%%%%%%%%%%%%%%%%%%%%%%%%%%%%%%%%%%%%%%
\section{Measurability of Weighted Besov Spaces} \label{sec:measurable}
%%%%%%%%%%%%%%%%%%%%%%%%%%%%%%%%%%%%%%%%%%%%%%%%%%%%%%%%%%%%%%%%

A generalized random process is a measurable function from $\Omega$ to $\S'(\R^d)$, endowed with the cylindrical $\sigma$-field $ \mathcal{B}_c(\S'(\R^d))$.
In the next sections, we  investigate  in which Besov space (local or weighted) is a given L\'evy white noise. 
We show here that this question is meaningful in the   sense that any Besov space $B_{p,q}^{\tau} (\R^d;\mu)$ is measurable in $\S'(\R^d)$. 

\begin{proposition} \label{prop:weighted_measurability}
For every $0<p,q\leq \infty$ and $\tau, \mu \in \R$, we have that
\begin{equation} \label{eq:BpqinBc}
	B_{p,q}^{\tau}(\R^d;\mu) \in \mathcal{B}_c(\S'(\R^d)).
\end{equation}
\end{proposition}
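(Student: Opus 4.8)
The plan is to show that $B_{p,q}^{\tau}(\R^d;\mu)$ is obtained from the cylindrical $\sigma$-field by a countable sequence of measurable operations, exploiting the wavelet characterization from Definition \ref{def:besovclassical} and Proposition \ref{prop:sobotobesov}. The essential point is that the wavelet coefficients $u \mapsto \langle u, \psi_{j,G,\bm{m}}\rangle$ are, for each fixed $(j,G,\bm{m})$, cylindrical-measurable maps from $\S'(\R^d)$ to $\R$ — but only \emph{after} we know the pairing makes sense, i.e.\ only on a set where $u$ is regular enough that $\langle u,\psi_{j,G,\bm{m}}\rangle$ is defined (the wavelets are not in $\S(\R^d)$, only compactly supported with finite smoothness). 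So the first step is to handle this domain issue.

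First I would decompose $\S'(\R^d) = \bigcup_{n\in\N} W_2^{-n}(\R^d;-n)$ using the countable inductive limit \eqref{eq:inductive} from Proposition \ref{property_sobolev}. On each piece $W_2^{-n}(\R^d;-n)$ the wavelet coefficients $\langle u,\psi_{j,G,\bm{m}}\rangle$ are well-defined provided the wavelet has regularity $u$ large enough (as in \eqref{eq:conditionu} of Proposition \ref{prop:sobotobesov}, with $\tau_0 = -n$, $\mu_0=-n$); note however that the Besov space $B_{p,q}^\tau(\R^d;\mu)$ is defined with a \emph{fixed} wavelet regularity tied to $\tau,p$ only, so to make the argument uniform in $n$ I would instead write $B_{p,q}^\tau(\R^d;\mu) = \bigcup_{n} \big( B_{p,q}^\tau(\R^d;\mu) \cap W_2^{-n}(\R^d;-n)\big)$ and on each such intersection invoke Proposition \ref{prop:sobotobesov}: membership in the Besov space is equivalent to the explicit series condition \eqref{eq:sobotobesov}, evaluated with a wavelet of regularity $u$ chosen to satisfy \eqref{eq:conditionu} for that particular $n$. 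Thus it suffices to show (a) that $W_2^{-n}(\R^d;-n)$ is cylindrical-measurable, (b) that on $W_2^{-n}(\R^d;-n)$ the map $u\mapsto\langle u,\psi_{j,G,\bm{m}}\rangle$ is cylindrical-measurable for each $(j,G,\bm{m})$, and (c) that the series condition \eqref{eq:sobotobesov} then defines a measurable subset (a countable combination of measurable real functions via sums, suprema, and the set where the total is finite — all standard).

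For (a) and (b): the Sobolev space $W_2^{-n}(\R^d;-n)$ is a separable Hilbert space continuously embedded in $\S'(\R^d)$, and its norm can be written as a countable supremum of cylindrical functionals — e.g.\ using an orthonormal basis $(e_k)$ of the dual $W_2^{n}(\R^d;n)$ consisting of Schwartz functions (possible since $\S(\R^d)$ is dense in $W_2^n(\R^d;n)$ by \eqref{eq:projective}), one has $\lVert u\rVert^2_{W_2^{-n}(\R^d;-n)} = \sum_k |\langle u,e_k\rangle|^2$, so $\{u : \lVert u\rVert_{W_2^{-n}(\R^d;-n)}<\infty\}$ (suitably interpreted — really the set where $u$ extends continuously to $W_2^n(\R^d;n)$) is cylindrical-measurable. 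Then for a wavelet $\psi_{j,G,\bm{m}} \in W_2^n(\R^d;n)$ one approximates it in $W_2^n$-norm by Schwartz functions $\varphi_\ell$; on $W_2^{-n}(\R^d;-n)$ the pairings $\langle u,\varphi_\ell\rangle$ converge to $\langle u,\psi_{j,G,\bm{m}}\rangle$ pointwise, exhibiting the latter as a pointwise limit of cylindrical functionals, hence measurable.

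The main obstacle I anticipate is precisely this matching of the fixed-wavelet-regularity convention in Definition \ref{def:besovclassical} against the variable regularity $n$ needed for the inductive-limit decomposition — one must be careful that the equivalence "$f\in B_{p,q}^\tau$ iff \eqref{eq:sobotobesov} holds" is applied with a legitimate wavelet for each stratum, and that different strata yield consistent answers (which they do, since the Besov norm is independent of the admissible wavelet chosen). Once that bookkeeping is in place, everything else is routine countable measure-theoretic manipulation: $B_{p,q}^\tau(\R^d;\mu)$ is a countable union over $n$ of sets, each of which is the intersection of the cylindrical-measurable set $W_2^{-n}(\R^d;-n)$ with the measurable set $\{$the series in \eqref{eq:sobotobesov} converges$\}$, and the latter is built from countably many measurable real-valued functions $u\mapsto|\langle u,\psi_{j,G,\bm{m}}\rangle|$ by arithmetic operations, countable sums, and the map to $[0,\infty]$, finishing with $\{\,\cdot<\infty\,\}$.
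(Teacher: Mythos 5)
Your proof works and rests on the same two essential ingredients as the paper's: (i) weighted Sobolev spaces are cylindrical-measurable because membership can be tested through countably many pairings with Schwartz functions, and (ii) once $f$ is known to lie in a suitable Sobolev space, Proposition \ref{prop:sobotobesov} converts membership in $B_{p,q}^{\tau}(\R^d;\mu)$ into the countable series condition \eqref{eq:sobotobesov} on wavelet coefficients, which is stable under countable measure-theoretic operations. The difference is in the reduction: you stratify $\S'(\R^d)=\bigcup_{n}W_2^{-n}(\R^d;-n)$ via \eqref{eq:inductive} and apply Proposition \ref{prop:sobotobesov} on each stratum with an $n$-dependent wavelet regularity, whereas the paper first invokes Proposition \ref{prop:embeddingsbesov} to embed $B_{p,q}^{\tau}(\R^d;\mu)$ into one fixed space $W_2^{\tau_1}(\R^d;\mu_1)$ (with $\tau_1\leq \tau+d(1/2-1/p)$ and $\mu_1<\mu+d(1/p-1/2)$), proves $W_2^{\tau_1}(\R^d;\mu_1)\in\mathcal{B}_c(\S'(\R^d))$ and $\mathcal{B}_c(W_2^{\tau_1}(\R^d;\mu_1))\subseteq\mathcal{B}_c(\S'(\R^d))$, and then applies Proposition \ref{prop:sobotobesov} a single time with a single wavelet basis. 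That single embedding removes exactly the ``main obstacle'' you describe: no union over $n$ and no stratum-dependent wavelet bases are needed, because the Besov space never leaves one Sobolev space; your version is equivalent but carries this extra bookkeeping.

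One step in your part (a) should be tightened. For a generic orthonormal basis $(e_k)$ of $W_2^{n}(\R^d;n)$ made of Schwartz functions, the cylindrical set $\{u\in\S'(\R^d):\ \sum_k\lvert\langle u,e_k\rangle\rvert^2<\infty\}$ clearly contains $W_2^{-n}(\R^d;-n)$, but the reverse inclusion is not automatic: such a $u$ agrees with its Riesz representative only on the span of the $e_k$, which is dense in $W_2^{n}(\R^d;n)$ but need not be dense in $\S(\R^d)$ for the Schwartz topology, so a priori the set could be strictly larger than the Sobolev space. The paper sidesteps this by testing against $\varphi_k=\langle\cdot\rangle^{\mu}\Lop_{\tau}\{h_k\}$ with $(h_k)$ the Hermite basis: then $\langle f,\varphi_k\rangle=\langle\Lop_{\tau}\{\langle\cdot\rangle^{\mu}f\},h_k\rangle$, and the desired identity reduces to the classical Hermite characterization of $L_2(\R^d)$ inside $\S'(\R^d)$, after which measurability follows from \cite[Lemma 1]{Fageot2015besov}. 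With that choice of basis (or any family whose span is dense in $\S(\R^d)$), your steps (b) and (c) --- the extended wavelet pairings as pointwise limits of cylindrical functionals, then countable sums and the event that the series is finite --- go through as written.
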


The proof of this result is very similar to the one of \cite[Theorem 4]{Fageot2015besov}, except we work now over $\R^d$ and deal with weights. In particular, we will rely on \cite[Lemma 1]{Fageot2015besov}. 

\begin{proof}
We obtain the desired result in three steps. 
We treat the case $p,q < \infty$ and let the reader adapt the proof for $p$ and/or $q=\infty$.
\begin{itemize}

\item First, we show that $W_2^{\tau}(\R^d;\mu) \in \mathcal{B}_c(\S'(\R^d))$ for every $\tau, \mu \in \R$. 
Let $(h_n)_{n\in \N}$ be an orthonormal basis of $L_2(\R^d)$, with $h_n \in \S(\R^d)$ for all $n\geq 0$. (We can for instance consider the Hermite functions, based on Hermite polynomials, see \cite[Section 2]{Simon2003distributions} or \cite[Section 1.3]{Ito1984foundations} for the definitions.) The interest of having basis functions in $\S(\R^d)$ is that we have the   characterization
\begin{equation}
	L_2(\R^d) = \left\{ f \in \S'(\R^d), \  \sum_{n \in \N} \lvert \langle f , h_n \rangle \rvert^2 < \infty \right\}.
\end{equation}
More generally, with the notations of Section \ref{subsub:sobolev},  $f \in W_2^{\tau}(\R^d;\mu)$ if and only if $\Lop_{\tau} \{ \langle \cdot \rangle^\mu f \} \in L_2(\R^d)$, from which we deduce that
\begin{equation}
	W_2^{\tau}(\R^d;\mu) = \left\{ f \in \S'(\R^d), \  \sum_{n \in \N} \lvert \langle f , \langle \cdot \rangle^{\mu} \Lop_{\tau} \{ h_n \} \rangle \rvert^2 < \infty \right\}.
\end{equation}
We can therefore apply \cite[Lemma 1]{Fageot2015besov} with $\alpha = 2$, $S =\N$, and $\varphi_n = \langle \cdot \rangle^{\mu} \Lop_{\tau} \{ h_n \}$, to deduce that $W_2^{\tau}(\R^d;\mu) \in \mathcal{B}_c(\S'(\R^d))$.

\item For any $\tau, \mu \in \R$,  the cylindrical $\sigma$-field of $W_2^{\tau}(\R^d;\mu)$ is the $\sigma$-field $\mathcal{B}_c(W_2^{\tau}(\R^d;\mu))$generated by the sets
\begin{equation}
	\Big\{ u \in W_2^{\tau}(\R^d;\mu), \ \left( \langle u,\varphi_1 \rangle, \cdots, \langle u, \varphi_n\rangle \right) \in B \Big\},
\end{equation}
where $N \geq 1$, $\varphi_1, \ldots, \varphi_N \in W_2^{-\tau}(\R^d;-\mu)$, and $B$ is a Borelian subset of $\R^N$. Then,   $W_2^{\tau}(\R^d;-\mu) \in \mathcal{B}_c(\S'(\R^d))$ implies that 
\begin{equation} \label{eq:BcinBc}
\mathcal{B}_c(W_2^{\tau}(\R^d;\mu)) \subset \mathcal{B}_c(\S'(\R^d)).
\end{equation}

\item Finally, we show that $B_{p,q}^\tau(\R^d;\mu) \in \mathcal{B}_c(W_2^{\tau_1}(\R^d;\mu_1))$ for some $\tau_1, \mu_1 \in \R$. Coupled with \eqref{eq:BcinBc}, we deduce \eqref{eq:BpqinBc}.

Fix $\tau_1 \leq \tau + d\left( 1/2 - 1/p\right)$ and $\mu_1 < \mu + d\left( 1/p - 1/2\right)$. According to Proposition \ref{prop:embeddingsbesov}, we have the embedding $B_{p,q}^\tau(\R^d;\mu) \subseteq W_2^{\tau_1} (\R^d;\mu_1)$. Now, we can rewrite Proposition \ref{prop:sobotobesov} (with $\tau_0 = \tau_1$ and $\mu_0 = \mu_1$) as 
\begin{equation}
	B_{p,q}^{\tau}(\R^d;\mu) = 
	\left\{ f \in W_2^{\tau_1}(\R^d;\mu_1), \ 
	\sum_{j,G} \left( \sum_{\bm{m}} \rvert \langle f , 2^{j (\tau - d/p + d/2)} \langle 2^{-j} \bm{m} \rangle^\mu \psi_{j,G,\bm{m}} \rangle \rvert^p \right)^{q/p} < \infty 
	\right\}. 
\end{equation}
Again, we   apply \cite[Lemma 1]{Fageot2015besov} with $S = \{  (j,G), j \in \Z, G \in \mathrm{G}_j \}$, $n= ( j,G)$, $T_n = T_{(j,G)} = \Z^d$, $\varphi_{n,m} = \varphi_{j,G,\bm{m}} = 2^{j (\tau - d/p + d/2)} \langle 2^{-j} \bm{m} \rangle^\mu \psi_{j,G,\bm{m}} \rangle$, $\alpha = p$, and $\beta = q/p$, to deduce that $B_{p,q}^\tau(\R^d;\mu) \in \mathcal{B}_c(W_2^{\tau_1}(\R^d;\mu_1))$. Remark that, strictly speaking, Lemma 1 in \cite{Fageot2015besov} is stated for $T_n$ finite, but the proof is easily adapted to $T_n$ countable.
\end{itemize}
\end{proof}

Proposition  \ref{prop:weighted_measurability}  suggests  that the framework of generalized random processes is particularly well-suited to addressing regularity issues. 
By comparison, we recall that the space $\mathcal{C}(\R^d)$ of continuous functions is not measurable with respect to the topological $\sigma$-field on the space of all functions, while we have that
\begin{equation}
	\mathcal{C}(\R^d) \in \mathcal{B}_c(\D'(\R^d)),
\end{equation}
which is the cylindrical $\sigma$-field of the space of generalized functions (not necessarily tempered) \cite[Proposition III.3.3]{Fernique1967processus}.
See \cite{Cartier1963processus} for a discussion on the measurability of function spaces and the advantages of generalized random processes.

%%%%%%%%%%%%%%%%%%%%%%%%%%%%%%%%%%%%%%%%%%%%%%%%%%%%%%%%%%%%%%%%
\section{L\'evy White Noises on Weighted Sobolev Spaces} \label{sec:sobolev}
%%%%%%%%%%%%%%%%%%%%%%%%%%%%%%%%%%%%%%%%%%%%%%%%%%%%%%%%%%%%%%%%

In order to characterize the Besov smoothness of L\'evy white noises, we first obtain information on their Sobolev smoothness. 

\begin{proposition} \label{prop:sobolevregularity}
A L\'evy white noise $w$ with indices $\beta_0>0$ and $\beta_\infty$ is in the weighted Sobolev space $W_{2}^{- \tau}(\R^d;- \mu)$ if
\begin{equation} \label{eq:weightedsobolev}
 \mu > \frac{d}{\beta_0} \text{ and } \tau > \frac{d}{2}.
\end{equation}
\end{proposition}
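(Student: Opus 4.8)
The plan is to verify that a L\'evy white noise $w$ belongs almost surely to $W_2^{-\tau}(\R^d;-\mu)$ by using the Hilbertian structure of this space together with the moment estimates of Section \ref{sec:estimates}. Since $W_2^{-\tau}(\R^d;-\mu)$ is measurable in $\S'(\R^d)$ by Proposition \ref{prop:weighted_measurability}, the statement makes sense. The key observation is that $W_2^{-\tau}(\R^d;-\mu)$ is the image of $L_2(\R^d)$ under the isometry $\Lop_{-\tau,-\mu}: g \mapsto \langle\cdot\rangle^{-\mu}\Lop_{-\tau}\{g\}$ (fourth item of Proposition \ref{property_sobolev}), so $w \in W_2^{-\tau}(\R^d;-\mu)$ if and only if the random function $g$ with $\langle\cdot\rangle^{-\mu}\Lop_{-\tau}\{g\} = w$ lies in $L_2(\R^d)$. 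Equivalently, picking an orthonormal basis $(h_n)$ of $L_2(\R^d)$ contained in $\S(\R^d)$ (e.g.\ Hermite functions, as in the proof of Proposition \ref{prop:weighted_measurability}), we have
\begin{equation} \label{eq:sobolevseries}
w \in W_2^{-\tau}(\R^d;-\mu) \iff \sum_{n\in\N} \lvert \langle w, \langle\cdot\rangle^{-\mu}\Lop_{-\tau}\{h_n\}\rangle\rvert^2 < \infty.
\end{equation}
So the plan reduces to showing the series on the right is almost surely finite.

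First I would fix a suitable exponent. Since $\beta_0 > 0$, I can choose $p$ with $0 < p < \min(\beta_0, 2)$ and $p \leq 1$ if needed, and pick $\tilde\beta_0 \in I_0$, $\tilde\beta_\infty \in I_\infty$ with $p < \tilde\beta_0, \tilde\beta_\infty$ (in the finite-variance case take $p = \tilde\beta_0 = \tilde\beta_\infty = 2$). Theorem \ref{theo:moments} then gives, for every $\varphi\in\S(\R^d)$,
\begin{equation} \label{eq:momentapplied}
\mathbb{E}\left[\lvert\langle w,\varphi\rangle\rvert^p\right] \leq C\left(\lVert\varphi\rVert_{\tilde\beta_0}^p + \lVert\varphi\rVert_{\tilde\beta_\infty}^p\right).
\end{equation}
Applying this to $\varphi_n := \langle\cdot\rangle^{-\mu}\Lop_{-\tau}\{h_n\}$ and summing, it suffices to prove that $\sum_n \lVert\varphi_n\rVert_{\tilde\beta_0}^p < \infty$ and $\sum_n \lVert\varphi_n\rVert_{\tilde\beta_\infty}^p < \infty$ (using $p\leq 1$ so that $\mathbb{E}[(\sum_n |\langle w,\varphi_n\rangle|^2)^{p/2}] \leq \sum_n \mathbb{E}[|\langle w,\varphi_n\rangle|^p]$, or a Tonelli argument on expectations): then the series in \eqref{eq:sobolevseries} has finite $p/2$-th moment and is therefore a.s.\ finite. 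Thus the whole argument comes down to the \emph{deterministic} estimate: for $r \in \{\tilde\beta_0,\tilde\beta_\infty\} \subseteq (0,2]$ and under $\mu > d/\beta_0 \geq d/r$ (note $r \leq \beta_0$ can be arranged since any $\tilde\beta_0 \in I_0$ satisfies $\tilde\beta_0 \leq \beta_0$, and one takes $\tilde\beta_\infty \leq \beta_0$ as well if possible; otherwise one argues the $\tilde\beta_\infty$-term separately since it involves $\langle\cdot\rangle^{-\mu}$ decay independent of the integrability index) and $\tau > d/2$, we have $\sum_n \lVert \langle\cdot\rangle^{-\mu}\Lop_{-\tau}\{h_n\}\rVert_r^r < \infty$.

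I expect this last bound to be the main obstacle, and I would handle it as follows. The operator $T := \langle\cdot\rangle^{-\mu}\Lop_{-\tau} = \langle\cdot\rangle^{-\mu}(\mathrm{I}-\Delta)^{-\tau/2}$ has integral kernel $K(\bm{x},\bm{y}) = \langle\bm{x}\rangle^{-\mu} G_\tau(\bm{x}-\bm{y})$ where $G_\tau$ is the Bessel kernel of order $\tau$, which is positive, decays exponentially at infinity, and has an integrable singularity at the origin precisely when $\tau > d/2$ (more precisely $G_\tau\in L_1(\R^d)$ always, and $G_\tau\in L_2(\R^d)$ iff $\tau > d/2$; for $0<r\leq 2$ one checks $G_\tau \in L_r(\R^d)$ for all such $r$ when $\tau>d/2$ using $G_\tau(\bm{x}) \sim |\bm{x}|^{\tau-d}$ near $0$). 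Then $Th_n(\bm{x}) = \langle\bm{x}\rangle^{-\mu}\int G_\tau(\bm{x}-\bm{y})h_n(\bm{y})\,\drm\bm{y}$, and one estimates $\sum_n \lVert Th_n\rVert_r^r$ by interchanging sum and integral. The cleanest route is to use that for fixed $\bm{x}$, $\sum_n |\langle G_\tau(\bm{x}-\cdot), h_n\rangle|^2 = \lVert G_\tau(\bm{x}-\cdot)\rVert_2^2 = \lVert G_\tau\rVert_2^2$ by Parseval; combining this with a Cauchy--Schwarz / interpolation step to pass from the $\ell^2$-sum over $n$ to the $\ell^r$ statement, and then integrating $\langle\bm{x}\rangle^{-\mu r}$ against a constant over $\R^d$, which converges iff $\mu r > d$, i.e.\ $\mu > d/r$, guaranteed by $\mu > d/\beta_0$. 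For the $r<2$ case one may need Hölder in $n$ against a summable weight, or alternatively one can first prove $w \in W_2^{-\tau'}(\R^d;-\mu')$ for slightly larger $\tau', \mu'$ using $r=2$-type arguments and then conclude by the embeddings \eqref{eq:sobembtau}--\eqref{eq:sobembmu}; I would pick whichever keeps the kernel estimates shortest. The finite-variance case $p=2$ is slightly cleaner since then $\mathbb{E}[\sum_n|\langle w,\varphi_n\rangle|^2] = \sum_n \mathbb{E}[\langle w,\varphi_n\rangle^2]$ directly, and the bound $\mathbb{E}[\langle w,\varphi\rangle^2]\leq \sigma^2\lVert\varphi\rVert_2^2 + \mu_0^2\lVert\varphi\rVert_1^2$ reduces everything to $\sum_n\lVert\varphi_n\rVert_2^2 < \infty$ and $\sum_n\lVert\varphi_n\rVert_1^2 < \infty$, handled by the same kernel computation.
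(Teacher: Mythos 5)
Your architecture---reduce membership in $W_2^{-\tau}(\R^d;-\mu)$ to the a.s.\ finiteness of $\sum_n \lvert\langle w,\varphi_n\rangle\rvert^2$ with $\varphi_n=\langle\cdot\rangle^{-\mu}\Lop_{-\tau}\{h_n\}$, then control this by summing $p$-th moments of the individual coefficients---is not the paper's route, and it breaks at exactly the step you flag as ``the main obstacle.'' The requirement $\sum_n\mathbb{E}[\lvert\langle w,\varphi_n\rangle\rvert^p]\le C\sum_n(\lVert\varphi_n\rVert_{\tilde\beta_0}^p+\lVert\varphi_n\rVert_{\tilde\beta_\infty}^p)<\infty$ is genuinely too strong: under the minimal hypotheses $\tau>d/2$ and $\mu>d/\beta_0$, the operator $\langle\cdot\rangle^{-\mu}\Lop_{-\tau}$ is Hilbert--Schmidt and nothing better, so the norms $\lVert\varphi_n\rVert_2$ (and the $\lVert\varphi_n\rVert_r$, $r<2$, which you control by a weighted H\"older bound of the same kind) are square-summable but not $p$-summable for $p<2$. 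A computation with Hermite asymptotics in $d=1$ gives $\lVert\varphi_n\rVert_2\asymp n^{-(\min(\tau,\mu)+1/2)/2}$, which for $\tau$ close to $1/2$ is about $n^{-1/2}$, so $\sum_n\lVert\varphi_n\rVert_r^p=\infty$ for every $p<2$ --- in particular for every admissible $p<\beta_0$ whenever $\beta_0<2$ (e.g.\ any S$\alpha$S noise). Neither escape route you mention repairs this: H\"older in $n$ against a summable weight only makes the required decay of each term stronger, and the embedding route points the wrong way ($W_2^{-\tau'}(\R^d;-\mu')\subseteq W_2^{-\tau}(\R^d;-\mu)$ requires $\tau'\le\tau$ and $\mu'\le\mu$, so you would have to prove the \emph{harder} statement first). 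The structural reason the term-by-term moment bound cannot work is that the coefficients $\langle w,\varphi_n\rangle$ are strongly dependent: by your own Parseval identity, a single large jump $a\delta(\cdot-\bm{x}_0)$ of the noise contributes only $a^2\langle\bm{x}_0\rangle^{-2\mu}\lVert G_\tau\rVert_2^2$ to the entire $\ell^2$-sum, whereas summing individual $p$-th moments treats its contributions to different $n$ as if they could be large independently. Only your finite-variance case $p=\tilde\beta_0=\tilde\beta_\infty=2$ survives, and there the argument collapses to the Hilbert--Schmidt computation.

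The paper sidesteps all of this by never estimating moments in this proof. It invokes an abstract Minlos--Sazonov-type theorem on the countably Hilbert structure $\S(\R^d)=\bigcap_{n}W_2^{n}(\R^d;n)$: it suffices that (i) $\CF_w$ be continuous on an intermediate weighted space $L_2(\R^d;\mu_0)$ and (ii) the embedding $W_2^{\tau}(\R^d;\mu)\hookrightarrow L_2(\R^d;\mu_0)$ be Hilbert--Schmidt. Point (i) needs only the bound $\lvert\log\CF_w(\varphi)\rvert\le\int\lvert f(\varphi)\rvert\le C(\lVert\varphi\rVert_{\tilde\beta_0}^{\tilde\beta_0}+\lVert\varphi\rVert_2^2)\le C'\lVert\langle\cdot\rangle^{\mu_0}\varphi\rVert_{2}^2$ for $\mu_0>d(1/\beta_0-1/2)$ (the same weighted H\"older trick you use, but applied to the L\'evy exponent rather than to moments of $\langle w,\varphi\rangle$), and point (ii) is exactly your kernel computation, costing $\tau>d/2$ and $\mu-\mu_0>d/2$; adding the two constraints gives $\mu>d/\beta_0$. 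I recommend restructuring your proof along these lines; your Hilbert--Schmidt and H\"older computations can be reused essentially verbatim.
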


\begin{proof}
As we have seen in Proposition \ref{property_sobolev},  we have the countable projective limit 
 $$\S(\R^d) = \bigcap_{\tau, \mu \in \R} W_2^{\tau}(\R^d;\mu) = \bigcap_{n \in \N} W_2^{n}(\R^d;n).$$ We are therefore in the context of \cite[Theorem A.2]{Hida2004}. 
It implies in particular that, if
\begin{itemize}
	\item the characteristic functional $\CF_w$ of $w$ is continuous over $L_2(\R^d;\mu_0)$, and
	\item the identity operator $\mathrm{I}$ is Hilbert-Schmidt from $W_2^\tau(\R^d;\mu)$ to $L_2(\R^d;\mu_0)$,
\end{itemize}
then $w \in W_2^{-\tau}(\R^d,-\mu) = \left( W_2^\tau(\R^d;\mu) \right)'$ almost surely.

\paragraph{Continuity of $\CF_w$}
Let $\mu_0 > d\left( \frac{1}{\beta_0} - \frac{1}{2} \right) \geq 0$. Fix $\epsilon>0$ small enough such that 
\begin{equation} \label{eq:boundepsilon}
	\beta_0 - \epsilon > 0 \text{ and } \mu_0 > d\left( \frac{1}{{\beta_0} - \epsilon} - \frac{1}{2} \right).
\end{equation}
Applying Proposition \ref{prop:boundexponent} with $\tilde{\beta}_\infty = 2$ and $\tilde{\beta}_0 = \beta_0 -\epsilon$, we deduce that
\begin{equation}  \label{eq:interm1sobolev}
	\int_{\R^d} \lvert f(\varphi) \rvert \leq C \left( \lVert \varphi \rVert_{\tilde{\beta}_0}^{\tilde{\beta}_0} +  \lVert \varphi \rVert_2^2 \right).
\end{equation}
Since $\mu_0 >0$, we have that $ \lVert \varphi \rVert_{L_2(\R^d)}^2 \leq  \lVert \varphi \rVert_{L_2(\R^d;\mu_0)}^2$. Moreover, using H\"older inequality, we have
 \begin{equation}
\lVert \varphi \rVert_{\tilde{\beta}_0}^{\tilde{\beta}_0} = \int \lvert \varphi \rvert^{\tilde{\beta}_0} \leq \int ( \lvert \varphi  \rvert^{\tilde{\beta}_0}\langle \cdot \rangle^{\mu_0 \tilde{\beta}_0} )^p \int \langle \cdot \rangle^{- \mu_0 \tilde{\beta}_0 q} 
\end{equation}
for any $1 \leq p,q \leq \infty$ such that $1/p + 1/q = 1$. Setting $p= 2 / \tilde{\beta}_0 \geq 1$, we have $q = \frac{2}{2 - \tilde{\beta}_0}$. Therefore,
\begin{equation} \label{eq:interm2sobolev}
\lVert \varphi \rVert_{\tilde{\beta}_0}^{\tilde{\beta}_0} \leq \int \left( \lvert \varphi \rvert \langle \cdot \rangle^{\mu_0}\right)^2 \int {\langle \cdot \rangle^{-\frac{2\tilde{\beta}_0 \mu_0}{2 -\tilde{\beta}_0}} },
\end{equation} 
the last integral being finite due to \eqref{eq:boundepsilon}, which implies that $\frac{2\tilde{\beta}_0 \mu_0}{2 -\tilde{\beta}_0} > d$. Finally, injecting \eqref{eq:interm2sobolev} into  \eqref{eq:interm1sobolev}, we obtain the inequalities
\begin{equation}  \label{eq:interm3sobolev}
	\lvert \log \CF_w(\varphi) \rvert \leq \int_{\R^d} \lvert f(\varphi) \rvert \leq C'  \lVert \varphi \rVert_{L_2(\R^d;-\mu_0)}^2.
\end{equation}
This implies that $\CF_w$ is well-defined over $L_2(\R^d;-\mu_0)$ and continuous at $\varphi = 0$. Since $\CF_w$ is positive-definite, it is therefore continuous over $L_2(\R^d;-\mu_0)$ \cite{Horn1975quadratic}.

\paragraph{Hilbert-Schmidt condition}
The operator $\Lop_{-\tau, -\mu}$ is defined in Proposition \ref{property_sobolev}. Its kernel $k_{-\tau,-\mu}$, such that 
\begin{equation}
\Lop_{-\tau, -\mu} \{ \varphi \} (\bm{x}) = \int_{\R^d} k_{-\tau,-\mu} (\bm{x}, \bm{y}) \varphi (\bm{y}) \drm \bm{y},
\end{equation}
is given by
\begin{equation}
k_{-\tau,-\mu} (\bm{x}, \bm{y}) = \langle \bm{x} \rangle ^{- \mu}  k_{-\tau} (\bm{y} - \bm{x})
\end{equation}
with $ k_{-\tau}$ being the Fourier multiplier of $\Lop_{-\tau}$,  which satisfies $\widehat{k}_{-\tau} (\bm{\omega}) = \langle \bm{\omega} \rangle^{-\tau}$. 

According to Proposition \ref{property_sobolev}, the operator $\Lop_{-\tau,-\mu+ \mu_0}$ is an isometry from $L_2(\R^d)$ to $W_2^{\tau}(\R^d;\mu- \mu_0)$. Therefore, we have the equivalences
\begin{align} \label{eq:equivalenceHS}
	\mathrm{I} : W_2^{\tau}(\R^d;\mu) \rightarrow L_2(\R^d;\mu_0) \text{ is HS} & \Leftrightarrow \mathrm{I} : W_2^{\tau}(\R^d;\mu-\mu_0) \rightarrow L_2(\R^d;\mu_0)  \text{ is HS} \nonumber \\
	& \Leftrightarrow \Lop_{-\tau,-\mu+\mu_0} : L_2 (\R^d) \rightarrow L_2(\R^d) \text{ is HS}. 
\end{align}
There HS means Hilbert-Schmidt.
The last condition in \eqref{eq:equivalenceHS} is equivalent with the condition \cite[Theorem VI.23]{Reed1980methods}  
\begin{equation} 
	k_{- \tau, - \mu + \mu_0} (\cdot ,\cdot ) \in L_2(\R^d\times\R^d).
\end{equation}
Moreover,   by the change of variable $\bm{z} = (\bm{y}- \bm{x})$ and using Parseval relation, 
\begin{align}
	\int_{\R^d\times\R^d} k_{- \tau, - \mu+ \mu_0} (\bm{x},\bm{y})^2 \drm \bm{x} \drm \bm{y} &= \int_{\R^d} \langle \bm{x} \rangle^{-2(\mu-\mu_0)} \drm \bm{x} \int_{\R^d} k_{-\tau} (\bm{z}) \drm \bm{z} \\
				&= \lVert \langle \cdot \rangle^{-(\mu-\mu_0)} \rVert_{L_2(\R^d)}^2  \lVert \langle \cdot \rangle^{-\tau} \rVert_{L_2(\R^d)}^2.
\end{align}
Therefore, $k_{\tau, \mu-\mu_0}$ is square-integrable if and only if $(\mu - \mu_0) > d/2$ and $\tau > d/2$.
By choosing $\mu_0$ close to $d(1/\beta_0 - 1/2)$, we finally obtain that $w \in W_2^{-\tau}(\R^d;- \mu)$ if
\begin{equation}
	\tau > \frac{d}{2} \text{ and } \mu> d\left(\frac{1}{\beta_0} - \frac{1}{2} \right) + \frac{d}{2} = \frac{d}{\beta_0},
\end{equation}
and \eqref{eq:weightedsobolev} is proved.
\end{proof}

\section{L\'evy White Noises on Weighted Besov Spaces} \label{sec:weighted}
%%%%%%%%%%%%%%%%%%%%%%%%%%%%%%%%%%%%%%%%%%%%%%%%%%%%%%%%%%%%%%%%

We investigate here the Besov smoothness of L\'evy white noises over the complete space $\R^d$. The paths of a nontrivial white noise $w$ are never included in $B_{p,q}^\tau(\R^d)$, since their is no decay at infinity. For this reason, and  as for Sobolev spaces, we consider the weighted Besov spaces, introduced in Section \ref{subsec:besov}. The main course of this section is to prove Theorem \ref{theo:weighted}.
.
\begin{theorem}\label{theo:weighted}
Consider a L\'evy white noise $w$  with Blumenthal-Getoor indices $\beta_0>0 $ and $\beta_\infty$. 
Let $0<p,q \leq \infty$, $\tau, \mu \in \R$.
If  
\begin{equation} \label{eq:weightedbesovregu}
	  \mu> \frac{d}{\min(p,\beta_0)} \text{ and }  \tau > d\left(1 -  \frac{1}{ \max( p, \beta_\infty)}   \right),
\end{equation}
then $w \in B_{p,q}^{- \tau}(\R^d; - \mu)$ a.s.
\end{theorem}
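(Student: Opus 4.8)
The plan is to combine the wavelet characterization of weighted Besov spaces (Proposition \ref{prop:sobotobesov}) with the moment estimates for L\'evy white noises applied to wavelets (Corollary \ref{coro:momentbound} and its extension Corollary \ref{coro:extendedcoro1}), passing from moment bounds to almost-sure membership via a Borel--Cantelli / summability argument. First, fix a parameter $\beta$ with $\max(p,\beta_\infty)>\beta$, $\beta<\beta_0$, and $\beta$ close to $\max(p,\beta_\infty)$ from below, chosen so that $\beta\in I_0\cap I_\infty$; the hypothesis \eqref{eq:weightedbesovregu} has been calibrated precisely so that such a $\beta$ exists together with enough room in $\tau$ and $\mu$. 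By Proposition \ref{prop:sobolevregularity}, since $\mu>d/\beta_0$ (a consequence of $\mu>d/\min(p,\beta_0)$) we may pick $\tau_1>d/2$ with $w\in W_2^{-\tau_1}(\R^d;-\mu_1)$ a.s. for a suitable $\mu_1$; this places $w$ in a fixed Sobolev space and makes the wavelet coefficients $\langle w,\psi_{j,G,\bm m}\rangle$ well-defined, so that Proposition \ref{prop:sobotobesov} applies and reduces the claim to showing that the random series
\begin{equation} \label{eq:seriestoshow}
	S := \sum_{j\geq 0} 2^{j(-\tau - d/p + d/2)q} \sum_{G\in \mathrm{G}^j} \left( \sum_{\bm m\in\Z^d} \langle 2^{-j}\bm m\rangle^{-\mu p} |\langle w,\psi_{j,G,\bm m}\rangle|^p \right)^{q/p}
\end{equation}
is finite almost surely (with the usual modifications if $p$ or $q$ is infinite).

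The key step is to estimate the expectation of the inner sum. I would treat the case $p\leq \beta$ and $p>\beta$ slightly differently, but the unifying tool is Corollary \ref{coro:extendedcoro1}, which gives $\expect{|\langle w,\psi_{j,G,\bm m}\rangle|^r}\leq C\,2^{jdr(1/2-1/\beta)}\lVert\psi_G\rVert_\beta^r$ for any $r<\beta$ (and the wavelets are fixed bumps, so $\lVert\psi_G\rVert_\beta$ is a uniform constant; $2^{jd/2}$ is already factored out in the normalization used in \eqref{eq:seriestoshow}). When $p\leq\beta$, take $r=p$ directly; the weighted sum over $\bm m$ contributes $\sum_{\bm m}\langle 2^{-j}\bm m\rangle^{-\mu p}$, which is comparable to $2^{jd}$ times a convergent integral precisely because $\mu p>d$ (from $\mu>d/p\geq d/\min(p,\beta_0)$), giving a bound of order $2^{jd}\cdot 2^{jdp(1/2-1/\beta)}$ for $\expect{\sum_{\bm m}\langle 2^{-j}\bm m\rangle^{-\mu p}|\langle w,\psi_{j,G,\bm m}\rangle|^p}$. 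When $p>\beta$, I would first apply Jensen/the triangle inequality in $L_{p/\beta}$ (or bound the $p$-sum by the $\beta$-sum raised to a power since $\beta<p$) to reduce to controlling $\beta$-th moments, then proceed as before; the net effect is to replace $p$ by $\beta$ in the exponent bookkeeping, which is why $\max(p,\beta_\infty)$ (equivalently $\beta$ when $p\le\beta_\infty$) appears in the condition on $\tau$.

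Having bounded $\expect{(\text{inner }\bm m\text{-sum})}$ by something of the form $C\,2^{j d(1 + p(1/2-1/\beta))}$ (up to adjusting the roles of $p,\beta$), I would raise to the power $q/p$ — using subadditivity of $t\mapsto t^{q/p}$ if $q/p<1$, or Jensen if $q/p\geq 1$, to move the expectation inside or control it — and check that the resulting power of $2^j$, multiplied by the prefactor $2^{j(-\tau-d/p+d/2)q}$, is summable in $j$. This summability is exactly equivalent to $\tau > d(1 - 1/\max(p,\beta_\infty))$ after the arithmetic, so with $\beta$ chosen close enough to $\max(p,\beta_\infty)$ the exponent is strictly negative and $\expect{S}<\infty$, whence $S<\infty$ a.s. For $q=\infty$ one replaces the sum over $j$ by a supremum and uses that an $\ell^1$ (hence summable) bound on the expectations forces the sup to be finite a.s.; for $p=\infty$ one uses the extended moment estimate with the $L_\infty$ norm of the wavelet. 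The main obstacle I anticipate is the case $p>\beta$ (in particular $p>\beta_\infty$): one must be careful that reducing $p$-th moments to $\beta$-th moments via Minkowski-type inequalities does not cost more than a constant per scale, and that the interplay between the weight exponent $\mu p$ (which must beat $d$) and the smoothness exponent is handled with the correct choice of auxiliary $\beta$ simultaneously satisfying $\beta<\beta_0$, $\beta<p$ or $\beta$ near $\beta_\infty$, and $\beta\in I_0\cap I_\infty$; verifying that \eqref{eq:weightedbesovregu} leaves room for all these constraints at once is the crux.
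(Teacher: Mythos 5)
Your overall architecture (Sobolev regularity from Proposition \ref{prop:sobolevregularity}, reduction to wavelet sums via Proposition \ref{prop:sobotobesov}, moment bounds on wavelet coefficients, finiteness of the expected Besov norm) is the same as the paper's, but there are two genuine gaps. First, your choice of the auxiliary exponent $\beta$ is untenable. Corollary \ref{coro:momentbound} bounds moments of order strictly \emph{less} than $\beta\in I_0\cap I_\infty$. If $p\leq\beta_\infty$, any $\beta<\max(p,\beta_\infty)=\beta_\infty$ lies outside $I_\infty$ (since $\beta_\infty=\inf I_\infty$), so the $\beta$ you describe does not exist; if $p>\beta_\infty$, taking $\beta<p$ means you cannot bound $\mathbb{E}[|\langle w,\psi_{j,G,\bm m}\rangle|^p]$ at all (such moments may even be infinite when $p\geq\beta_0$), and your fallback --- bounding the $\ell^p$-sum by the $\ell^\beta$-sum and then ``moving the expectation inside'' the power $p/\beta>1$ --- fails because Jensen gives $\mathbb{E}[Y^{p/\beta}]\geq(\mathbb{E}[Y])^{p/\beta}$, the wrong direction. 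The paper instead takes $\beta>p$ with $\beta\in I_0\cap I_\infty$, approaching $\max(p,\beta_\infty)$ \emph{from above} when $p<\beta_0$; and when $p\geq\beta_0$ no such $\beta$ exists, so one first proves the result for exponents $p'<\beta_0$ and then invokes the deterministic embedding of Proposition \ref{prop:embeddingsbesov} --- this embedding step is precisely what produces the $\min(p,\beta_0)$ in the weight condition, which your bookkeeping never yields. (The same remark applies to $q\neq p$: the paper reduces to $q=p$ via \eqref{eq:qdominated}; your Jensen argument for $q/p\geq1$ again goes the wrong way.)

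Second, your premise that the hypothesis \eqref{eq:weightedbesovregu} guarantees the existence of some $\beta\in I_0\cap I_\infty$ is false in general: $I_0\cap I_\infty$ can be empty, for instance when $\beta_\infty>\beta_0$ (heavy-tailed jumps giving small $\beta_0$ together with an active small-jump part giving large $\beta_\infty$). The paper handles this by the L\'evy--It\^o decomposition $w=w_1+w_2$ with $w_1$ compound-Poisson ($\beta_\infty(w_1)=0$, $\beta_0(w_1)=\beta_0$) and $w_2$ of finite variance ($\beta_0(w_2)=2\in I_0(w_2)$, $\beta_\infty(w_2)=\beta_\infty$), applying the first-case argument to each piece and using linearity of the Besov space. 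This decomposition step is an essential missing idea in your proposal; without it, the argument only covers noises for which $I_0\cap I_\infty\neq\emptyset$.
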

 
\begin{proof}
We start with some preliminary remarks. 
\begin{itemize}
\item First of all, it is sufficient to prove \eqref{eq:weightedbesovregu} for $p=q$, the other cases being deduced by the embedding relations $B_{p,p}^{- \tau + \epsilon}(\R^d;- \mu) \subseteq B_{p,q}^{- \tau}(\R^d;- \mu)$ already seen in \eqref{eq:qdominated}. Therefore, a different parameter $q$ can always be absorbed at the cost of an arbitrarily small smoothness, which is still possible in our case since the condition on $\tau$ in  \eqref{eq:weightedbesovregu} is a strict inequality.  For the same reason, it is admissible to consider that $p< \infty$.

\item Second, we know from Proposition \ref{prop:sobolevregularity} that, for a fixed $\epsilon >0$ and with probability $1$, 
\begin{equation}
w  \in W_2^{-d/2-\epsilon} \left(\R^d;- \frac{d}{\beta_0} -  \epsilon\right).
\end{equation}
From now, we fix $p=q,\tau,\mu$. We can apply Proposition \ref{prop:sobotobesov} with $\tau_0 = -d/2 - \epsilon$ and $\mu_0 = d(\frac{1}{\beta_0} - \frac{1}{2}) + \epsilon$. We set $u$ according to \eqref{eq:conditionu} and consider $(\psi_{j,G,\bm{m}})$ a wavelet basis with regularity $u$ thereafter. 
\end{itemize}

\paragraph{First case: $\beta_\infty < \beta_0$ or $\beta_\infty = \beta_0 \in I_0 \bigcap I_\infty$} We fix $p < \beta \in I_\infty \bigcap I_0$. 
We are by assumption in the conditions of Corollary \ref{coro:momentbound}.  In particular, Corollary \ref{coro:extendedcoro1} applies: The random variable  $\langle w, \varphi\rangle$ is well-defined for any $\varphi \in L_{\beta}(\R^d)$. In particular, since $\beta \in (0,2]$, the Daubechies wavelets $\psi_{j,G,\bm{m}}$, which are compactly supported and in $L_2(\R^d)$, are in $L_{\beta}(\R^d)$, so that the random variables $\langle w,\psi_{j,G,\bm{m}}\rangle$ are well-defined and   \eqref{eq:boundwavelets} is applicable to them. 
We shall show that $w \in B_{p,p}^{-\tau} (\R^d;- \mu)$ a.e.  if
\begin{equation} \label{eq:mutauinterm}
	\mu > d/p \text{ and } \tau > d (1- 1 / \beta).
\end{equation}
To show that $w \in B_{p,p}^{- \tau}(\R^d; - \mu)$ with probability $1$, it is sufficient to show that 
\begin{equation}
	\mathbb{E}\left[ \lVert w \rVert_{B_{p,p}^{-\tau}(\R^d; -  \mu)}^p \right] = \sum_{j\geq 0} \left( 2^{j( - \tau p - d + dp/2)} \sum_{G \in \mathrm{G}^j, \bm{m}\in \Z^d} \frac{ \mathbb{E}[\lvert \langle w,\psi_{j,G,\bm{m}} \rangle \rvert^p] }{ \langle 2^{-j} \bm{m} \rangle^{\mu p}} \right) < \infty.
\end{equation}
The white noise $w$ being stationary, $\mathbb{E}[\lvert \langle w,\psi_{j,G,\bm{m}} \rangle \rvert^p]$ does not depend on the shift parameter $\bm{m}$. Moreover, using \eqref{eq:boundwavelets} with $a = 2^{-j}$, we have that
\begin{equation}
	\mathbb{E}[\lvert \langle w,\psi_{j,G,\bm{m}} \rangle \rvert^p] \leq C 2^{j pd(1/2 - 1/\beta)} \lVert \psi_G \rVert_\beta^p.
\end{equation}
Hence, we deduce that
\begin{equation}
	\mathbb{E}\left[ \lVert w \rVert_{B_{p,p}^{-\tau}(\R^d;-\mu)}^p \right] \leq C' \sum_{j\geq 0} 2^{j (- \tau p - d + dp - dp/\beta)} \sum_{\bm{m}\in \Z^d} \langle 2^{-j} \bm{m} \rangle^{-\mu p},
\end{equation}
where $C' = C\sum_{G\in \mathrm{G}^0} \lVert \psi_G \rVert_\beta^p$ is a finite constant. The sum
$\sum_{\bm{m}\in \Z^d} \langle 2^{-j} \bm{m} \rangle^{-\mu p}$
is finite if and only if $\mu > d / p$, in which case there exists a constant $C_0 >0$ such that
\begin{equation} \label{eq:weightedinterm1}
\sum_{\bm{m}\in \Z^d} \langle 2^{-j} \bm{m} \rangle^{-\mu p} \underset{j\rightarrow \infty}{\sim} C_0 2^{j d}.
\end{equation}
Indeed, we have the convergence of the Riemann sums
\begin{equation}
	\frac{1}{n^d} \sum_{\bm{m}\in \Z^d} \left\langle \frac{\bm{m}}{n} \right\rangle^{-\mu p} \underset{n \rightarrow \infty}{\longrightarrow} \int_{\R^d} \langle \bm{x} \rangle^{- \mu p} \drm \bm{x}  < \infty
\end{equation}
and \eqref{eq:weightedinterm1} is showed setting $2^j = n$, for $C_0 =  \int_{\R^d} \langle \bm{x} \rangle^{-\mu p} \drm \bm{x}$. Finally, for $\mu > d/p$, the quantity $\mathbb{E}\left[ \lVert w \rVert_{B_{p,p}^{-\tau}(\R^d; - \mu)}^p \right] $ is finite if 
\begin{equation}
	\sum_{j\geq 0} 2^{j (- \tau p + dp - dp/ \beta)} < \infty,
\end{equation}
which happens when
\begin{equation}
	\tau -  d + d/\beta > 0.
\end{equation}
We  have shown that $w \in B_{p,p}^{-\tau}(\R^d;-\mu)$ under the conditions of \eqref{eq:mutauinterm}, as expected. We now split the domain of $p$:
\begin{itemize}
	\item if $p \leq \beta_\infty$, by choosing $\beta$ close enough to $\beta_\infty$ (or equal if $\beta_0 = \beta_\infty$), we obtain that $w \in B_{p,p}^{-\tau}(\R^d;- \mu)$ if $\mu> d/p$ and $\tau > d-d/\beta_\infty$;
	\item if $\beta_\infty < p < \beta_0$, by choosing $\beta$ close enough to $p$, we obtain that $w \in B_{p,p}^{-\tau}(\R^d;- \mu)$ if $\mu> d/p$ and $\tau > d-d/p$.
\end{itemize}
We summarize the situation by  $w \in B_{p,p}^{-\tau}(\R^d;- \mu)$ if $\mu> d/p$ and $\tau > d-d/\max(p,\beta_\infty)$, which corresponds to \eqref{eq:weightedbesovregu} for $p < \beta_0$. Finally, the case $p\geq \beta_0$ is deduced from the result for $p<\beta_0$ (by considering values of $p$ arbitrarily close to $\beta_0$) and   the embedding \eqref{eq:conditiontau}.

\paragraph{Second case: general $(\beta_0,\beta_\infty)$} A white noise $w$ can be decomposed as \begin{equation} w = w_1 + w_2, \end{equation}
where $w_1$ and $w_2$ are independent, $w_1$ is a compound-Poisson white noise,  and $w_2$ is finite-variance. To see that, we  invoke  the L\'evy-It\^o decomposition, see for instance \cite[Chapter 4]{Sato1994levy}. It means in particular that $\beta_0(w_1) = \beta_0>0$ and $\beta_\infty( w_1) = 0$. Therefore, $w_1$ is covered by the first case. Moreover, $\beta_\infty (w_2) = \beta_\infty$ and $\beta_0 (w_2) = 2 \in I_0 (w_2)$. Again, $w_2$ is covered by the first case. Indeed, it is obvious if $\beta_\infty <2$. But if $\beta_\infty = 2$, we have that $\beta_\infty(w_2) = \beta_0(w_2) =2 \in I_0 (w_2) \bigcap I_\infty (w_2)$. Hence, $w$ is the sum of two processes $w_1$ and $w_2$ that are in $B_{p,q}^{-\tau}(\R^d;- \mu)$ under the conditions \eqref{eq:weightedbesovregu}. Besov spaces being linear spaces, these conditions are also sufficient for $w$.

\end{proof}

%%%%%%%%%%%%%%%%%%%%%%%%%%%%%%%%%%%%%%%%%%%%%%%%%%%%%%%%%%%%%%%%
\section{L\'evy White Noises on Local Besov Spaces} \label{sec:local}
%%%%%%%%%%%%%%%%%%%%%%%%%%%%%%%%%%%%%%%%%%%%%%%%%%%%%%%%%%%%%%%%

The space of infinitely smooth and compactly supported functions is denoted by $\D(\R^d)$. Its topological dual is $\D'(\R^d)$, the space of generalized functions, not necessarily tempered. 
In the same way that we defined generalized random processes over $\S'(\R^d)$, we can also define generalized random processes over $\D'(\R^d)$. This is actually the original approach of Gelfand and Vilenkin in \cite{GelVil4}. 
As we briefly saw in Section \ref{subsec:GRP}, the class of L\'evy white noises over $\D'(\R^d)$ is strictly larger than the one over $\S'(\R^d)$. A L\'evy white noise over $\D'(\R^d)$ is also in $\S'(\R^d)$ if and only if its L\'evy exponent satisfies the Schwartz condition \cite{Dalang2015Levy} or, equivalently, if and only if its Blumenthal-Getoor index $\beta_0$ is not $0$. Until now, we have only considered L\'evy white noises for which $\beta_0 \neq 0$. Since we shall now focus on the local Besov smoothness of L\'evy white noises, the two equivalent conditions are now superfluous.

\begin{definition}
	Let $\tau \in \R$ and $0<p,q \leq \infty$. The \emph{local Besov space $B_{p,q}^{\tau, \mathrm{loc}}(\R^d)$} is the collection of functions $f \in \D'(\R^d)$ such that $f\times \varphi \in B_{p,q}^\tau(\R^d)$ for every $\varphi \in \D(\R^d)$.
\end{definition}

The weighted and local Besov regularities are linked according to Proposition \ref{prop:weightedtolocal}.

\begin{proposition} \label{prop:weightedtolocal}
	Let $\tau, \mu \in \R$, $0<p,q \leq \infty$. We have the continuous embedding
	\begin{equation}
		B_{p,q}^\tau(\R^d;\mu) \subseteq B_{p,q}^{\tau,\mathrm{loc}}(\R^d).
	\end{equation}
\end{proposition}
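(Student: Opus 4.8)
The plan is to reduce the local statement to the weighted one via a single multiplication operator. Fix $f \in B_{p,q}^\tau(\R^d;\mu)$ and $\varphi \in \D(\R^d)$; I must show $f \varphi \in B_{p,q}^\tau(\R^d)$, with the norm controlled by $\lVert f \rVert_{B_{p,q}^\tau(\R^d;\mu)}$ up to a constant depending on $\varphi$. First I would use the equivalence $f \in B_{p,q}^\tau(\R^d;\mu) \Leftrightarrow \langle\cdot\rangle^\mu f \in B_{p,q}^\tau(\R^d)$ already recorded in the excerpt, writing $f\varphi = \langle\cdot\rangle^{-\mu}\varphi \cdot \bigl(\langle\cdot\rangle^\mu f\bigr)$. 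Since $\varphi$ has compact support, the function $\psi := \langle\cdot\rangle^{-\mu}\varphi$ also lies in $\D(\R^d) \subset \S(\R^d)$, regardless of the sign of $\mu$, because $\langle\cdot\rangle^{-\mu}$ is smooth and bounded on the support of $\varphi$. So the claim follows once we know that pointwise multiplication by a fixed function $\psi \in \D(\R^d)$ maps $B_{p,q}^\tau(\R^d)$ continuously into itself.

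The remaining ingredient is precisely this mapping property of multiplication by a compactly supported smooth function on (unweighted) Besov spaces $B_{p,q}^\tau(\R^d)$ for arbitrary $\tau \in \R$ and $0 < p,q \le \infty$. This is a standard fact in the Triebel-style theory of function spaces: multiplication by a function in $\S(\R^d)$ (a fortiori in $\D(\R^d)$) is a continuous operator on $B_{p,q}^\tau(\R^d)$ for every admissible parameter range, including negative smoothness and $p,q < 1$; see for instance the "pointwise multipliers" sections in \cite{Triebel2010theory} or \cite{Triebel2008function}. I would simply cite this. If one preferred a self-contained argument consistent with the wavelet framework of the paper, one could instead argue at the level of wavelet coefficients: the coefficients of $\psi f$ are $\langle \psi f, \psi_{j,G,\bm m}\rangle = \langle f, \psi\,\psi_{j,G,\bm m}\rangle$, and because $\psi$ is smooth and $\psi_{j,G,\bm m}$ is a rescaled compactly supported wavelet, the product $\psi\,\psi_{j,G,\bm m}$ has an almost-diagonal expansion in the wavelet basis with rapidly decaying (in scale and space) off-diagonal entries; boundedness of the resulting almost-diagonal matrix on the sequence space $b_{p,q}^\tau(0)$ then gives the claim. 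But invoking the multiplier theorem directly is cleaner.

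The main (and really only) obstacle is to make sure the multiplier result is quoted in sufficient generality — in particular for $\tau < 0$ and for the quasi-Banach range $0 < p,q < 1$, since the white-noise application genuinely needs negative smoothness and small exponents. Once that is in place, the continuity of $f \mapsto f\varphi$ from $B_{p,q}^\tau(\R^d;\mu)$ to $B_{p,q}^\tau(\R^d)$ is just the composition of three continuous maps: $f \mapsto \langle\cdot\rangle^\mu f$ (an isomorphism onto $B_{p,q}^\tau(\R^d)$), multiplication by $\psi \in \D(\R^d)$, and the identity embedding $B_{p,q}^\tau(\R^d) \hookrightarrow B_{p,q}^\tau(\R^d)$. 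Taking the supremum of the operator norms over a bounded family of test functions $\varphi$ is not needed; the definition of the local space only requires membership for each fixed $\varphi$, and the continuity of the embedding $B_{p,q}^\tau(\R^d;\mu) \subseteq B_{p,q}^{\tau,\mathrm{loc}}(\R^d)$ is understood in the appropriate sense for the inductive/projective structure of the local space. This completes the proof.
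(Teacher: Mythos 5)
Your proposal is correct; note, however, that the paper states Proposition \ref{prop:weightedtolocal} without any proof, so there is nothing to compare it against --- your argument simply supplies the standard justification the authors left implicit. Both ingredients you invoke are available in the required generality: the equivalence $f \in B_{p,q}^{\tau}(\R^d;\mu) \Leftrightarrow \langle\cdot\rangle^{\mu} f \in B_{p,q}^{\tau}(\R^d)$ is already recorded in Section \ref{subsec:besov} via \cite[Theorem 4.2.2]{Edmunds2008function}, and the pointwise multiplier theorems in \cite{Triebel2010theory} (Section 2.8) do cover all $\tau \in \R$ and the quasi-Banach range $0<p,q\le\infty$, provided the multiplier has sufficiently many bounded derivatives, which is automatic for $\psi = \langle\cdot\rangle^{-\mu}\varphi \in \D(\R^d)$. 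Two points are worth spelling out if this were written up: (i) the factorization $f\varphi = \bigl(\langle\cdot\rangle^{-\mu}\varphi\bigr)\cdot\bigl(\langle\cdot\rangle^{\mu} f\bigr)$ relies on associativity of the product of a tempered distribution with smooth functions of polynomial growth, which follows directly from the duality definition $\langle g u, \phi\rangle = \langle u, g\phi\rangle$; (ii) since the paper defines $B_{p,q}^{\tau,\mathrm{loc}}(\R^d)$ only as a set, the "continuous embedding" can only be read as the quantitative estimate $\lVert f\varphi\rVert_{B_{p,q}^{\tau}(\R^d)} \le C_{\varphi}\,\lVert f\rVert_{B_{p,q}^{\tau}(\R^d;\mu)}$ for each fixed $\varphi \in \D(\R^d)$, which your composition of the three bounded maps delivers, and in any case only the set-theoretic inclusion is used later (Corollary \ref{coro:local}). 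Your alternative wavelet-coefficient sketch is not needed once the multiplier theorem is cited.
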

 
The local regularity of L\'evy white noise is directly obtained from the previous results, essentially up to the case of compound-Poisson noise with $\beta_0 = 0$. Before stating the main result of this section, we therefore have to analyze the compound-Poisson case. 

\begin{definition}
	A \emph{compound-Poisson white noise} is a L\'evy white noise with a L\'evy exponent of the form
	\begin{equation} \label{eq:exponentPoisson}
	f(\xi) = \exp \left(   \lambda (\CFjump (\xi) -1 )  \right),
	\end{equation}
	where $\lambda >0$ is called the \emph{Poisson parameter} and $\Pjump$  is a probability law on $\R\backslash\{0\}$ called the \emph{law of jumps}.
\end{definition}

Compound-Poisson random variables are infinitely divisible \cite{Sato1994levy}, so that \eqref{eq:exponentPoisson} defines a valid L\'evy exponent. 

\begin{lemma} \label{lemma:poisson}
	Let $\tau \in \R$ and $0<p,q\leq \infty$.
	Consider a compound-Poisson noise $w$. If 
	\begin{equation} \label{prop:reguPoisson}
		\tau > d\left( 1 - \frac{1}{p}  \right),
	\end{equation}
	then $w \in B_{p,q}^{-\tau,\mathrm{loc}} (\R^d)$. 
\end{lemma}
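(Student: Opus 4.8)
The plan is to explicitly describe a compound-Poisson white noise in the ball and exploit its local structure as a finite sum of weighted Dirac masses, then apply the wavelet characterization of Besov spaces. First I would recall that a compound-Poisson white noise admits the representation $w = \sum_{n} a_n \delta(\cdot - \bm{x}_n)$, where $(\bm{x}_n)$ are the points of a homogeneous Poisson process on $\R^d$ with intensity $\lambda$ and $(a_n)$ are i.i.d.\ with law $\Pjump$, independent of the point process. This is a standard fact about compound-Poisson noises; see \cite{Unser2014sparse}. Since a local Besov statement only concerns $w \times \varphi$ for $\varphi \in \D(\R^d)$, and since $\varphi$ is supported in some fixed compact set $K$, it suffices to show that $\sum_{\bm{x}_n \in K'} a_n \delta(\cdot - \bm{x}_n) \in B_{p,q}^{-\tau}(\R^d)$ almost surely for every bounded open set $K'$; indeed, almost surely only finitely many $\bm{x}_n$ fall in $K'$, so the object is a \emph{finite} linear combination of Dirac masses.

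The key reduction is therefore: a single Dirac mass $\delta(\cdot - \bm{x}_0)$ belongs to $B_{p,q}^{-\tau}(\R^d)$ whenever $\tau > d(1 - 1/p)$. By translation invariance of (unweighted) Besov spaces we may take $\bm{x}_0 = \bm{0}$. The smoothness of the Dirac delta is classical: $\delta \in B_{p,q}^{-\tau}(\R^d)$ if and only if $\tau > d/p'$ where $1/p + 1/p' = 1$, i.e.\ $\tau > d(1-1/p)$ (with the usual convention for $p \leq 1$, where $\delta \in B_{p,q}^{0}$ and more). One can verify this directly from the wavelet norm \eqref{eq:besovnorm}: for a Daubechies wavelet $\psi_{j,G,\bm{m}}$ of regularity $u > \tau$, we have $|\langle \delta, \psi_{j,G,\bm{m}}\rangle| = 2^{jd/2}|\psi_G(-\bm{m})| $, which by compact support of $\psi_G$ is nonzero for only boundedly many $\bm{m}$ at each scale $j$, with magnitude $\lesssim 2^{jd/2}$. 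Plugging into \eqref{eq:besovnorm} with $\mu = 0$ gives, up to constants, $\sum_{j \geq 0} 2^{j(-\tau - d/p + d/2)q} \cdot 2^{jdq/2} = \sum_j 2^{j(-\tau - d/p + d)q}$, which converges precisely when $\tau > d - d/p = d(1-1/p)$. I must also check the technical point that $\langle \delta, \psi_{j,G,\bm{m}}\rangle$ is well-defined, which holds because the wavelets are continuous (regularity $u \geq 1$).

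Putting the pieces together: fix $\varphi \in \D(\R^d)$ with $\mathrm{Supp}\,\varphi \subset K$ and choose a bounded open $K'$ containing $K$. Almost surely the set $\{n : \bm{x}_n \in K'\}$ is finite, so $\One_{K'} \cdot w = \sum_{\bm{x}_n \in K'} a_n \delta(\cdot - \bm{x}_n)$ is a finite sum of (scaled, translated) Dirac masses, each in $B_{p,q}^{-\tau}(\R^d)$ under \eqref{prop:reguPoisson}; since Besov spaces are linear, this finite sum lies in $B_{p,q}^{-\tau}(\R^d)$ a.s. Then $w \times \varphi = (\One_{K'} w)\times \varphi \in B_{p,q}^{-\tau}(\R^d)$ because multiplication by a fixed $\D(\R^d)$ function is a bounded operator on $B_{p,q}^{-\tau}(\R^d)$ (a pointwise multiplier result, e.g.\ \cite[Section 4.2]{Edmunds2008function} or \cite{Triebel2010theory}). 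As this holds for every $\varphi \in \D(\R^d)$, we conclude $w \in B_{p,q}^{-\tau,\mathrm{loc}}(\R^d)$ a.s. The main obstacle — really the only nonroutine point — is justifying the pathwise representation of compound-Poisson noise as a locally finite sum of Dirac masses and being careful that this representation is valid as an identity in $\S'(\R^d)$ (or $\D'(\R^d)$); once that is in hand, the Besov membership of a single Dirac mass is a short wavelet computation and everything else is linearity plus a standard multiplier bound.
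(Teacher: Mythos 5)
Your proposal is correct and follows essentially the same route as the paper: represent the compound-Poisson noise as $\sum_n a_n\delta(\cdot-\bm{x}_n)$, observe that multiplying by a compactly supported $\varphi$ leaves almost surely a finite sum of Dirac masses, and reduce to the Besov regularity $\tau>d(1-1/p)$ of a single Dirac (which the paper simply cites, while you verify it by the wavelet computation). The only cosmetic difference is your detour through a pointwise-multiplier bound, which is unnecessary since $w\times\varphi=\sum_{\bm{x}_n\in K'}a_n\varphi(\bm{x}_n)\delta(\cdot-\bm{x}_n)$ is already directly a finite linear combination of Dirac masses.
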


\begin{proof}
Let $\lambda$ and $\mathcal{P}_{\mathrm{jump}}$ be  the Poisson parameter and the law of jumps of $w$, respectively. 
	The compound-Poisson noise $w$ can be written as  \cite[Theorem 1]{Unser2011stochastic}
	\begin{equation}
		w = \sum_{n\in \N} a_k \delta ( \cdot - \bm{x}_k),
	\end{equation}
	where $(a_k)$ are i.i.d. with law $\mathcal{P}_{\mathrm{jump}}$ and $(\bm{x}_k)$ are such that the number of $\bm{x}_k$ in any Borelian $B \in \R^d$ is a Poisson random variable with parameter $\lambda \mu(B)$, $\mu$ denoting the Lebesgue measure on $\R^d$. For $\varphi \in \D(\R^d)$, the function $\varphi$ being compactly supported, the generalized random process $w \times \varphi = \sum_{n\in \N} a_n \varphi(\bm{x}_n) \delta( \cdot - \bm{x}_n)$ is almost surely a finite sum of shifted Dirac functions. Hence, it has the Besov regularity of a single Dirac function, which is precisely \eqref{prop:reguPoisson}; see \cite[p. 164]{schmeisser87}. 
\end{proof}

\begin{corollary} \label{coro:local}
Let $0<p,q \leq \infty$, $\tau  \in \R$.
Consider a L\'evy white noise $w$  with indices $\beta_0,\beta_\infty$. 
If 
\begin{equation} \label{eq:localbesovregu}
 	\tau > d\left( \frac{1}{ \max( p, \beta_\infty)} - 1 \right),
\end{equation}
then $w \in B_{p,q}^{-\tau,\mathrm{loc}}(\R^d)$ a.s. 
\end{corollary}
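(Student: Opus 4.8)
The plan is to deduce Corollary \ref{coro:local} from the weighted result Theorem \ref{theo:weighted} together with the Poisson estimate in Lemma \ref{lemma:poisson}, handling separately the case $\beta_0 > 0$ and the degenerate case $\beta_0 = 0$. The bridge between the two regularity notions is Proposition \ref{prop:weightedtolocal}, which tells us that membership in any weighted Besov space $B_{p,q}^{-\tau}(\R^d;-\mu)$ already implies membership in the local space $B_{p,q}^{-\tau,\mathrm{loc}}(\R^d)$, and the weight $\mu$ plays no role in the local statement. So the strategy is: find, for a given admissible $\tau$ satisfying \eqref{eq:localbesovregu}, some weight $\mu$ for which the pair $(\tau,\mu)$ satisfies the hypotheses \eqref{eq:weightedbesovregu} of Theorem \ref{theo:weighted}, and then invoke Proposition \ref{prop:weightedtolocal}.

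First I would treat the case $\beta_0 > 0$. Here the Schwartz condition holds, so $w \in \S'(\R^d)$ and Theorem \ref{theo:weighted} is directly available. The condition on $\tau$ in \eqref{eq:weightedbesovregu} is $\tau > d\bigl(1 - 1/\max(p,\beta_\infty)\bigr)$, which is exactly \eqref{eq:localbesovregu} rewritten (note $d(1/\max(p,\beta_\infty)-1) $ versus $d(1-1/\max(p,\beta_\infty))$ — these match once one accounts for the sign of $\tau$ in the two statements: in the corollary the space is $B_{p,q}^{-\tau,\mathrm{loc}}$ with $\tau > d(1/\max(p,\beta_\infty)-1)$, i.e. the smoothness exponent $-\tau < -d(1/\max(p,\beta_\infty)-1) = d(1 - 1/\max(p,\beta_\infty))$... so in Theorem \ref{theo:weighted}'s notation one would pick $\tau' = -(-\tau) $; concretely one simply chooses the smoothness parameter so that the strict inequality of \eqref{eq:weightedbesovregu} reads identically). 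The only remaining freedom is $\mu$: since $\beta_0 > 0$, the quantity $d/\min(p,\beta_0)$ is a finite real number, so we may simply pick any $\mu > d/\min(p,\beta_0)$. Theorem \ref{theo:weighted} then gives $w \in B_{p,q}^{-\tau}(\R^d;-\mu)$ a.s., and Proposition \ref{prop:weightedtolocal} upgrades this to $w \in B_{p,q}^{-\tau,\mathrm{loc}}(\R^d)$ a.s.

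Next I would treat the case $\beta_0 = 0$. By the L\'evy--It\^o decomposition (as used in the proof of Theorem \ref{theo:weighted}) we can write $w = w_1 + w_2$ with $w_1, w_2$ independent, $w_1$ compound-Poisson, and $w_2$ finite-variance; then $\beta_\infty(w_2) = \beta_\infty$ and $\beta_0(w_2) = 2 > 0$, while $\beta_\infty(w_1) = 0$. For $w_2$ the case $\beta_0 > 0$ already handled applies and gives $w_2 \in B_{p,q}^{-\tau,\mathrm{loc}}(\R^d)$ a.s. under \eqref{eq:localbesovregu} (note $\max(p,\beta_\infty(w_2)) = \max(p,\beta_\infty)$, so the condition is unchanged). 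For $w_1$ we invoke Lemma \ref{lemma:poisson}: it gives $w_1 \in B_{p,q}^{-\tau,\mathrm{loc}}(\R^d)$ a.s. whenever $\tau > d(1-1/p)$. One checks $d(1-1/p) \le d\bigl(1 - 1/\max(p,\beta_\infty)\bigr)$ since $\max(p,\beta_\infty) \ge p$, so the hypothesis \eqref{eq:localbesovregu} is at least as strong as Lemma \ref{lemma:poisson}'s requirement, hence $w_1$ lies in the space as well. Since local Besov spaces are linear, $w = w_1 + w_2 \in B_{p,q}^{-\tau,\mathrm{loc}}(\R^d)$ a.s., completing the proof.

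I do not expect a genuine obstacle here — this is a deduction step assembling earlier results. The only points requiring a little care are bookkeeping ones: matching the sign conventions between the $+\tau$ appearing in Theorem \ref{theo:weighted}'s hypotheses and the $-\tau$ in the corollary's conclusion; verifying the elementary inequality $\max(p,\beta_\infty) \ge p$ so Lemma \ref{lemma:poisson} is applicable under the corollary's hypothesis; and noting that when $\beta_\infty = 2$ the finite-variance piece $w_2$ still falls under the "first case" of Theorem \ref{theo:weighted} because then $\beta_0(w_2) = \beta_\infty(w_2) = 2 \in I_0(w_2) \cap I_\infty(w_2)$. None of these is more than a line of verification.
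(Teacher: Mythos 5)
Your proposal is correct and follows essentially the same route as the paper: for $\beta_0>0$ it combines Theorem \ref{theo:weighted} (with any admissible weight $\mu$, which is available since $\beta_0>0$) and Proposition \ref{prop:weightedtolocal}, and for $\beta_0=0$ it uses the L\'evy--It\^o splitting $w=w_1+w_2$, Theorem \ref{theo:weighted} for the finite-variance part and Lemma \ref{lemma:poisson} for the compound-Poisson part together with $\max(p,\beta_\infty)\geq p$ and linearity, exactly as in the paper's proof. One caveat: the conditions \eqref{eq:localbesovregu} and \eqref{eq:weightedbesovregu} do \emph{not} coincide after accounting for signs, contrary to your parenthetical reconciliation; \eqref{eq:localbesovregu} as printed is a sign typo in the paper, and what your argument (like the paper's own) actually establishes is the corollary under the hypothesis $\tau > d\bigl(1 - 1/\max(p,\beta_\infty)\bigr)$, which is the form consistent with Theorem \ref{theo:weighted} and Lemma \ref{lemma:poisson}.
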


\begin{proof}
	The case $\beta_0 > 0$ is a direct consequence of Theorem \ref{theo:weighted} and Proposition \ref{prop:weightedtolocal}.
	Let assume now that $\beta_0 = 0$. Again, we can split $w$ as $w_1 + w_2$, where $w_1$ is compound-Poisson and $w_2$ is finite-variance. For $w_2$, we can still apply Theorem \ref{theo:weighted}. We can therefore restrict our attention to the case of compound-Poisson noises with $\beta_0 = 0$. But we have seen that the compound-Poisson case---regardless of $\beta_0$--- was covered in Lemma \ref{lemma:poisson}. Since $\beta_\infty = 0$ for compound-Poisson noises, Lemma \ref{lemma:poisson} is consistent with \eqref{eq:localbesovregu}, finishing the proof. 
\end{proof}

%%%%%%%%%%%%%%%%%%%%%%%%%%%%%%%%%%%%%%%%%%%%%%%%%%%%%%%%%%%%%%%%
\section{Discussion and Examples} \label{sec:examples}
%%%%%%%%%%%%%%%%%%%%%%%%%%%%%%%%%%%%%%%%%%%%%%%%%%%%%%%%%%%%%%%%

	\subsection{Discussion and Comparison with Known Results}
	
\paragraph{Sobolev regularity of L\'evy white noises}
It is noteworthy to observe that Sections \ref{sec:sobolev} and \ref{sec:weighted}, while based on very different techniques, give  exactly the same results when applied to Sobolev spaces. 
Indeed, applying Theorem \ref{theo:weighted} with $p=q=2$, we recover exactly \eqref{eq:weightedsobolev} due to the relations $\min(2,\beta_0) = \beta_0$ and $\max(2,\beta_\infty) = 2$. Theorem \ref{theo:weighted} is therefore the generalization of Proposition \ref{prop:sobolevregularity}, from Sobolev to Besov spaces. 

Interestingly, the Sobolev smoothness parameter $\tau$ of a L\'evy white noise does not depend on the noise: The universal  sufficient condition is $\tau > d/2$. Moreover, we conjecture that this condition is also necessary, in the sense that $w \notin W_2^{\tau}(\R^d;\mu)$ with probability $1$ for $\tau \geq d/2$ for any $\mu$ and any white noise $w$. 
The situation is   different when considering Besov smoothness for $p\neq 2$.   

\paragraph{H\"older regularity of L\'evy white noises}
We obtain the H\"older regularity of a white noise by setting $p=q = \infty$ in Theorem \ref{theo:weighted}. Because $\min(\infty, \beta_0) = \beta_0$ and $\max(\infty, \beta_\infty) = \infty$, we deduce Corollary \ref{coro:holderregu}.
\begin{corollary} \label{coro:holderregu}
The L\'evy white noise $w$ with Blumenthal-Getoor indices $\beta_0 >0$ and $\beta_\infty$ is in the weighted H\"older space $H^{-\tau}(\R^d;-\mu)$  if
\begin{equation}
	\mu > d/ \beta_0, \quad \tau > d.
\end{equation}
\end{corollary}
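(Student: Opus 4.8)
The plan is to deduce Corollary \ref{coro:holderregu} as a direct special case of Theorem \ref{theo:weighted}, observing only that the H\"older spaces $H^{s}(\R^d;\mu)$ are, up to the usual identification, exactly the weighted Besov spaces $B_{\infty,\infty}^{s}(\R^d;\mu)$. Concretely, I would recall that for every $s\in\R$ and $\mu\in\R$ one has $H^{s}(\R^d;\mu) = B_{\infty,\infty}^{s}(\R^d;\mu)$ (see e.g. \cite{Triebel2008function,Edmunds2008function}; for non-integer $s>0$ this is the classical H\"older--Zygmund space, and the Zygmund scale extends the definition to all $s\in\R$). Hence membership of $w$ in $H^{-\tau}(\R^d;-\mu)$ is equivalent to membership in $B_{\infty,\infty}^{-\tau}(\R^d;-\mu)$, and it suffices to invoke Theorem \ref{theo:weighted} with the parameter choice $p=q=\infty$.

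First I would set $p=q=\infty$ in the sufficient condition \eqref{eq:weightedbesovregu}. The condition reads $\mu > d/\min(p,\beta_0)$ and $\tau > d\bigl(1 - 1/\max(p,\beta_\infty)\bigr)$. Since $\beta_0\in(0,2]$ we have $\min(\infty,\beta_0)=\beta_0$, so the first condition becomes $\mu > d/\beta_0$. Since $\beta_\infty\in[0,2]$ is finite we have $\max(\infty,\beta_\infty)=\infty$, so $1/\max(\infty,\beta_\infty)=0$ and the second condition becomes $\tau > d$. Theorem \ref{theo:weighted} then gives $w\in B_{\infty,\infty}^{-\tau}(\R^d;-\mu)$ almost surely under precisely these two conditions, and translating back through the identification $B_{\infty,\infty}^{-\tau}(\R^d;-\mu)=H^{-\tau}(\R^d;-\mu)$ yields the claim.

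There is essentially no analytic obstacle here; the only point requiring a word of care is the legitimacy of the limiting case $p=q=\infty$ in Theorem \ref{theo:weighted}. Inspecting its proof, the reduction to $p=q$ via the embedding \eqref{eq:qdominated} and the passage $p<\infty \to p=\infty$ via the embedding \eqref{eq:conditiontau} are both carried out there with strict inequalities on $\tau$, which is exactly why the strictness in \eqref{eq:weightedbesovregu} is not lost; so no separate argument is needed. Thus the "hard part" is merely bookkeeping: confirming that the wavelet characterization \eqref{eq:besovnorm} with the $\sup$-conventions for $p=q=\infty$ coincides with the H\"older--Zygmund norm and that all intermediate embeddings in the proof of Theorem \ref{theo:weighted} remain valid at the endpoint. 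Once that is noted, Corollary \ref{coro:holderregu} follows in one line.
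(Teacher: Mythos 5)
Your argument is correct and is essentially the paper's own proof: the corollary is obtained by specializing Theorem \ref{theo:weighted} to $p=q=\infty$, using $\min(\infty,\beta_0)=\beta_0$ and $\max(\infty,\beta_\infty)=\infty$, together with the standard identification of $B_{\infty,\infty}^{-\tau}(\R^d;-\mu)$ with the weighted H\"older--Zygmund space $H^{-\tau}(\R^d;-\mu)$. Your extra remark on the admissibility of the endpoint $p=q=\infty$ only makes explicit what the paper already handles through the embeddings \eqref{eq:qdominated} and \eqref{eq:conditiontau}.
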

As for the Sobolev regularity, the H\"older regularity of a L\'evy noise  that we obtained is independent of the noise. However, the Gaussian white noise has a local H\"older regularity of $(-\tau)$ for every $\tau > \frac{d}{2}$ \cite{Veraar2010regularity}. It means that our bounds for the regularity are suboptimal for the Gaussian white noise. By contrast with the Gaussian case, we conjecture that the condition $\tau > d$ is optimal for non-Gaussian L\'evy white noises. 

\paragraph{The regularity of L\'evy white noises for general $p$}
Fixing the parameters $p=q>0$, we define
\begin{equation}
\tau_p(w) = \min\{\tau \in \R, w \in B_{p,p}^{-\tau,\mathrm{loc}}(\R^d) \ \mbox{a.s.} \}.
\end{equation}
The quantity $\tau_p(w)$ measures the regularity of the L\'evy white noise $w$ for the $L_p$-(quasi-)norm. 
In Corollary   \ref{coro:local}, we have seen that $\tau_p(w) \leq d\left( \frac{1}{ \max( p, \beta_\infty)} - 1 \right)$, a quantity that does not depend on $\beta_0$. We conjecture that 
\begin{equation} \label{eq:taupw}
\tau_p(w) = d\left( \frac{1}{ \max( p, \beta_\infty)} - 1 \right)
\end{equation}
for non-Gaussian white noises. If this is true, then the quantity $\max(p,\beta_\infty)$ is a measure of the regularity of a L\'evy white noise for the $L_p$-(quasi-)norm. 

We summarize the local results of Corollary \ref{coro:local} with the  diagram of Figure \ref{big:plotbesovgeneralcase}. We use the classical $(1/p,\tau)$-representation, which is most convenient for visualization. We indeed see in  \eqref{eq:taupw} that the parameters $1/p$ and $\tau$ are linked with a linear relation for $p\leq \beta_\infty$. 

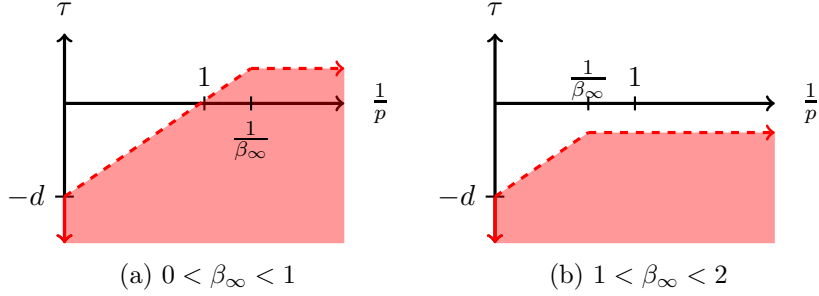
\begin{figure}[t!] 
\centering
%--------------------------------------------------------------------------------------%
\begin{subfigure}[b]{0.35\textwidth}
\begin{tikzpicture}[x=4cm,y=3cm,scale=0.62]

\draw[very thick, ->] (-1,0)--(0.5,0) node[circle,right] {$\frac{1}{p}$} ;
\draw[very thick, <->] (-1,-1)--(-1,0.5) node[circle,above] {$\tau$} ;

\draw[ thick,color=black]
(-1+0.05,-2/3) -- (-1-0.05,-2/3)  node[black,left] { $-d$};

\draw[ thick,color=black]
(-0.25,-0.05) -- (-0.25,0.05)  node[black,above] { $1$};

\fill[red, opacity= 0.4] (-1,-2/3) -- (0,0.25) -- (0.5,0.25) -- (0.5,-1) -- (-1,-1) -- cycle; 
\draw[red, very thick, dashed,->](0,0.25) --(0.5,0.25);
\draw[red, very thick, dashed,- ](-1,-2/3) -- (0,0.25);
\draw[red, very thick,->](-1,-2/3) -- (-1,-1);

\draw[ thick,color=black]
(0,0.05) -- (0,-0.05)  node[black,below] {$\frac{1}{\beta_\infty}$};

\end{tikzpicture}
\caption{$0< \beta_\infty< 1$}
\end{subfigure}
%--------------------------------------------------------------------------------------%
\begin{subfigure}[b]{0.35\textwidth}
\begin{tikzpicture}[x=4cm,y=3cm,scale=0.62]

\draw[very thick, ->] (-1,0)--(0.5,0) node[circle,right] {$\frac{1}{p}$} ;
\draw[very thick, <->] (-1,-1)--(-1,0.5) node[circle,above] {$\tau$} ;

\draw[ thick,color=black]
(-1+0.05,-2/3) -- (-1-0.05,-2/3)  node[black,left] { $-d$};

\draw[ thick,color=black]
(-0.25,-0.05) -- (-0.25,0.05)  node[black,above] { $1$};

\fill[red, opacity= 0.4] (-1,-2/3) -- (-0.5,-5/24) -- (0.5,-5/24) -- (0.5,-1) -- (-1,-1) -- cycle; 
\draw[red, very thick, dashed,->](-0.5,-5/24) -- (0.5,-5/24);
\draw[red, very thick, dashed,- ](-1,-2/3) -- (-0.5,-5/24);
\draw[red, very thick,->](-1,-2/3) -- (-1,-1);

\draw[ thick,color=black]
(-0.5,0.05) -- (-0.5,-0.05)  node[black,above] {$\frac{1}{\beta_\infty}$};

\end{tikzpicture}
\caption{$1< \beta_\infty< 2$}
\end{subfigure}

\caption{Besov localization of general L\'evy white noises. A white noise is almost surely in a given local Besov space $B_{p,q}^{\tau,\mathrm{loc}}(\R^d)$ if $(1/p,\tau)$ is  located in the shaded region.} 
\label{big:plotbesovgeneralcase}
\end{figure}

\paragraph{Weights and L\'evy white noises}
As for the regularity, we can define for $p=q>0$ the optimal weight
\begin{equation}
	\mu_p(w) = \min\{\mu \in \R, \ \exists \tau \in \R, \ w \in B_{p,p}^{-\tau}(\R^d;\mu) \ \mbox{a.s.}  \}.
\end{equation}
According to Theorem \ref{theo:weighted}, we have that $\mu_p(w) \leq \frac{d}{\min(p,\beta_0)}$. We conjecture that 
\begin{equation}
	\mu_p(w) = \frac{d}{\min(p,\beta_0)} 
\end{equation}
for every white noise $w$. 

If this conjecture is true, then $\mu_\infty(w) = 1/\beta_0$. When $\beta_0$ goes to $0$, we need stronger and stronger weights to include the L\'evy white noise into the corresponding H\"older space. The limit case is $\beta_ 0 = 0$ for which we only have local results. Indeed, polynomial weights are not increasing fast enough to compensate the erratic behavior of the white noise. This is consistent with the fact that a L\'evy white noise with $\beta_0 = 0$ is not tempered \cite{Dalang2015Levy}.

	\subsection{Besov Regularity of Some Specific L\'evy White Noises}

We apply our results to specific L\'evy white noises. We start by recalling the Blumenthal-Getoor indices of the considered white noises. We gives in Table \ref{table:noises} the L\'evy exponent and the probability density of the underlying infinitely divisible law, when they can be expressed in a closed form. All the considered distributions are known to be infinitely divisible. For Gaussian, S$\alpha$S, or compound-Poisson noises, this can be easily seen from the definition. For the others, it is a non-trivial fact, and we refer to \cite{Sato1994levy} for more details and references to the adequate literature. 

\begin{table} [t!]
\footnotesize  
\centering
\caption{Blumental-Getoor indices of L\'evy white noises}
\begin{tabular}{lcccccc} 
\hline
\hline 
White noise & parameter & $f(\xi)$ & $p_{\mathrm{id}} (x) $ & $\beta_0$ & $\beta_\infty$ & \emph{cf.}\\
\hline\\[-1ex]
Gaussian & $\sigma^2 >0$ & $- \sigma^2 \xi^2 / 2$ & $\frac{\mathrm{e}^{- x^2 / 2\sigma^2}}{\sqrt{2\pi\sigma^2}} $ & $2$ & $2$ & \cite{Veraar2010regularity} \\
Pure drift & $\mu \in \R$ & $\mathrm{i} \mu \xi$ & $\delta( \cdot - \mu)$ & $1$ & $1$ &   \\
S$\alpha$S  & $\alpha \in (0,2)$  & $-\lvert \xi \rvert^\alpha$ & --- & $\alpha$ & $\alpha$ & \cite{Taqqu1994stable}  \\
Sum of S$\alpha$S & $\alpha,\beta \in (0,2)$ & $-\lvert \xi \rvert^\alpha -\lvert \xi \rvert^\beta$ & --- & $\min(\alpha,\beta)$ & $\max(\alpha,\beta)$ & \cite{Taqqu1994stable}  \\
Laplace & --- & $- \log ( 1 + \xi^2) $ & $\frac{1}{2} \mathrm{e}^{-\lvert x\rvert}$ & $2$ & $0$ & \cite{Koltz2001laplace} \\
Sym-gamma & $c>0$ & $- c \log(1 + \xi^2)$ & ---  & $2$ & $0$ & \cite{Koltz2001laplace} \\
%Student's t &  $\nu > 0$ & --- & $\frac{c_{\nu}}{ (1+x^2)^{(\nu+1)/2}}$ & $\min(\nu,2) $ & ? & \cite{Grigelionis2013student} \\
Poisson & $\lambda >0$ & $\lambda (\mathrm{e}^{\mathrm{i} \xi} -1)$ & --- & $2$ & $0$ &   \\
compound-Poisson  & $\lambda > 0, \mathbb{P}_J$ & $\lambda (\widehat{\mathbb{P}}_J (\xi) - 1)$ &--- & variable   & $0$ & \cite{Unser2014sparse} \\
Inverse Gaussian & --- & --- & $\frac{ \mathrm{e}^{-x}}{\sqrt{2\pi} x^{3/2}} \One_{x\geq 0}$ & $2$ & $1/2$ & \cite{Barndorff1997processes} \\ 
\hline
\hline
\end{tabular} \label{table:noises}
%TODO: la footnote n'apparait pas. 
\end{table}

We moreover remark that any combination of $\beta_0$ and $\beta_\infty$ is possible, as stated in Proposition \ref{prop:allcases}. 

\begin{proposition} \label{prop:allcases}
	For every $\beta_0, \beta_\infty \in [0,2]$, there exists a L\'evy white noise with Blumenthal-Getoor indices $\beta_0$ and $\beta_\infty$. 
\end{proposition}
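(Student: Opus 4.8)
The plan is to construct, for any prescribed pair $(\beta_0,\beta_\infty)\in[0,2]^2$, a L\'evy exponent $f$ whose Blumenthal--Getoor indices (as in Definition \ref{def:BG}) are exactly $\beta_0$ and $\beta_\infty$, and then invoke the Minlos--Bochner theorem (Theorem \ref{theo:MB}) together with the Gelfand--Vilenkin construction to produce the corresponding white noise. The natural building blocks are the stable exponents $\xi\mapsto -\lvert\xi\rvert^\alpha$, which satisfy $f(\xi)/\lvert\xi\rvert^p$ bounded near $0$ iff $p\le\alpha$ and bounded near $\infty$ iff $p\ge\alpha$, hence $\beta_0=\beta_\infty=\alpha$; and the Laplace/symmetric-gamma exponent $\xi\mapsto-\log(1+\xi^2)$, which behaves like $\xi^2$ near $0$ (so $\beta_0=2$) and like $\log\lvert\xi\rvert$ near $\infty$ (so $\beta_\infty=0$).

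First I would dispose of the case $\beta_\infty\le\beta_0$. Here I take $f(\xi)=-\lvert\xi\rvert^{\beta_0}-\lvert\xi\rvert^{\beta_\infty}$ (interpreting $\lvert\xi\rvert^0$ as a term that is dropped, or equivalently using a drift/constant), which is a valid L\'evy exponent as a sum of independent S$\alpha$S contributions. Near $0$ the dominant term is $\lvert\xi\rvert^{\beta_\infty}$ among the two, but what controls $I_0$ is the \emph{largest} power for which $f(\xi)/\lvert\xi\rvert^p$ stays bounded at $0$, namely $p=\min(\beta_0,\beta_\infty)=\beta_\infty$; wait --- one must be careful: $\limsup_{\xi\to0}(\lvert\xi\rvert^{\beta_0}+\lvert\xi\rvert^{\beta_\infty})/\lvert\xi\rvert^p<\infty$ iff $p\le\beta_0$ and $p\le\beta_\infty$, so $\beta_0(f)=\min(\beta_0,\beta_\infty)$, and similarly $\beta_\infty(f)=\max(\beta_0,\beta_\infty)$. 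Thus the pure sum-of-stable construction only realizes pairs with $\beta_0(f)\le\beta_\infty(f)$. So in fact the \emph{easy} regime is $\beta_0\le\beta_\infty$, realized by $f(\xi)=-\lvert\xi\rvert^{\beta_0}-\lvert\xi\rvert^{\beta_\infty}$; I would verify the two $\limsup$ computations directly, as in the proof of Proposition \ref{prop:boundexponent}.

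The harder regime is $\beta_0>\beta_\infty$, where we need a large index at $0$ and a small index at $\infty$ --- the reverse of what a sum of stable terms gives. The key idea is to use the symmetric-gamma-type exponent to push $\beta_0$ up to $2$ while keeping $\beta_\infty$ small. Concretely, for $\beta_0>\beta_\infty$ I would set $f(\xi)=-\lvert\xi\rvert^{\beta_0}\,\One_{?}$... no: better to take $f(\xi) = -\big(\log(1+\lvert\xi\rvert^{2})\big)^{\beta_0/2}$ when $\beta_0<2$? This is delicate. A cleaner route: combine a term controlling behaviour at $0$ with a term controlling behaviour at $\infty$, choosing each so it is \emph{negligible} in the other regime. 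For behaviour at $\infty$ we want something growing like $\lvert\xi\rvert^{\beta_\infty}$ (an S$\beta_\infty$S term, or for $\beta_\infty=0$ a compound-Poisson/bounded term); for behaviour at $0$ we want something that near $0$ looks like $\lvert\xi\rvert^{\beta_0}$ but at $\infty$ grows strictly slower than $\lvert\xi\rvert^{\beta_\infty}$. The symmetric-gamma exponent $-c\log(1+\xi^2)\sim -c\xi^2$ at $0$ and $\sim-2c\log\lvert\xi\rvert$ at $\infty$ handles $\beta_0=2$; for $0<\beta_\infty<\beta_0<2$ one can take a ``tempered'' or truncated stable exponent of the form $f_0(\xi)=-\int_{\R}(1-\cos(\xi t))\nu_0(t)\,\drm t$ with a L\'evy density $\nu_0$ chosen so that $\nu_0(t)\sim \lvert t\rvert^{-1-\beta_0}$ as $t\to0$ (giving index $\beta_0$ at $\infty$ of $f_0$... ) --- here the standard dictionary between the small-$t$ behaviour of the L\'evy measure and the index at $\infty$, versus the large-$t$/tail behaviour and the index at $0$, must be used carefully and in the right direction. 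I expect the main obstacle to be precisely this: realizing $\beta_0>\beta_\infty$ requires a L\'evy measure whose \emph{small-jump} behaviour governs $\beta_\infty$ (mild singularity at $0$, exponent $\beta_\infty$) and whose \emph{integrability/tail} must be arranged so that $f(\xi)\asymp\lvert\xi\rvert^{\beta_0}$ near $\xi=0$ --- which forces a fat tail $\nu(t)\asymp\lvert t\rvert^{-1-\beta_0}$ for large $t$; one then checks that this fat tail does not spoil the Schwartz condition (finiteness of $\mathbb E[\lvert X\rvert^\epsilon]$ for some $\epsilon>0$, which holds as long as $\beta_0>0$) and that the resulting $f$ is a genuine L\'evy exponent via the L\'evy--Khintchine formula. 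Finally, once $f$ is in hand, Theorem \ref{theo:MB} and the discussion following the definition of L\'evy white noise give the existence of $w$ with characteristic functional $\CF_w(\varphi)=\exp(\int_{\R^d}f(\varphi(\bm x))\drm\bm x)$, completing the proof; the degenerate boundary cases $\beta_0=0$ or $\beta_\infty\in\{0,2\}$ are read off from the explicit examples in Table \ref{table:noises} (e.g.\ compound-Poisson for $\beta_\infty=0$, Gaussian for both indices $=2$, pure drift for both $=1$).
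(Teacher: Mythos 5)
Your instinct for the difficult regime $\beta_\infty<\beta_0$ --- prescribe the small-jump singularity of the L\'evy measure (exponent $\beta_\infty$, governing $f$ at infinity) and its tail (exponent $\beta_0$, governing $f$ at the origin) independently --- is exactly the mechanism of the paper's proof, so the key idea is present. But your write-up stops where the work actually is: you never write down an explicit L\'evy measure, you neither prove nor cite the dictionary between $\nu$ and the indices, and you explicitly flag that you are unsure in which direction it goes (``this is delicate'', ``must be used carefully and in the right direction''). The paper closes this in one stroke: for a symmetric pure-jump triplet $(0,0,\nu)$ it quotes the formula \eqref{eq:indiceslevymeasure}, namely $\beta_\infty=\inf\{p\in[0,2]:\int_{|x|\le 1}|x|^p\,\nu(\drm x)<\infty\}$ and $\beta_0=\sup\{p\in[0,2]:\int_{|x|>1}|x|^p\,\nu(\drm x)<\infty\}$, and then simply takes $\nu=\nu_{\beta_0}+\nu^{\beta_\infty}$ with $\nu_{\beta_0}(x)=|x|^{-(\beta_0+1)}\One_{|x|>1}$ and $\nu^{\beta_\infty}(x)=|x|^{-(\beta_\infty+1)}\One_{|x|\le 1}$ (logarithmically corrected at the endpoints $\beta_0=0$, $\beta_\infty=2$), checking only that $\int\inf(1,x^2)\,\nu(\drm x)<\infty$. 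Note that this single construction handles \emph{all} pairs uniformly, so no case split between $\beta_0\le\beta_\infty$ and $\beta_0>\beta_\infty$ is needed. Without that formula, or an equivalent two-sided estimate of $\int_{\R}(1-\cos(\xi t))\,\nu(\drm t)$ as $\xi\to 0$ and $\xi\to\infty$, your treatment of $\beta_\infty<\beta_0<2$ is a plan rather than a proof.

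There is also a concrete error at the boundary $\beta_0=0$ (with $\beta_\infty>0$). Your suggestion to ``drop the $|\xi|^0$ term or use a drift/constant'' fails: a constant is excluded by $f(0)=0$, a drift $\mathrm{i}\mu\xi$ has index $1$ at the origin, and dropping the term leaves $f(\xi)=-|\xi|^{\beta_\infty}$, whose index at zero is $\beta_\infty$, not $0$. Realizing $\beta_0=0$ requires tails of $\nu$ heavier than every polynomial order, e.g.\ the paper's $\nu_0(x)=\log^{-2}(1+|x|)\,|x|^{-1}\One_{|x|>1}$ (equivalently, a compound-Poisson component whose jump law has no finite moment of any positive order, superposed on an S$\beta_\infty$S part); deferring to Table \ref{table:noises} does not help, since no entry there has $\beta_0=0$ together with $\beta_\infty>0$. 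Your easy-regime computation for $0<\beta_0\le\beta_\infty$ via $f(\xi)=-|\xi|^{\beta_0}-|\xi|^{\beta_\infty}$ (including $\beta_\infty=2$ through the Gaussian term) is correct and consistent with Definition \ref{def:BG}, but by itself it covers only part of the square $[0,2]^2$.
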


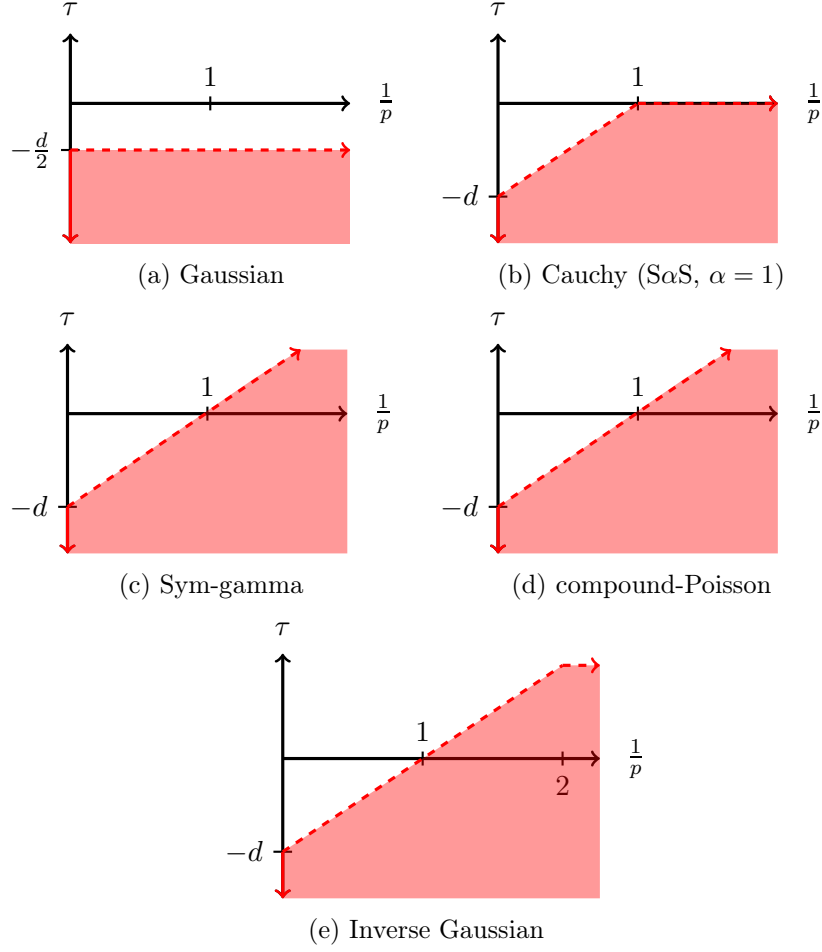
\begin{figure}[t!] 
\centering
%--------------------------------------------------------------------------------------%
\begin{subfigure}[b]{0.35\textwidth}
\begin{tikzpicture}[x=4cm,y=3cm,scale=0.62]

\draw[very thick, ->] (-1,0)--(0.5,0) node[circle,right] {$\frac{1}{p}$} ;
\draw[very thick, <->] (-1,-1)--(-1,0.5) node[circle,above] {$\tau$} ;

\draw[ thick,color=black]
(-1+0.05,-1/3) -- (-1-0.05,-1/3)  node[black,left] { $-\frac{d}{2}$};

\fill[red, opacity= 0.4] (-1,-1) -- (-1,-1/3) -- (0.5,-1/3) -- (0.5,-1) -- cycle; 
\draw[red, very thick,-> ](-1,-1/3) -- (-1,-1);
\draw[red, very thick, dashed,->](-1,-1/3) --(0.5,-1/3);

\draw[ thick,color=black]
(-0.25,-0.05) -- (-0.25,0.05)  node[black,above] { $1$};
\end{tikzpicture}
\caption{Gaussian}
\end{subfigure}
%--------------------------------------------------------------------------------------%
\begin{subfigure}[b]{0.35\textwidth}
\begin{tikzpicture}[x=4cm,y=3cm,scale=0.62]

\draw[very thick, ->] (-1,0)--(0.5,0) node[circle,right] {$\frac{1}{p}$} ;
\draw[very thick, <->] (-1,-1)--(-1,0.5) node[circle,above] {$\tau$} ;

\draw[ thick,color=black]
(-1+0.05,-2/3) -- (-1-0.05,-2/3)  node[black,left] { $-d$};

\draw[ thick,color=black]
(-0.25,-0.05) -- (-0.25,0.05)  node[black,above] { $1$};

\fill[red, opacity= 0.4] (-1,-2/3) -- (-0.25,0) -- (0.5,0) -- (0.5,-1) -- (-1,-1) -- cycle; 
\draw[red, very thick, dashed,->](-0.25,0) -- (0.5,0) ;
\draw[red, very thick, dashed,- ](-1,-2/3) --(-0.25,0);
\draw[red, very thick,->](-1,-2/3) -- (-1,-1);

\end{tikzpicture}
\caption{Cauchy (S$\alpha$S, $\alpha = 1$)}
\end{subfigure}
%--------------------------------------------------------------------------------------%
\begin{subfigure}[b]{0.35\textwidth}
\begin{tikzpicture}[x=4cm,y=3cm,scale=0.62]

\draw[very thick, ->] (-1,0)--(0.5,0) node[circle,right] {$\frac{1}{p}$} ;
\draw[very thick, <->] (-1,-1)--(-1,0.5) node[circle,above] {$\tau$} ;

\draw[ thick,color=black]
(-1+0.05,-2/3) -- (-1-0.05,-2/3)  node[black,left] { $-d$};

\draw[ thick,color=black]
(-0.25,-0.05) -- (-0.25,0.05)  node[black,above] { $1$};

\fill[red, opacity= 0.4] (-1,-2/3) -- (0,0.25) -- (0.25,11/24) -- (0.5,11/24) -- (0.5,-1) -- (-1,-1) -- cycle; 
\draw[red, very thick, dashed,-> ](-1,-2/3) -- (0.25,11/24);
\draw[red, very thick,->](-1,-2/3) -- (-1,-1);

\end{tikzpicture}
\caption{Sym-gamma}
\end{subfigure}
%--------------------------------------------------------------------------------------%
\begin{subfigure}[b]{0.35\textwidth}
\begin{tikzpicture}[x=4cm,y=3cm,scale=0.62]

\draw[very thick, ->] (-1,0)--(0.5,0) node[circle,right] {$\frac{1}{p}$} ;
\draw[very thick, <->] (-1,-1)--(-1,0.5) node[circle,above] {$\tau$} ;

\draw[ thick,color=black]
(-1+0.05,-2/3) -- (-1-0.05,-2/3)  node[black,left] { $-d$};

\draw[ thick,color=black]
(-0.25,-0.05) -- (-0.25,0.05)  node[black,above] { $1$};

\fill[red, opacity= 0.4] (-1,-2/3) -- (0,0.25) -- (0.25,11/24) -- (0.5,11/24) -- (0.5,-1) -- (-1,-1) -- cycle; 
\draw[red, very thick, dashed,-> ](-1,-2/3) -- (0.25,11/24);
\draw[red, very thick,->](-1,-2/3) -- (-1,-1);

\end{tikzpicture}
\caption{compound-Poisson}
\end{subfigure}
%--------------------------------------------------------------------------------------%
\begin{subfigure}[b]{0.35\textwidth}
\begin{tikzpicture}[x=4cm,y=3cm,scale=0.62]

\draw[very thick, ->] (-1,0)--(0.7,0) node[circle,right] {$\frac{1}{p}$} ;
\draw[very thick, <->] (-1,-1)--(-1,0.75) node[circle,above] {$\tau$} ;

\draw[ thick,color=black]
(-1+0.05,-2/3) -- (-1-0.05,-2/3)  node[black,left] { $-d$};

\draw[ thick,color=black]
(-0.25,-0.05) -- (-0.25,0.05)  node[black,above] { $1$};

\draw[ thick,color=black]
(0.5,0.05) -- (0.5,-0.05)  node[black,below] {$2$};

\fill[red, opacity= 0.4] (-1,-2/3) --  (0.5,2/3) -- (0.7,2/3) -- (0.7,-1) -- (-1,-1) -- cycle; 
\draw[red, very thick, dashed,- ](-1,-2/3) --  (0.5,2/3);
\draw[red, very thick, dashed,->](0.5,2/3) --  (0.7,2/3);
\draw[red, very thick,->](-1,-2/3) -- (-1,-1);

\end{tikzpicture}
\caption{Inverse Gaussian}
\end{subfigure}

\caption{Besov localization of specific L\'evy white noises. A white noise is almost surely in a given Besov space $B_{p,q}^{\tau,\mathrm{loc}}(\R^d)$ if $(1/p,\tau)$ is  located in the shaded region.} 
\label{fig:plotbesov}
\end{figure}
%TODO: peut-on les mettre en seulement trois lignes? 

\begin{proof}
We first recall that a L\'evy exponent can be uniquely represents by its L\'evy triplet $(\mu,\sigma^2,\nu)$ as \cite[Theorem 8.1]{Sato1994levy}
\begin{equation}
	f(\xi) = \mathrm{i} \mu \xi - \frac{\sigma^2 \xi^2}{2} + \int_{\Rstar} (\mathrm{e}^{\mathrm{i} \xi x} - 1 - \mathrm{i}\xi x \One_{\lvert x \rvert \leq 1}) \nu(\drm x),
\end{equation}
where $\mu \in \R$, $\sigma^2 \geq 0$, and $\nu$ is a L\'evy measure, that is, a measure on $\Rstar$ such that $\int_{\Rstar} \inf(1,x^2)  \nu(\drm x)< \infty$. When $\mu = \sigma^2 = 0$ and $\nu$ is symmetric, the L\'evy white noise with L\'evy triplet $(0,0,\nu)$ has Blumenthal-Getoor indices given by \cite[Section 3.1]{Deng2015shift}
\begin{align} \label{eq:indiceslevymeasure}
	\beta_\infty &= \inf_{p \in [0,2]}  \left\{ \int_{\lvert x \rvert \leq 1} \lvert x \rvert^p \nu(\drm x)< \infty \right\} , \nonumber \\
	\beta_0 &= \sup_{p \in [0,2]}  \left\{ \int_{\lvert x \rvert > 1} \lvert x \rvert^p \nu(\drm x) < \infty  \right\}.
\end{align}
For $0<\beta_0 \leq 2$ and $0 \leq \beta_\infty<2$, we set $$\nu_{\beta_0} (x) =   \lvert x \rvert^{-(\beta_0 +1)} \One_{\lvert x \rvert>1}  , \quad \nu^{\beta_\infty}(  x) = \ \lvert x \rvert^{-(\beta_\infty +1)} \One_{\lvert x \rvert \leq 1} .$$ 
Moreover, for $\beta_0 = 0$ and $\beta_\infty = 2$, we set $$\nu_{0} (x) = \log^{-2} (1 + \lvert x \rvert ) \lvert x \rvert^{-1} \One_{\lvert x \rvert>1}, \quad \nu^2 (x) = \log^{-1/2} (1 + \lvert x \rvert)  \lvert x \rvert^{-3}  \One_{\lvert x \rvert \leq 1}.$$
For $0\leq \beta_0 , \beta_ \infty \leq 2$ and defining $\nu_{\beta_0}^{\beta_\infty} = \nu_{\beta_0}  + \nu^{\beta_\infty}$, we see easily that $$\int_{\Rstar} \inf(1,x^2) \nu_{\beta_0}^{\beta_\infty}(\drm x)  < \infty,$$ so that $\nu_{\beta_0}^{\beta_\infty}$ is a L\'evy measure. Based on  \eqref{eq:indiceslevymeasure}, we also see  that  the associated indices are $\beta_0$ and $\beta_\infty$.
\end{proof}

	\subsection{Conclusion}
We derived new results on the Besov regularity of general multidimensional L\'evy white noises. We have shown that the local regularity depends on the Blumenthal-Getoor index $\beta_\infty$ which measures the behavior of the L\'evy exponent at infinity. The global regularity over the complete space $\R^d$ requires the introduction of polynomial weights. We have shown that the second Blumenthal-Getoor index $\beta_0$ characterizes the weight parameter. Finally, we applied our results to specific white noises. 

\section*{Acknowledgment}
The authors are grateful to John Paul Ward for the fruitful preliminary discussions that lead to this work. The research leading to these results has received funding from the European Research Council under the European Union's Seventh Framework Programme (FP7/2007-2013) / ERC grant agreement $\text{n}^\circ$ 267439.

\bibliographystyle{plain}
\bibliography{references}

\end{document}